\newtheorem{theorem}{Theorem}
\numberwithin{theorem}{section}
\newtheorem{lemma}{Lemma}
\newtheorem{corollary}{Corollary}
\newtheorem{remark}[theorem]{Remark}
\newtheorem{assumption}[theorem]{Assumption}
\definecolor{refkey}{rgb}{0.9451,0.2706,0.4941}\definecolor{labelkey}{rgb}{0.9451,0.2706,0.4941}
\definecolor{darkred}{RGB}{139,0,0}
\definecolor{darkgreen}{RGB}{0,100,0}
\definecolor{darkmagenta}{RGB}{139,0,139}
\newcommand{\bsx}{{\boldsymbol{x}}}
\newcommand{\bsm}{{\boldsymbol{m}}}
\newcommand{\bsy}{{\boldsymbol{y}}}
\newcommand{\bsnu}{{\boldsymbol{\nu}}}
\newcommand{\bsb}{{\boldsymbol{b}}}
\newcommand{\bsk}{{\boldsymbol{k}}}
\newcommand{\calA}{\mathcal{A}}
\newcommand{\calB}{\mathcal{B}}
\newcommand{\cJ}{\mathcal{J}}
\newcommand{\calF}{\mathcal{F}}
\newcommand{\calQ}{\mathcal{Q}}
\newcommand{\calX}{\mathcal{X}}
\newcommand{\calY}{\mathcal{Y}}
\newcommand{\calZ}{\mathcal{Z}}
\newcommand{\frakJ}{\mathfrak{J}}
\newcommand{\E}{\mathbb{E}}
\newcommand{\N}{\mathbb{N}}
\newcommand{\R}{\mathbb{R}}
\newcommand{\bsrho}{\boldsymbol{\rho}}
\newcommand{\Cov}{\mathrm{Cov}}
\DeclareMathOperator{\Tr}{Tr}
\DeclareMathOperator{\diag}{diag}
\DeclareMathOperator*{\argmin}{arg\,min}
\newcommand{\rev}[1]{\textcolor{black}{#1}}
\title{Application of dimension truncation error analysis to high-dimensional function approximation}
\title{One-shot Learning of Surrogates in PDE-constrained Optimization Under Uncertainty}
\author{Philipp A. Guth\thanks{Johann Radon Institute for Computational and Applied Mathematics, Austrian Academy of Sciences, Altenbergerstra{\ss}e 69, 4040 Linz, Austria
	({\tt philipp.guth@ricam.oeaw.ac.at}).}
\and Claudia Schillings\thanks{Fachbereich Mathematik und Informatik, Freie Universität Berlin, Arnimallee 6, 14195 Berlin, Germany 
	({\tt c.schillings@fu-berlin.de}).}
\and Simon Weissmann\thanks{Institute of Mathematics, University of Mannheim, 68138 Mannheim, Germany
  ({\tt simon.weissmann@uni-mannheim.de}).}}
\begin{document}
\maketitle

\renewcommand{\thefootnote}{\arabic{footnote}}

\begin{abstract}
We propose a general framework for machine learning based optimization under uncertainty. Our approach replaces the complex forward model by a surrogate, which is learned simultaneously in a one-shot sense when solving the optimal control problem. Our approach relies on a reformulation of the problem as a penalized empirical risk minimization problem for which we provide a consistency analysis in terms of large data and increasing penalty parameter. To solve the resulting problem, we suggest a stochastic gradient method with adaptive control of the penalty parameter and prove convergence under suitable assumptions on the surrogate model. Numerical experiments illustrate the results for linear and nonlinear surrogate models.
\end{abstract}
\noindent {\footnotesize
	{\bf Keywords.} {Surrogate Learning, Optimization under Uncertainty, Uncertainty Quantification, Stochastic Gradient Descent, PDE-constrained Risk Minimization}
}


\section{Introduction}

Uncertainties have the potential to render worthless even highly sophisticated methods for large-scale inverse and optimal control problems, since their conclusions do not realize in practice due to imperfect knowledge about model correctness, data relevance, and numerous other factors that influence the resulting solutions. To quantify the uncertainty, we consider risk measures ensuring the robustness of the solutions, see \cite{Kouri2018} and the references therein. For complex processes, the incorporation of these uncertainties typically results in high or even infinite dimensional problems in terms of the uncertain parameters as well as the optimization variables, which in many cases are not solvable with current state of the art methods. One promising potential remedy to this issue lies in the approximation of the \rev{underlying model} using novel techniques arising in uncertainty quantification and machine learning. In this paper, we propose a general framework for incorporating surrogate models in optimization under uncertainty. The underlying optimal control problem is given by a partial differential equation (PDE) constrained optimization problem, where the uncertain input coefficients of the PDE are modeled as a random field. In general, we are interested in the situation, where a high resolution is needed for accurate approximations of the PDE. Hence, introducing an empirical approximation of the risk measure is attached with high computational costs as for each data point we need to solve the underlying PDE model.

Our framework is based on one-shot optimization approaches \cite{volker}, where we reformulate the constrained optimization problem as an unconstrained one via a penalization method. In a recent work \cite{GSW2020}, we have introduced neural network based one-shot approach for inverse problems, where the starting point of the formulation has been mainly based on the all-at-once approach for inverse problems \cite{K16,K17} and physics-informed neural networks \cite{RPK17,RPK17_2,RPK19,YMK20}. We established the connection between the Bayesian approach and the one-shot formulation allowing to interpret the penalization parameter as the level of model error \rev{of the underlying system}.

In the setting of this article, we consider the optimization \rev{with respect }to the control function and the corresponding optimal PDE solution in a one-shot fashion. In order to force the feasibility \rev{with respect }to the constraints, we include a penalty parameter allowing for more and more weight on the penalization term. From a Bayesian perspective, this parameter controls the model error, i.e., increasing the penalization parameter corresponds to vanishing model noise. This setting allows us to incorporate surrogate models straightforwardly. We replace the optimization \rev{with respect }to the infinite dimensional PDE solution by a parameterized family of functions, where the resulting optimization task is \rev{with respect }to the parameters describing the surrogate models. Examples of surrogate models include polynomial series representations, neural networks, Gaussian process approximations and low rank approximations. We discuss various choices in Section~\ref{sec:surrogates}. However, please note that the suggested approach is not limited to the surrogates discussed here.

\subsection{Literature Overview}
Introducing uncertainties in optimization and control problems governed by PDEs leads
to highly complex optimization tasks. The proper problem formulation, the well-posedness and the development of efficient algorithms have received increasing attention in the past, see e.g., \cite{doi:10.1137/16M106306X,AUH,cite-key,BSSW,CVG, GKKSS21,KHRB,Kouri2018,doi:10.1137/140954556, Drew2018,doi:10.1137/19M1263121,vanBarelVandewalle}. 
Despite the growing interest and recent advances in PDE-constrained optimization under uncertainty, including  uncertainty  in form of random parameters or fields is still not feasible for many PDE models due to the significant increase in the complexity of the resulting optimization or control problems. The use of surrogate models, i.e., the replacement of the expensive forward model by approximations which are usually cheap to evaluate, is a promising direction in order to reduce the overall computational effort. However, the surrogates need to be trained or calibrated in advance. In particular, in the optimization under uncertainty setting, a surrogate is needed for every feasible control. One promising remedy to this issue lies in one-shot approaches, see e.g., \cite{volker} and \cite{GSW2020, SCHILLINGS201178}. In \cite{DBLP:journals/simods/GuntherRSCG20}, one-shot ideas have been successfully generalized for the training of residual neural networks. In addition, recent results on the convergence and error analysis of machine learning techniques for high-dimensional, complex systems open up the perspective to develop novel surrogates for optimization under uncertainty that admit a mathematically rigorous convergence analysis and are applicable to a large class of computationally intense, real-world problems. Neural networks (NN) have been successfully applied to various classes of PDEs, cp.~e.g., \cite{BHKS2021,Gitta1, Han8505,Gitta2,OPS19,SZ19,Y17} and also as approximation to the underlying model \cite{dong2020optimization,LJK2019}. For parametric PDEs, generalized polynomial chaos expansion haven been extensively studied, cp.~\cite{CohenDeVore15} for an overview on approximation results. Recently, Gaussian processes haven been suggested for solving general nonlinear PDEs \cite{chen2021solving}. Here, we propose a general framework, which allows 
to include 
surrogate models in a one-shot approach.

\subsection{Outline of the paper}
In this paper, we are going to analyze the dependence of the optimization error on the number of data points as well as on the weight on the penalization. \rev{More precisely, in Section 5, we connect a constrained risk minimization problem (cRM) and the penalized risk minimization (pRM) problem using a quadratic penalty approach. Finally, the pRM is compared to a penalized empirical risk minimization problem (pERM).} Furthermore, we propose a stochastic gradient descent method in order to implement the resulting empirical risk minimization problem efficiently. In this article, we make the following contributions:
\begin{itemize}
    \item We formulate a penalized empirical risk minimization problem and provide a consistency result in terms of large data limit as well as increasing penalty parameters. To be more precise, we can split the error~\rev{into}~an error term decreasing with~\rev{the}~number of data points but depending on the penalty parameter~\rev{and}~an error term decreasing~\rev{with}~the strength of~\rev{the}~penalization independently of the number of data points. Finally, if we increase the penalty parameter for~\rev{an}~increasing number of data points, we are able to derive a rate of convergence for the mean squared error.~\rev{The assumptions we shall make for our consistency analysis can be verified for linear surrogate models.}
    \item We formulate a stochastic gradient descent method in order to solve the penalized risk minimization problem, where we allow an adaptive increase of the penalty parameter avoiding numerical instabilities due to high variance. Under suitable assumptions we prove convergence of the proposed stochastic gradient descent method. These assumptions can be verified for linear surrogate models.~\rev{Furthermore, we provide an extension to a non-convex setting, where we prove convergence of the gradients under smoothness assumption.}
    \item We test our proposed approach numerically, where we apply a linear as well as a nonlinear surrogate model. The linear surrogate model is based on a polynomial expansion, while the nonlinear surrogate model is described as a neural network.
\end{itemize}

The remainder of this article is structured as follows. In Section~\ref{sec:parametricOperatorEqns} we recall results for parametric operator equations that are important for our problem formulation.
In Section~\ref{sec:mathset} we 
formulate the underlying setting of optimization under uncertainty in reduced and all-at-once formulation. In Section~\ref{sec:surrogates} we present a collection of surrogates for the parametric mapping. A detailed consistency analysis \rev{with respect }to number of data points and penalty weight is presented in Section~\ref{sec:consistency}. We formulate and analyse the penalized stochastic gradient descent method in Section~\ref{sec:SGD}. In Section~\ref{sec:numerics} we illustrate numerically the performance of our proposed framework. We conclude the paper with a brief summary and a discussion about further directions of interest in Section~\ref{sec:conclusions}.

\section{Parametric linear operator equations}
\label{sec:parametricOperatorEqns}
Many PDE problems that are subject to uncertainty can be stated as parametric operator equation. To establish the connection between the reduced formulation and the one-shot formulation of the optimization problem under uncertainty, we recall selected results for parametric operator equations.
Let $V$ and $W$ be Banach spaces over $\R$. By $W'$ we denote the topological dual space of $W$, i.e., the space of bounded linear maps from $W$ to $\R$. Moreover, let $\mathcal{L}(V,W')$ denote the Banach space of bounded linear operators from $V$ to $W'$.
Let $U$ be a nonempty topological space. A parametric linear operator from $V$ to $W'$ with parameter domain $U$ is a continuous map
\begin{align*}
    A: U \to \mathcal{L}(V,W')\,,\quad \bsy \mapsto A(\bsy)\,.
\end{align*}
For a given $z \in W'$, we are interested in finding $u: U \to V$ such that
\begin{align}\label{parametric_op}
    A(\bsy)u(\bsy) = z\,,\quad \forall \bsy \in U\,.
\end{align}
Note that we assume here that $z$ is constant in $\bsy$, i.e., $\bsy \mapsto z(\bsy) = z$ for all $\bsy \in U$.

\begin{theorem}[Theorem 1.1.1 and Lemma 1.1.3 in \cite{Gittelson2011}]
    Let $A(\bsy)$ be bijective for all $\bsy \in U$. Then \eqref{parametric_op} has a unique solution $u:U \to V$. The solution $\bsy \mapsto u(\bsy)$ is continuous if $\bsy \mapsto z(\bsy)$ is continuous. 
    Moreover, if $U$ is a compact Hausdorff space, then there exists $a_{\min},a_{\max} >0$ such that
    \begin{align*}
        \|A(\bsy)\|_{\mathcal{L}(V,W')} \leq a_{\max}\,\quad\text{and}\quad \|A(\bsy)^{-1}\|_{\mathcal{L}(W',V)} \leq 1/a_{\min}\,\quad \forall \bsy \in U\,.
    \end{align*}
\end{theorem}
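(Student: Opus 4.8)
The plan is to handle the existence-and-uniqueness claim pointwise in $\bsy$ via the bounded inverse theorem, and then to upgrade everything to continuity (and, on compact $U$, to uniform bounds) using the fact that $A(\cdot)$ is continuous by definition together with the stability of operator inversion under perturbation.

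First I would fix $\bsy \in U$. Since $A(\bsy) \in \mathcal{L}(V,W')$ is a bounded linear bijection between Banach spaces, the bounded inverse theorem (a consequence of the open mapping theorem) gives $A(\bsy)^{-1} \in \mathcal{L}(W',V)$. Hence $u(\bsy) := A(\bsy)^{-1}z$ is well defined and solves \eqref{parametric_op}, and injectivity of $A(\bsy)$ makes it the unique solution; this settles the first assertion.

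Next, for the continuity of $\bsy \mapsto u(\bsy)$, I would first record the standard fact that the set $\mathrm{GL}(V,W') \subset \mathcal{L}(V,W')$ of bounded bijections is open and that $T \mapsto T^{-1}$ is continuous on it: for $T_0 \in \mathrm{GL}(V,W')$ and $T$ with $\|T-T_0\|_{\mathcal{L}(V,W')}\,\|T_0^{-1}\|_{\mathcal{L}(W',V)} < 1$, write $T = T_0\bigl(I_V + T_0^{-1}(T-T_0)\bigr)$, note the bracketed factor lies in $\mathcal{L}(V,V)$ and is invertible by a Neumann series, so $T$ is invertible with $T^{-1} = \bigl(I_V + T_0^{-1}(T-T_0)\bigr)^{-1}T_0^{-1}$, and the Neumann estimate yields $\|T^{-1}-T_0^{-1}\|_{\mathcal{L}(W',V)} \leq \|T_0^{-1}\|^2\|T-T_0\| / \bigl(1-\|T_0^{-1}\|\|T-T_0\|\bigr) \to 0$. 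Composing with the continuous map $A$ (which by hypothesis takes values in $\mathrm{GL}(V,W')$) shows $\bsy \mapsto A(\bsy)^{-1}$ is continuous from $U$ into $\mathcal{L}(W',V)$. Since the application map $\mathcal{L}(W',V)\times W' \to V$, $(B,w)\mapsto Bw$, is continuous (bilinear and bounded) and $\bsy \mapsto z(\bsy)$ is assumed continuous, the composition $\bsy \mapsto u(\bsy) = A(\bsy)^{-1}z(\bsy)$ is continuous. Finally, if $U$ is compact Hausdorff, then $\bsy \mapsto \|A(\bsy)\|_{\mathcal{L}(V,W')}$ is continuous (composition of the continuous $A$ with the norm) and hence attains a finite maximum $a_{\max}$ on $U$, and likewise $\bsy \mapsto \|A(\bsy)^{-1}\|_{\mathcal{L}(W',V)}$ is continuous by the previous step and attains a finite maximum, which we name $1/a_{\min}$; both maxima are strictly positive because (outside the trivial case $V=\{0\}$) a bijection is a nonzero operator, so $a_{\min},a_{\max} > 0$.

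The main obstacle is the middle step: showing that bijectivity is preserved under small perturbations and that inversion $T \mapsto T^{-1}$ is continuous on $\mathrm{GL}(V,W')$. Everything else is either the open mapping theorem applied at a fixed parameter or the elementary fact that a continuous real-valued function on a compact space attains its bounds; it is the Neumann-series perturbation analysis of the inverse that carries the real content, and it is also precisely what lets compactness of $U$ deliver the uniform estimates.
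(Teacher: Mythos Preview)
Your argument is correct and is essentially the standard proof one would expect; note that the paper itself does not give a proof here but merely cites the result from \cite{Gittelson2011}. The pointwise bounded inverse theorem, the Neumann-series stability of inversion, and extremum-on-compacta are exactly the ingredients the cited reference uses, so there is nothing to add.
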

\rev{In the following we shall hence work under the assumption that $U$ is a compact Hausdorff space.}

Let $\rev{\mathcal{C}(U,V)}$ denote the Banach space of continuous maps $U \to V$ with norm $\|v\|_{\mathcal{C}(U,V)} := \sup_{\bsy \in U} \|v\rev{(\bsy)}\|_{V}$. Then, the operators
\begin{eqnarray}
&\mathcal{A}:& \mathcal{C}(U,V) \to \mathcal{C}(U,W'), \quad v\mapsto [\bsy \mapsto \rev{A(\bsy) v(\bsy)}] \label{eq:contOperator}\\
&\mathcal{A}^{-1}:& \mathcal{C}(U,W') \to \mathcal{C}(U,V), \quad w\mapsto [\bsy \mapsto \rev{A(\bsy)^{-1} w(\bsy)}]
\end{eqnarray}
are well-defined {and} inverse to \rev{each other} with norms $\|\mathcal{A}\|\leq a_{\max}$ and $\|\mathcal{A}^{-1}\|\leq 1/a_{\min}$. This result can be extended to Lebesgue spaces of vector-valued functions. To this end, let $\mathfrak{B}(U)$ be the Borel $\sigma$-algebra on $U$, and let $\mu$ be a finite measure on $(U, \mathfrak{B}(U))$.
\begin{theorem}[Theorem 1.1.6 in \cite{Gittelson2011}]
For all $1 \leq p < \infty$, the operator $\mathcal{A}$ in \eqref{eq:contOperator} extends uniquely to a boundedly invertible operator on the Lebesgue--Bochner spaces
\begin{equation}
\mathcal{A} : L^p_\mu(U,V) \to L^p_\mu(U,W')\,.
\end{equation}
The norms of $\mathcal{A}$ and $\mathcal{A}^{-1}$ are bounded by $1/a_{\min}$ and $a_{\max}$, respectively.
\end{theorem}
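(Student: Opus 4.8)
The plan is to define the extension by the obvious pointwise formula and then check, in turn, that it is well defined, bounded, invertible, and the unique extension. For $v \in L^p_\mu(U,V)$ put $(\mathcal{A}v)(\bsy) := A(\bsy)\,v(\bsy)$. First I would verify that $\bsy \mapsto A(\bsy)v(\bsy)$ is a strongly $\mu$-measurable $W'$-valued function: approximate $v$ $\mu$-a.e.\ by $V$-valued simple functions $v_n$; for a constant $c \in V$ the map $\bsy \mapsto A(\bsy)c$ is continuous — hence Borel measurable — by continuity of $A: U \to \mathcal{L}(V,W')$ and of the evaluation $\mathcal{L}(V,W') \times V \to W'$, so each $\bsy \mapsto A(\bsy)v_n(\bsy)$ is a measurable simple function and the pointwise limit $\bsy \mapsto A(\bsy)v(\bsy)$ is measurable. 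The norm bound from the first theorem, $\|A(\bsy)\|_{\mathcal{L}(V,W')} \le a_{\max}$ for all $\bsy$, then gives the pointwise estimate $\|A(\bsy)v(\bsy)\|_{W'} \le a_{\max}\|v(\bsy)\|_V$, so that
\[
\|\mathcal{A}v\|_{L^p_\mu(U,W')}^p = \int_U \|A(\bsy)v(\bsy)\|_{W'}^p \,\rd\mu(\bsy) \le a_{\max}^p \int_U \|v(\bsy)\|_V^p \,\rd\mu(\bsy) = a_{\max}^p \,\|v\|_{L^p_\mu(U,V)}^p\,.
\]
In particular $\mathcal{A}$ maps $L^p_\mu(U,V)$ into $L^p_\mu(U,W')$ and is bounded with $\|\mathcal{A}\| \le a_{\max}$.

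Second, I would construct the inverse by the same recipe: define $\mathcal{B}: L^p_\mu(U,W') \to L^p_\mu(U,V)$ by $(\mathcal{B}w)(\bsy) := A(\bsy)^{-1} w(\bsy)$. The identical argument, now using $\|A(\bsy)^{-1}\|_{\mathcal{L}(W',V)} \le 1/a_{\min}$ for all $\bsy$, shows $\mathcal{B}$ is well defined and bounded with $\|\mathcal{B}\| \le 1/a_{\min}$. Since $A(\bsy)$ and $A(\bsy)^{-1}$ are mutually inverse for every $\bsy$, we have $(\mathcal{B}\mathcal{A}v)(\bsy) = v(\bsy)$ and $(\mathcal{A}\mathcal{B}w)(\bsy) = w(\bsy)$ for $\mu$-a.e.\ $\bsy$, i.e.\ $\mathcal{B}\mathcal{A} = \mathrm{id}_{L^p_\mu(U,V)}$ and $\mathcal{A}\mathcal{B} = \mathrm{id}_{L^p_\mu(U,W')}$ as operators. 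Hence $\mathcal{A}$ is boundedly invertible with $\mathcal{A}^{-1} = \mathcal{B}$, so $\|\mathcal{A}^{-1}\| \le 1/a_{\min}$.

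Finally, for uniqueness of the extension I would invoke the density of $\mathcal{C}(U,V)$ in $L^p_\mu(U,V)$ (using that $\mu$ is finite and that, on the compact Hausdorff parameter space, continuous functions approximate $L^p$-functions), observe that the operator just defined restricts on $\mathcal{C}(U,V)$ to the operator $\mathcal{A}$ of \eqref{eq:contOperator}, and use that a bounded operator is determined by its action on a dense subspace — this also transfers the norm bound from the continuous setting. I expect the only genuinely delicate points to be the strong measurability of $\bsy \mapsto A(\bsy)v(\bsy)$ and this density statement, which is precisely where the topological and measure-theoretic hypotheses on $(U,\mathfrak{B}(U),\mu)$ are used; boundedness and invertibility are then routine consequences of the pointwise operator-norm bounds.
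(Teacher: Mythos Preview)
The paper does not supply its own proof of this statement: it is quoted verbatim as Theorem~1.1.6 from \cite{Gittelson2011} and used as a black box. Your argument is the standard one --- pointwise definition, strong measurability via simple-function approximation, the uniform operator-norm bounds to control the $L^p$ norm, the same construction for the inverse, and uniqueness by density of $\mathcal{C}(U,V)$ in $L^p_\mu(U,V)$ --- and it is correct. This is essentially what one finds in the cited reference.

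One minor remark: the norm bounds you obtain, $\|\mathcal{A}\|\le a_{\max}$ and $\|\mathcal{A}^{-1}\|\le 1/a_{\min}$, agree with the paragraph preceding the theorem in the paper but are swapped relative to the sentence in the theorem statement itself (``bounded by $1/a_{\min}$ and $a_{\max}$, respectively''). Your bounds are the correct ones; the theorem statement in the paper appears to contain a typographical transposition.
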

The norm of the Lebesgue--Bochner space $L^p_{\mu}(U,V)$ is given as $\|\cdot\|_{L^p_{\mu}(U,V)} := \big(\int_U\|\cdot\|_V^p\,\rev{\mu(\mathrm d \bsy)}\big)^{1/p}$.
\begin{corollary}[Corollary 1.1.8 in \cite{Gittelson2011}]\label{coro:1.1.8}
Let $V$ and $W$ be separable spaces, $1 \leq p < \infty$, and let $q$ be the H\"older conjugate of $p$. If $z \in L^p_\mu(U,W')$, then there is a unique $\tilde{u} \in L^p_\mu(U,V)$ such that
\begin{equation}\label{eq:weakparameter}
\int_U \langle A(\bsy) \tilde{u}(\bsy), w(\bsy) \rangle_{W',W}\,\rev{\mu(\mathrm d\bsy)} = \int_U \langle z,w(\bsy)\rangle_{W',W}\,\rev{\mu(\mathrm d\bsy)}\,,\quad \forall\, w(\bsy) \in L^q_\mu(U,W)\,,
\end{equation}
where $\langle\cdot,\cdot\rangle_{W',W}$ denotes the duality pairing of $W$ with its dual $W'$. Moreover, the solution $u\rev{(\bsy)}$ of \eqref{parametric_op} is a version of $\tilde{u}\rev{(\bsy)}$.
\end{corollary}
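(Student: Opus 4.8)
The plan is to read off the solution from the boundedly invertible extension $\mathcal{A}: L^p_\mu(U,V)\to L^p_\mu(U,W')$ supplied by Theorem~1.1.6 in \cite{Gittelson2011}, and then to match it with the variational identity \eqref{eq:weakparameter} via a density argument. First I would observe that, because $\mu$ is finite, the constant map $\bsy\mapsto z$ indeed belongs to $L^p_\mu(U,W')$ (and every step below works verbatim for an arbitrary $z\in L^p_\mu(U,W')$), and then set $\tilde u:=\mathcal{A}^{-1}z\in L^p_\mu(U,V)$, so that $A(\bsy)\tilde u(\bsy)=z$ for $\mu$-a.e.\ $\bsy$. To see that $\tilde u$ solves \eqref{eq:weakparameter}, fix $w\in L^q_\mu(U,W)$: the scalar map $\bsy\mapsto\langle A(\bsy)\tilde u(\bsy),w(\bsy)\rangle_{W',W}$ is measurable, and H\"older's inequality bounds its integral by $\|\mathcal{A}\tilde u\|_{L^p_\mu(U,W')}\|w\|_{L^q_\mu(U,W)}<\infty$, so both integrals in \eqref{eq:weakparameter} are finite and, the integrands agreeing $\mu$-a.e., the identity holds.

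For uniqueness I would take two solutions $\tilde u_1,\tilde u_2\in L^p_\mu(U,V)$ of \eqref{eq:weakparameter}, put $g:=\mathcal{A}(\tilde u_1-\tilde u_2)\in L^p_\mu(U,W')$, and note that $\int_U\langle g(\bsy),w(\bsy)\rangle_{W',W}\,\rd\mu(\bsy)=0$ for every $w\in L^q_\mu(U,W)$; the aim is to deduce $g=0$ and then use injectivity of $\mathcal{A}$. Here I would fix a countable dense subset $\{\eta_n\}_{n\in\bbN}\subset W$ (this is where separability of $W$ is used), observe that $\mathbbm{1}_E\,\eta_n\in L^q_\mu(U,W)$ for each Borel set $E\subseteq U$ since $\mu$ is finite, and conclude $\int_E\langle g(\bsy),\eta_n\rangle_{W',W}\,\rd\mu(\bsy)=0$ for all such $E$; as $\bsy\mapsto\langle g(\bsy),\eta_n\rangle_{W',W}$ is $\mu$-integrable it vanishes $\mu$-a.e. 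Intersecting the full-measure sets over the countably many $n$ and using density of $\{\eta_n\}$ together with continuity of $g(\bsy)\in W'$ gives $g(\bsy)=0$ for $\mu$-a.e.\ $\bsy$, hence $\tilde u_1=\tilde u_2$ in $L^p_\mu(U,V)$.

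It remains to identify $\tilde u$ with the solution of \eqref{parametric_op}. When $z$ is the continuous constant map, Theorem~1.1.1 in \cite{Gittelson2011} provides a continuous $u$ with $A(\bsy)u(\bsy)=z$ for every $\bsy$; since $\mu$ is finite we have $u\in\mathcal{C}(U,V)\subset L^p_\mu(U,V)$ and $\mathcal{A}u=z$ in $L^p_\mu(U,W')$, so the existence computation above shows $u$ satisfies \eqref{eq:weakparameter}, and the uniqueness just proved forces $u(\bsy)=\tilde u(\bsy)$ for $\mu$-a.e.\ $\bsy$; that is, $u$ is a representative of the equivalence class $\tilde u$. (For general $z\in L^p_\mu(U,W')$ the same conclusion holds with $u$ replaced by the pointwise-defined map $\bsy\mapsto A(\bsy)^{-1}z(\bsy)$.)

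The step I expect to be the crux is the uniqueness argument, namely passing from ``the pairing against every $w\in L^q_\mu(U,W)$ vanishes'' to pointwise-a.e.\ vanishing of $g\in L^p_\mu(U,W')$. This is exactly where separability of $W$ is indispensable, since it reduces the condition to the countable family of scalar identities $\int_E\langle g(\bsy),\eta_n\rangle_{W',W}\,\rd\mu(\bsy)=0$; separability of $V$ plays the complementary background role of ensuring that $\mathcal{A}^{-1}z$ is a genuine strongly measurable element of $L^p_\mu(U,V)$, so that Theorem~1.1.6 and the Lebesgue--Bochner framework apply in the first place. Everything else amounts to bookkeeping with H\"older's inequality and the equivalence-class conventions of Bochner spaces.
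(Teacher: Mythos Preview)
The paper does not supply its own proof of this statement: the corollary is quoted verbatim from \cite{Gittelson2011} (as indicated in its heading) and is used in the paper only as a black box, so there is no in-paper argument to compare against.

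That said, your proposal is a correct and natural proof. Existence via $\tilde u:=\mathcal{A}^{-1}z$ and the H\"older bound is exactly what Theorem~1.1.6 is set up to deliver; the uniqueness step is the standard duality argument, and your use of separability of $W$ to reduce to countably many scalar test functions $\mathbbm{1}_E\eta_n$ is the right way to pass from vanishing pairings to $g=0$ $\mu$-a.e. The identification of $\tilde u$ with the pointwise solution $u^{\bsy}$ of \eqref{parametric_op} via Theorem~1.1.1 and the uniqueness you just established is also clean. One minor remark: you assert that $\tilde u=\mathcal{A}^{-1}z$ satisfies $A(\bsy)\tilde u(\bsy)=z$ for $\mu$-a.e.\ $\bsy$; this is true, but strictly speaking it uses that the extended operator $\mathcal{A}$ on $L^p_\mu(U,V)$ still acts pointwise a.e., which follows by approximating in $L^p_\mu$ by continuous functions and passing to an a.e.-convergent subsequence. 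With that small clarification the argument is complete.
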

We thus no longer distinguish between the solution $u\rev{(\bsy)}$ of \eqref{parametric_op} and its equivalence class $\tilde{u}\rev{(\bsy)}$.


\section{PDE-constrained Optimization Under Uncertainty}\label{sec:mathset}

We are interested in solving an optimal control problem in the presence of uncertainty by minimizing the averaged least square difference of the state $u$ and a desired target state $u_0$. The state $u$ is the solution of a parametric linear operator equation, steered by a control function, and having a parameter vector as input coefficient. The parameter vector is in principle infinite-dimensional, and in practice might need a large finite number of terms for accurate approximation.

Our goal of computation is the following optimal control problem
\begin{equation}
\min_{z \in \calZ, u\in \rev{L^{2}_\mu(U,V)}} J(u,z)\,,\quad J(u,z) := \frac{1}{2} \int_U \|\mathcal{Q}u-u_0\|^2_{\frakJ}\,\rev{\mu(\mathrm d\bsy)} + \frac{\alpha}{2} \|z\|^2_{\mathcal{Z}}\,,
\label{eq:1.1}
\end{equation}
subject to the parametric linear operator equation in $L^{\rev{2}}_\mu(U,W')$
\begin{align}\label{eq:constraint}
    \mathcal{A}u = \mathcal{B}z\,,
\end{align}
\rev{where $\calZ$ and $\frakJ$ are Hilbert spaces}, $u_0 \in \frakJ$, $\mathcal{Q} \in \mathcal{L}(V,\frakJ)$, $\mathcal{B}\in \mathcal{L}(\calZ,W')$. In particular, the operators $\mathcal{B}$ and $\mathcal{Q}$ are not dependent on $\bsy$ and thus can be uniformly bounded for all $\bsy$, i.e., $\|\mathcal{B}\|_{\mathcal{L}(\calZ,W')}\leq C_1$ and $\|\mathcal{Q}\|_{\mathcal{L}(V,\frakJ)}\leq C_2$ for some $C_1,C_2>0$ and all $\bsy \in U$. This implies in particular, that $\mathcal{B}z\in L^{\rev{2}}_\mu(U,W')$ for all deterministic controls $z\in \calZ$ and $\mathcal{Q}u \in L^{2}_\mu(U,\frakJ)$ for all $u \in L^{2}_\mu(U,V)$. Moreover, we assume that $\calQ$ and $\calB$ have a bounded inverse, i.e., $\|\calB^{-1}\|_{\mathcal{L}(W',\calZ)}\leq C_3$ and $\|\mathcal{Q}^{-1}\|_{\mathcal{L}(\frakJ,V)}\leq C_4$ for $C_3,C_4>0$. In particular, we assume that $\mathcal{A}$ is a boundedly invertible operator as described in Section \ref{sec:parametricOperatorEqns}.

\begin{theorem}
    Let $\alpha \rev{~>~} 0$. Then problem \eqref{eq:1.1} -- \eqref{eq:constraint} has a \rev{unique} solution $(z^*,u^*)$. 
\end{theorem}
\begin{proof}
    We note, that $L^2_{\mu}(U,V)$ is a Hilbert space if $V$ is a Hilbert space. The result then follows from standard optimal control theory, see, e.g., \cite[Theorem 1.43]{hinze2008optimization}.
\end{proof}

Substituting $u = \mathcal{A}^{-1}\mathcal{B}z$ into $J$ gives $\hat J(z) := J(\mathcal{A}^{-1}\mathcal{B}z,z)$ and leads to the equivalent formulation of problem \eqref{eq:1.1} -- \eqref{eq:constraint}
\begin{equation}\label{eq:reduced}
    \min_{z \in \rev{\mathcal{Z}}} \hat J(z)\,,\quad \hat J(z) := \frac{1}{2} \int_U \|\mathcal{Q}\mathcal{A}^{-1}\mathcal{B}z-u_0\|^2_{\frakJ}\,\rev{\mu(\mathrm d\bsy)} + \frac{\alpha}{2} \|z\|^2_{\calZ}\,.
\end{equation}

\rev{
\begin{remark}
From Corollary \ref{coro:1.1.8} we conclude that the solution $u$ of \eqref{eq:constraint} and the solution $\hat{u}$ of 
\begin{align}\label{eq:as_parametric_op}
A(\bsy) \hat{u}(\bsy) = \mathcal{B} z,\quad \text{for all } \bsy \in U
\end{align}
are $\mu$-almost everywhere identical. Thus, there holds that $J(u,z) = J(\hat{u},z)$ and hence replacing \eqref{eq:constraint} by \eqref{eq:as_parametric_op} will not affect the solution of the optimization problem \eqref{eq:reduced}.
\end{remark}
}

\subsection{Example}\label{31example}
Let $D \subset \R^d$, $d\in \{1,2,3\}$ denote a bounded domain with Lipschitz boundary $\partial D$ and let $U = [-1,1]^{\N}$ denote \rev{the parameter domain}.
    Let $\alpha \rev{~>~} 0$ and \rev{l}et \rev{the target state} $u_0 \in L^2(D)$. \rev{C}onsider the optimal control problem
\begin{equation}
\min_{z \in L^2(D),u \in L^2_\mu(U,V)} J(u,z)\,,\quad J(u,z) := \frac{1}{2} \int_{U}\, \|u-u_0\|_{L^2(D)}^2\, \rev{\mu(\mathrm d\bsy)} + \frac{\alpha}{2} \| z\|_{L^2(D)}^2\,, \label{eq:1.1_old} 
\end{equation}
subject to the \rev{PDE}
\begin{align}
- \nabla \cdot (a(\pmb y,\pmb x) \nabla u(\pmb y, \pmb x)) &= z(\pmb x) \quad &\pmb x \in D\,,&\quad \pmb y \in U\,,\label{eq:1.2} \\
u(\pmb x) &= 0 \quad &\pmb x \in \partial D\,\rev{.}
\label{eq:1.3}
\end{align}
To ensure wellposedness of \eqref{eq:1.1_old}-\eqref{eq:1.3} we make the following \rev{assumption}: 
\begin{assumption}\label{assump:wellposedness}\rev{
    The uncertainty is described by a sequence $\pmb y = (y_j)_{j\rev{\in \mathbb{N}}}$ of parameters which are independently and identically distributed (i.i.d.) uniformly in $[-1,1]$ for each $j \in \mathbb{N}$, i.e., $\pmb y$ is distributed on $[-1,1]^{\N}$ with probability measure $\mu$, where $\mu(\mathrm d\pmb y) = \bigotimes_{j\rev{\in \mathbb{N}}} \frac{\mathrm dy_j}{2} = \mathrm d\pmb y$. The sequence $\bsy$ enters the diffusion coefficient $a(\pmb y,\pmb x)$ in \eqref{eq:1.2} affinely, that is
\begin{align}\label{eq:KLE}
a(\boldsymbol{y},\boldsymbol{x}):=a_0(\boldsymbol{x})+\sum_{j\in \mathbb{N}}y_j\psi_j(\boldsymbol{x})\,, \quad \boldsymbol{x}\in D\,, \quad \boldsymbol{y}\in  [-1,1]^{\N}\,,
\end{align}
for $a_0 \in L^\infty(D)$, $\psi_j\in L^\infty(D)$ for all $j\in \mathbb{N}$ with $(\|\psi_j\|_{L ^\infty})_{j\in \mathbb{N}}\in\ell^1(\mathbb{N})$, such that
\begin{align*}
0 < a_{\min} \leq a(\pmb y,\pmb x) \leq a_{\max} <\infty\,, \quad \pmb x \in D\,, \quad \pmb y \in  [-1,1]^{\N}\,,
\end{align*}
for some constants $0<a_{\min}\le a_{\max}$ independent of $\pmb x \in D$ and $\bsy \in  [-1,1]^{\N}$.
}
\end{assumption}

The PDE \eqref{eq:1.2} and \eqref{eq:1.3} can be stated in the parametric variational form:
For fixed $\pmb y \in U$, find $u(\bsy) \in V$ such that
\begin{align}
\int_{D} a(\pmb y, \pmb x) \nabla u(\bsy, \pmb x) \cdot \nabla v(\pmb x)\, \mathrm d\pmb x = \int_D z\rev{(\bsx)}\,v\rev{(\bsx)}\, \mathrm d\pmb x \quad \forall v \in H_0^1(D)\,,\label{eq:parametricweakproblem}
\end{align}

By \rev{Assumption~\ref{assump:wellposedness}} the variational form \eqref{eq:parametricweakproblem} is continuous and coercive and by the Lax--Milgram lemma admits a unique solution $u(\bsy) \in H_0^1(D)$ for each $z \in H^{-1}(D)$ and fixed $\bsy \in U$, which satisfies the a-priori bound
\begin{equation*}
    \|u(\bsy)\|_{H_0^1(D)} \leq \frac{\|z\|_{H^{-1}(D)}}{a_{\min}}\,.
\end{equation*}
In particular, the operator $A(\bsy) \in \mathcal{L}(H_0^1(D),H^{-1}(D))$ that can be associated with the bilinear form in \eqref{eq:parametricweakproblem} satisfies $\|A(\bsy)\|_{\mathcal{L}(H_0^1(D),H^{-1}(D))}\leq a_{\max}$ and $\|A(\bsy)^{-1}\|_{\mathcal{L}(H^{-1}(D),H_0^1(D))}\leq 1/a_{\min}$.

For the choices $W = V = H_0^1(D)$, $\calZ = L^2(D)$, $\frakJ = L^2(D)$, the problem \eqref{eq:1.1_old} with \eqref{eq:1.2} and \eqref{eq:1.3} fits into the abstract framework of \eqref{eq:1.1} and \eqref{eq:constraint}. By the general formulation of problem \eqref{eq:1.1} and \eqref{eq:constraint}, we cover a large variety of optimal control problems under uncertainty, including parabolic PDEs, see \cite{Kunoth}.

\subsection{Optimality conditions}
The gradient of $\hat{J}$ as an element of $\calZ$ is given by
\begin{equation*}
    \nabla \hat{J}(z) = \mathcal{B}^{\ast} \int_U (\mathcal{A}^{-1})^{\ast} Q^{\ast} (Q\mathcal{A}^{-1} \mathcal{B}z - u_0)\,\rev{\mu(\mathrm d\bsy)} + \alpha z.
\end{equation*}
The gradient is the unique representer in $\calZ$ of the Fr\'echet derivative $\hat{J}'(z)$ of $\hat{J}$, which belongs to $\calZ'$, i.e., $\hat{J}'(z) = R_{\calZ} \nabla \hat{J}$, where $R_Z: Z \to Z'$ denotes the Riesz operator in the Hilbert space $Z$ given by $\langle z_1, R_{\calZ} z_2\rangle_{\calZ,\calZ'} = \langle z_1, z_2\rangle_{\calZ,\calZ}$. Here and in the following the operator notation $A^*$ for an operator $A:H_1\to H_2$ between two Hilbert $H_1,H_2$ spaces denotes the Hilbert space adjoint, i.e., $\langle Ax, y\rangle_{H_2} = \langle x, A^*y\rangle_{H_1}$.
Defining the adjoint state $q:= (\mathcal{A}^{-1})^{\ast} Q^{\ast} (Q\mathcal{A}^{-1} \mathcal{B}z - u_0)$ leads to the adjoint equation in $L_{\mu}^2(U,V)$
\begin{equation*}
    \mathcal{A}^{\ast}q = Q^{\ast} (Q\mathcal{A}^{-1} \mathcal{B}z - u_0)\,,
\end{equation*}
and the reformulation of the gradient 
\begin{equation}\label{eq:gradient}
    \nabla\hat{J}(z) = \mathcal{B}^{\ast}\int_Uq\,\rev{\mu(\mathrm d\bsy)} + \alpha z\,,
\end{equation}
It is well known (see, e.g., \cite[Theorem 1.46]{hinze2008optimization}) that $z^*  \in \rev{\calZ}$ is a minimizer of  \eqref{eq:reduced} if and only if
\begin{align*}
    \mathcal{A} u &= \mathcal{B}z^*\\
    \mathcal{A}^{\ast} q &= Q^{\ast} (Q\mathcal{A}^{-1} \mathcal{B}z^* - u_0)\\
    \rev{\nabla \hat{J}(z^*)} &\rev{~= 0}.
\end{align*}

\rev{
\begin{remark}
We point out that problem~\eqref{eq:1.1} can be stated with additional control constraints $z \in \calZ_{ad} \subset \calZ$ and state constraints $u \in \calY_{ad} \subset L^2_\mu(U,V)$. Then, the constrained problem has a unique solution, if $\calZ_{ad} \subset \calZ$ and $\calY_{ad} \subset L^2_\mu(U,V)$ are both convex and closed. Moreover, in the case with control constraints the gradient optimality condition $\nabla \hat{J}(z^*)= 0$ is replaced by the variational inequality $\langle \nabla \hat{J}(z^*), z-z^*\rangle_{\calZ} \geq 0$ $\forall z\in \calZ_{ad}$. For optimality conditions with general constraints we refer to~\cite{hinze2008optimization}.
\end{remark}}

\subsection{All-at-once formulation}
\rev{Optimization algorithms for the reduced formulation~\eqref{eq:reduced} typically assume that the \rev{underlying model} can be solved exactly in each iteration, i.e., an existing solver for the solution of the state equation is embedded into an optimization loop.~}
Thereby it is usually preferable to compute the gradient using a sensitivity or adjoint approach, cp.~\eqref{eq:gradient}. \rev{However, the main drawback this approach is that it requires~}
the repeated costly solution of the (possibly nonlinear) state equation, even in the initial stages when the \rev{control} variables are still far from their optimal value. This drawback can be partially overcome by carrying out the early optimization steps with a coarsely discretized PDE and/or only few samples from the space of parameters $U$. 

In this section, we will follow a different approach, which solves the optimization problem and the \rev{underlying model} simultaneously by treating both, the \rev{control} and the state variables, as optimization variables. Various names for the simultaneous solution of the \rev{control} and state equation exist: all-at-once, one-shot method, piggy-back iterations etc. The state and the \rev{control} variables are coupled through the PDE constraint, which is kept explicitly as a side constraint during the optimization.

\subsubsection{Penalty methods}\label{sec:penalty}
A penalty method solves a constrained optimization problem by solving a sequence of unconstrained problems. Using for instance a quadratic penalty method in the present context, one aims to find a sequence of (unique, global) minimizers $(z_k,u_k)$, given by
\begin{equation}
(z_k,u_k) = \argmin_{z_k,u_k} \bigg( J(u_k,z_k) + \frac{\lambda_k}{2} \|e(u_k,z_k)\|_{L^{{2}}_\mu(U,W')}^2 \bigg)\,,
\end{equation}
that converges to the minimizer $(z^\ast,u(z^\ast))$ of the constrained problem \eqref{eq:reduced}. Here, we have defined the residual of \eqref{eq:constraint} as $e(u,z): L^{2}_\mu(U,V) \times \calZ \to L^{2}_{{\mu}}(U,W')$ with $e(u,z) := \mathcal{A}u - \mathcal{B}z$.
The disadvantage of penalty methods is that the penalty parameter $\lambda_k$ needs to be sent to infinity {(for quadratic penalty functions)} which renders the resulting $k$-th problem increasingly ill-conditioned. {The solutions $(z_k,u_k)$ of the sequence of unconstrained optimization problems can be interpreted as maximum a posteriori (MAP) estimates in a Bayesian setting. The quadratic penalty function can then be motivated by an Gaussian model error assumption. From a Bayesian viewpoint, the limit $\lambda_k\to \infty$ corresponds to the small noise limit. This perspective is very appealing, as it allows to introduce and control model error, in particular, more sophisticated choices such as distributed penalty function can be straightforwardly incorporated. We refer to \cite{GSW2020} for more details  on the connection between the penalty and Bayesian approach.}

\section{Surrogates}\label{sec:surrogates}
\rev{In the field of uncertainty quantification mathematical models governed by PDEs with uncertain coefficients are often parameterized in terms of a countably infinite number of parameters $\bsy = (y_j)_{j\in \mathbb{N}} \in U$ and require a large finite number of parameters for accurate approximation. Thus, throughout the rest of this paper we assume that the parameter domain $U$ is of the form $U = \times_{j\in \mathbb{N}}~U_j$, where $U_j$ is assumed to be a compact Hausdorff space for each $j\in \mathbb{N}$. Since parameterized models are computationally expensive to evaluate,
r}eplacing the solution of the forward model by a surrogate, that is cheap to evaluate, can be a tremendous advantage. In the next sections we will analyze the optimization problem in which the parametric mapping is replaced by a surrogate, i.e., the mapping $u\rev{(\bsy)}: U \to V$ is replaced by a surrogate
\begin{align*}
u(\theta,\bsy): \Theta \times U \to V
\end{align*}
where $\theta \in \Theta$ are the parameters of the surrogate. To do so we introduce the following notation: a multiindex is denoted by $\bsnu = (\nu_j)_{j\in \mathbb{N}} \in \mathbb{N}_0^{\mathbb{N}}$.
We denote the set of all indices with finite support by $\mathcal{F} := \{\bsnu \in \mathbb{N}_0^{\mathbb{N}}: \sum_{j\in \mathbb{N}} \nu_j < \infty\}$ and follow the conventions $\bsnu! := \prod_{j \in \mathbb{N}} \nu_j$ and $0! := 1$. Moreover, for a sequence $\bsy = (y_j)_{j\in \mathbb{N}}$ and a multiindex $\bsnu \in \mathcal{F}$ we write $\bsy^\bsnu := \prod_{j\in\mathbb{N}} y_j^{\nu_j}$, using the convention $0^0 := 1$.
By $|\Lambda|$ we denote the finite cardinality of a set $\Lambda$.
For a real Banach space $V$, its complexification is the space $V_{\mathbb{C}} := V + iV$ with the Taylor norm $\|v+iw\|_{V_{\mathbb{C}}} := \sup_{t\in [0,2\pi)}\|\cos(t)v - \sin(t)w\|_V$ for all $v,w \in V$ and $i$ denoting the imaginary unit. \rev{Finally, for $r \in (0,\infty)$ we denote $B_r := \{z \in \mathbb{C} : |z|<r\}$.}

Possible surrogates include for instance
\begin{itemize}
\item a power series of the form 
\begin{align}\label{eq:power}
u(\theta,\bsy) = \sum_{\boldsymbol{\nu}\in \mathcal{F}} \theta_{\boldsymbol{\nu}} \bsy^{\boldsymbol{\nu}},
\end{align}
\rev{where $\theta = (\theta_{\boldsymbol{\nu}})_{\boldsymbol{\nu}\in \calF}$.}
\item an orthogonal series of the form
\begin{align}\label{eq:Legendre}
u(\theta,\bsy) = \sum_{\boldsymbol{\nu}\in \mathcal{F}} \theta_{\boldsymbol{\nu}} P_{\boldsymbol{\nu}}\,, \quad P_{\boldsymbol{\nu}}:= \prod_{j\rev{\in \mathbb{N}}} P_{\nu_j}(y_j)\,,
\end{align}
\rev{where $\theta = (\theta_{\boldsymbol{\nu}})_{\boldsymbol{\nu}\in \calF}$, and }$P_k$ is the Legendre polynomial of degree $k$ defined on $[-1,1]$ and normalized with respect to the uniform measure, i.e., such that $\int_{-1}^1|P_k(t)|^2 \frac{\mathrm dt}{2} = 1$. 
\item a neural network 
with $L \in \mathbb{N}$ layers, defined by the recursion
\begin{align}
x_{\ell} &:= \sigma(W_{\ell}x_{\ell-1} + b_{\ell})\,, \quad \text{for } \ell = 1,\ldots,L-1\,,\notag\\
u(\theta,\bsy) &:= W_{L}x_{L-1} + b_{L},\quad x_0 := \bsy\,.\label{eq:neuralnetwork}
\end{align}
Here the parameters $\theta \in \Theta := \times_{\ell=1}^L (\mathbb{R}^{N_\ell \times N_{\ell-1}} \times \mathbb{R}^{N_\ell})$ are a sequence of matrix-vector tuples
\begin{align*}
\theta = \big((W_{\ell},b_{\ell})\big)_{\ell = 1}^L = \big(W_1,b_1),(W_2,b_2),\ldots,(W_L,b_L)\big)\,,
\end{align*}
and the activation function $\sigma$ is applied component-wise to vector-valued inputs.
\item Gaussian process or kernel based approximations. Recently, a general framework for the approximation of solution of nonlinear~\rev{PDE}s has been proposed in \cite{chen2021solving}. The authors demonstrate the efficiency of Gaussian processes for nonlinear problems and derive a rigorous convergence analysis. We refer to \cite{chen2021solving} for more details, in particular also to the references therein. 
\item reduced basis or low rank approaches, which have been demonstrated to efficiently approximate the solution of the \rev{underlying} problem even in high or \rev{infinite} dimensional settings, see e.g., \cite{10.1093/imanum/drx052,rozza:hal-01722593}.
\end{itemize} 
There has been a lot of research towards efficient surrogates, in particular in the case of parametric PDEs and the above list is by far not exhaustive. We provide in the following a general framework to train surrogates simultaneously with the optimization step and illustrate the ansatz for polynomial chaos and neural network approximations.

Based on the smoothness of the underlying function, approximation results of the above surrogates can be stated.~\rev{In the remainder of this section, we will delve further into the smoothness of the underlying function and briefly discuss the approximation results of the above surrogates.} To this end, we recall that
the solution $u(\bsy)$ of a parametric linear operator equation \eqref{parametric_op} is an analytic function \rev{with respect }to the parameters $\bsy$, if the linear operators $A(\bsy) \in \mathcal{L}(V,W')$ are isomorphisms and as long as the operator $A$ and the right-hand side $z$ are parametrized in an analytical way, see e.g., \cite[Theorem 1.2.37]{Zech2018}, or \cite[Theorem 4]{Kunoth} which in addition provides bounds on the partial derivatives \rev{with respect }to the parameters. Moreover, analytic functions between Banach spaces admit holomorphic extensions, i.e., for analytic $f: U \to Y$ between two real Banach spaces $X$ and $Y$ with $U\subseteq X$ open, there exists an open set $\tilde{U}\subseteq X_{\mathbb{C}}$ and a holomorphic extension $\widetilde{f}:\widetilde{U} \to Y_{\mathbb{C}}$ such that $U\subseteq \widetilde{U}$ and $\widetilde{f}|_{U} = f$, see \cite[Proposition 1.2.33]{Zech2018}.
To quantify the smoothness of the underlying function we will use the notion of $(\bsb,\epsilon)$-holomorphy of a function, which is a sufficient criterion for many approximation results, see \cite{SZ19} and the \rev{references} therein:
Given a monotonically decreasing sequence $\boldsymbol{b} = (b_j)_{j\in\mathbb{N}}$ of positive real numbers that satisfies $\boldsymbol{b} \in \ell^p(\mathbb{N})$ for some $p \in (0,1]$, a continuous function $u\rev{(\bsy}): U \to V$ is called $(\bsb,\epsilon)$-holomorphic if
for any sequence $\bsrho := (\rho_j)_{j\rev{\in \mathbb{N}}} \in [1,\infty)^{\mathbb N}$, satisfying
\begin{align*}
\sum_{j \rev{\in \mathbb{N}}} (\rho_j-1) b_j \leq \epsilon\,,
\end{align*}
for some $\epsilon >0$, there exists a complex extension $\widetilde{u}: B_{\bsrho} \to V_{\mathbb{C}}$ of $u$, where $B_{\bsrho} := \times_{j\in \mathbb{N}} B_{\rho_j}$, with $\widetilde{u}(\bsy) = u(\bsy)$ for all $\bsy \in U$, such that $w \mapsto \widetilde{u}(\boldsymbol{w}): B_{\bsrho} \to V_{\mathbb{C}}$ is holomorphic as a function in each variable $w_j \in B_{\rho_j}$, $j \in \mathbb{N}$ with uniform bound
\begin{align*}
\sup_{\boldsymbol{w} \in B_{\bsrho}} \|u(\boldsymbol{w})\|_{V_{\mathbb{C}}} \leq C\,.
\end{align*}

The sequence $\bsb$ determines the size of the domains of the holomorphic extension, i.e., the faster the decay in $\bsb$, the faster the radii $\rho_j$ may increase. Furthermore, the summability exponent $p$ of the sequence $\bsb \in \ell^p(\N)$ will determine the algebraic convergence rates of the surrogates.~\rev{For details on the approximation of $(\bsb,\epsilon)$-holomorphic functions we refer to~\cite{CohenDeVore15} for generalized polynomial chaos expansions, and to~\cite{SZ19} for the approximation by neural networks.}

\begin{remark}\rev{ (i) Setting $\bsb := (\|\psi_j\|_{L^\infty(D)})_{j\in \mathbb{N}}$ and assuming in addition to \rev{Assumption~\ref{assump:wellposedness}} that $\bsb \in \ell^p(\mathbb{N})$ for some $p\in (0,1)$, the parametric solution $u\rev(\bsy)$ of the uniformly elliptic problem \eqref{eq:parametricweakproblem} is $(\bsb,\epsilon)$-holomorphic, see e.g., \cite{CohenDeVore15, SZ19}. Thus, the polynomial expansions and the neural network can be applied to approximate the elliptic PDE problem in Subsection \ref{31example}.}

\rev{(ii) We note also that the series expansions \eqref{eq:power} and \eqref{eq:Legendre} are linear in the parameters $\theta$, whereas the neural network is nonlinear in its parameters due to the nonlinear activation function $\sigma$.}
\end{remark}

\section{Consistency analysis}\label{sec:consistency}

In this section we first analyze the truncation error in Subsection~\ref{sec:dimtrunc} before we study consistency \rev{with respect }to the number of data points and the penalty weight in Subsection~\ref{sec:consistency2}.

\subsection{Dimension truncation}
\label{sec:dimtrunc}
Let $z^*$ be the solution of problem \eqref{eq:reduced} with infinite-dimensional parameter $\bsy = (y_j)_{j\rev{\in \mathbb{N}}}$ and let $z_s^*$ be the solution of the truncated problem, i.e., the solution of \eqref{eq:reduced} with $\bsy$ having only $s$-many non-zero components $(\bsy_{\leq s},\boldsymbol{0}) = (y_1,\ldots,y_s,0,0,\ldots)$ and involving integrals over the $s$-dimensional domain \rev{$U_s$, that is}
\begin{align*}
    \rev{
    \min_{z \in \rev{\mathcal{Z}}} \hat J_s(z)\,,\quad \hat J_s(z) := \frac{1}{2} \int_{U_s} \|\mathcal{Q}\mathcal{A}^{-1}\mathcal{B}z-u_0\|^2_{\frakJ}\,\rev{\mu(\mathrm d(\bsy_{\le s},\boldsymbol{0}))} + \frac{\alpha}{2} \|z\|^2_{\calZ}\,.
}\end{align*}
Then, there holds that $\langle {\nabla}\hat{J}_s(z_s^*), z^*-z^*_s\rangle_{\calZ} \geq 0$ and $\langle -{\nabla} \hat{J}(z^*), z^*-z^*_s\rangle_{\calZ} \geq 0${, where $\nabla \hat{J}_s$ is the gradient of the truncated objective functional}. Hence it follows by adding the two inequalities that 
  $\langle {\nabla}\hat{J}_s(z^*_s) - {\nabla}\hat{J}(z^*),z^*-z^*_s\rangle_{\calZ}\ge0.$
Let $\alpha>0$, then with the $\alpha$-strong convexity of $\hat{J}_{\rev{s}}$, i.e., \rev{$\langle \nabla \hat{J}_s(z_s^*) - \nabla \hat{J}_s(z^*) + \alpha (z^* - z^*_s), z^* - z^*_s \rangle_{\calZ} \le 0$}, and the Cauchy--Schwarz inequality one obtains
\begin{align*}
    \alpha \|z^* - z^*_s\|_{\calZ}^2 &\leq \alpha \|z^* - z^*_s\|_{\calZ}^2+ \langle {\nabla}\hat{J}_{\rev{s}}(z^*_s) - {\nabla}\hat{J}(z^*),z^*-z^*_s\rangle_{\calZ} \\
    &\rev{~= \langle \nabla \hat{J}_s(z_s^*) - \alpha z_s^* - \nabla \hat{J}(z^*) + \alpha z^*, z^* - z_s^* \rangle_{\calZ} }\\
    &\rev{~=\langle \nabla \hat{J}_s(z_s^*) - \alpha z^*_s - \nabla \hat{J}_s(z^*) + \alpha z^*, z^* - z^*_s \rangle_{\calZ}}\\
    &\quad \rev{~+ \langle \nabla \hat{J}_s(z^*) - \alpha z^* - \nabla \hat{J}(z^*) + \alpha z^*, z^* - z^*_s \rangle_{\calZ}}\\
    & \leq \|{\nabla}\hat{J}(z^*) - {\nabla}\hat{J}_s(z^*)\|_{\calZ} \|z^* - z^*_s\|_{\calZ}\,,
\end{align*}
and finally
\begin{equation}\label{eq:trunc_error}
    \|z^* - z^*_s\|_{\calZ} \leq \alpha^{-1} \|\mathcal{B}^* \int_U q(\bsy,z^*) - q((\bsy_{\leq s},\boldsymbol{0}),\rev{z^*})\,\rev{\mu(\mathrm d\bsy)}\|_{\calZ}\,.
\end{equation}
In order to control the dimension truncation error of the optimal controls it is thus sufficient to bound the dimension truncation error of the adjoint state. To derive a rigorous error bound and convergence rate, we specify the regularity assumptions of the forward operator, see also \cite[Assumption 1]{Kunoth}.
\begin{assumption}\label{assump:analytic_op}
The parametric operator family $\{A(\bsy) \in \mathcal{L}(V,W') : \bsy \in U\}$ is a regular $p$-analytic operator family for some $0 < p \leq 1$, that is
\begin{itemize}
	\item[(i)] The operator $A(\bsy)$ is invertible for every $\bsy \in U$ with uniformly bounded inverse $A^{-1}(\bsy) \in \mathcal{L}(W',V)$, i.e., there exists $C>0$ such that 
$\sup_{\bsy \in U} \|A(\bsy)^{-1}\|_{\mathcal{L}(W',V)} \leq C\,.$
	\item[(ii)] For each $\bsy \in U$, the operator $A(\bsy)$ is a real analytic function with respect to $\bsy$. Precisely, this means there exists a nonnegative sequence $\bsb = (b_j)_{j\in \mathbb{N}} \in \ell^p(\mathbb{N})$ such that for all $\bsnu \in \calF\setminus\{0\}$ and the same $C$ as in (i) it holds that
\begin{align}\label{eq:bsb_Assumpt}
\sup_{\bsy \in U} \|A(\boldsymbol{0})^{-1} \partial_{\bsy}^{\bsnu} A(\bsy)\|_{\mathcal{L}(V,V)} \leq C\bsb^{\bsnu}\,.
\end{align}
\end{itemize}
\end{assumption}
In \cite[Theorem 4]{Kunoth}
the authors show that for a parametric family of operators $\{A(\bsy)\in \mathcal{L}(V,W'): \bsy \in U\}$ satisfying Assumption \ref{assump:analytic_op} for some $0<p\leq 1$, for $\rev{z}\in W'$, and every $\bsy \in U$ there exists a unique solution $u(\bsy) \in V$ of the parametric operator equation %
\begin{align}\label{eq:operator_eq}
A(\bsy) u(\bsy) = \rev{z}
\end{align}
and the parametric solution family $u(\bsy)$ depends analytically on the parameters $\bsy \in U$, with partial derivatives satisfying
\begin{align*}
\sup_{\bsy \in U} \|\partial_{\bsy}^{\bsnu} u (\bsy)\|_V \leq C \|\rev{z}\|_{W'} |\bsnu|! \frac{\bsb^{\bsnu}}{(\ln{2})^{|\bsnu|}}\,,
\end{align*}
where $\bsb$ is defined in \eqref{eq:bsb_Assumpt}.
Compared to \eqref{eq:operator_eq} the right-hand side of the adjoint state equation depends in addition on the parameters $\bsy \in U$. In particular, in \cite[Corollary 3]{Kunoth} it is shown that affine parametric operators, as in~Example \ref{31example}, are $p$-analytic. We next show that the adjoint state depends analytically on the parameters with similar bounds on the partial derivatives.

\begin{theorem}
Let the parametric family of operators $\{A(\bsy) \in \mathcal{L}(V,W'): \bsy \in U\}$ satisfy Assumption \ref{assump:analytic_op} for some $0<p\leq 1$. Then, for $f(\bsy) := \mathcal{Q}^*(\mathcal{Q}{A(\bsy)}^{-1}\mathcal{B}z - u_0) \in V$ and every $\bsy \in U$ there exists a unique solution $q(\bsy) \in W'$ of the parametric operator equation 
\begin{equation}\label{eq:adjoint_para}
    ({A}(\bsy))^*q(\bsy) = f(\bsy)\,.
\end{equation}
Moreover, if $\|\partial_{\bsy}^{\bsnu} f(\bsy)\|_{V} \leq C (\|z\|_{\calZ} + \|u_0\|_{\frakJ}) |\bsnu|! \bsb^{\bsnu}/(\ln{2}^{|\bsnu|})$ for all finitely supported multiindices $\bsnu \in \calF$, then the parametric solution family $q(\bsy)$ depends analytically on the parameters $\bsy \in U$, with partial derivatives satisfying
\begin{equation*}
\sup_{\bsy \in U} \|\partial_{\bsy}^{\bsnu} q (\bsy)\|_{W'} \leq C (\|z\|_{\calZ} + \|u_0\|_{\frakJ}) (|\bsnu|+1)! \frac{\bsb^{\bsnu}}{(\ln{2})^{|\bsnu|}}\,.
\end{equation*}
\end{theorem}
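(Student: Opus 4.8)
The plan is to deduce the bound on $\partial_\bsy^\bsnu q(\bsy)$ from the operator equation $(A(\bsy))^* q(\bsy) = f(\bsy)$ by the same inductive-on-$|\bsnu|$ / Leibniz-expansion argument used in \cite[Theorem 4]{Kunoth} for the primal problem, treating the $\bsy$-dependence of the right-hand side as the new feature. First I would note that since $A(\bsy)$ is invertible with uniformly bounded inverse (Assumption \ref{assump:analytic_op}(i)), so is its adjoint $(A(\bsy))^*$, with $\|((A(\bsy))^*)^{-1}\|_{\mathcal L(V',W')} = \|A(\bsy)^{-1}\|_{\mathcal L(W',V)} \le C$; combined with $f(\bsy) \in V$ (and the embedding/identification making $f(\bsy)$ a legitimate right-hand side for the adjoint equation) this gives existence and uniqueness of $q(\bsy)$ for every $\bsy$, and analyticity of $q$ follows once the derivative bounds are established. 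For the quantitative part, I would apply $\partial_\bsy^\bsnu$ to both sides of $A(\bsy)^* q(\bsy) = f(\bsy)$ and use the Leibniz rule:
\begin{equation*}
\sum_{\bsm \le \bsnu} \binom{\bsnu}{\bsm} (\partial_\bsy^{\bsnu-\bsm} A(\bsy)^*)(\partial_\bsy^\bsm q(\bsy)) = \partial_\bsy^\bsnu f(\bsy)\,,
\end{equation*}
isolate the $\bsm = \bsnu$ term, and premultiply by $(A(\bsy)^*)^{-1} = (A(\boldsymbol 0)^{-1})^* \cdot (\text{something})$ — more precisely rewrite $(A(\bsy)^*)^{-1}\partial_\bsy^{\bsnu-\bsm}A(\bsy)^* $ using the transpose of the bound \eqref{eq:bsb_Assumpt}, namely $\sup_\bsy\|(A(\boldsymbol 0)^{-1}\partial_\bsy^{\bsnu-\bsm}A(\bsy))^*\|_{\mathcal L(V',V')} \le C\bsb^{\bsnu-\bsm}$. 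This yields the recursive estimate
\begin{equation*}
\|\partial_\bsy^\bsnu q(\bsy)\|_{W'} \le C\|\partial_\bsy^\bsnu f(\bsy)\|_V + C\sum_{\boldsymbol 0 \ne \bsm \le \bsnu}\binom{\bsnu}{\bsm}\bsb^{\bsm}\|\partial_\bsy^{\bsnu-\bsm}q(\bsy)\|_{W'}\,,
\end{equation*}
(after reindexing $\bsm \leftrightarrow \bsnu - \bsm$), which is exactly the recursion solved in \cite{Kunoth} except that the inhomogeneous term $C\|\partial_\bsy^\bsnu f(\bsy)\|_V$ now carries the factor $|\bsnu|!\,\bsb^\bsnu/(\ln 2)^{|\bsnu|}$ instead of just $\bsb^\bsnu$ times a constant.

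Next I would prove the claimed bound by induction on $|\bsnu|$, with the ansatz $\|\partial_\bsy^\bsnu q(\bsy)\|_{W'} \le C(\|z\|_\calZ + \|u_0\|_\frakJ)(|\bsnu|+1)!\,\bsb^\bsnu/(\ln 2)^{|\bsnu|}$. Plugging the inductive hypothesis into the right-hand side of the recursion, the sum becomes
\begin{equation*}
C\sum_{\boldsymbol 0 \ne \bsm \le \bsnu}\binom{\bsnu}{\bsm}\bsb^\bsm \cdot C(\|z\|_\calZ+\|u_0\|_\frakJ)(|\bsnu - \bsm|+1)!\,\frac{\bsb^{\bsnu-\bsm}}{(\ln 2)^{|\bsnu-\bsm|}}\,,
\end{equation*}
which factors $\bsb^\bsnu$ out and reduces to a scalar combinatorial estimate of the form $\sum_{0 \ne \bsm \le \bsnu}\binom{\bsnu}{\bsm}(|\bsnu-\bsm|+1)!/(\ln 2)^{|\bsnu-\bsm|}$; together with the inhomogeneous term's $|\bsnu|!/(\ln 2)^{|\bsnu|}$ this has to be shown to be bounded by $(|\bsnu|+1)!/(\ln 2)^{|\bsnu|}$ up to the constant $C$ being absorbed. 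The standard tool here is the identity/inequality $\sum_{\bsm \le \bsnu, \bsm \ne \boldsymbol 0}\binom{\bsnu}{\bsm}\frac{|\bsm|!}{(\ln 2)^{|\bsm|}} \le \frac{(|\bsnu|+1)!}{(\ln 2)^{|\bsnu|}} - \frac{|\bsnu|!}{(\ln 2)^{|\bsnu|}}$ (this is exactly the combinatorial lemma underlying the $1/(\ln 2)^{|\bsnu|}$ weight and the reason $\ln 2$ appears — it comes from $\sum_{n\ge 0} n!/(\ln 2)^n$-type bookkeeping, or equivalently from $\sum_{k}\binom{n}{k}k! = \lfloor e\, n!\rfloor$-style estimates calibrated so that the recursion closes), which I would cite or reproduce from \cite{Kunoth}.

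The main obstacle I anticipate is bookkeeping the interaction between the extra factorial $|\bsnu|!$ already present in the hypothesis on $\partial_\bsy^\bsnu f$ and the factorial generated by the recursion: one has to check that the two together produce exactly $(|\bsnu|+1)!$ and not something larger like $(2|\bsnu|)!$, i.e.\ that the convolution $\sum_\bsm \binom{\bsnu}{\bsm} \cdot (\text{stuff with } |\bsnu-\bsm|!)$ does not blow the factorial order. This forces the precise form $(|\bsnu|+1)!$ in the conclusion (one more than in the primal estimate, reflecting the one extra "layer" of $\bsy$-dependence coming through $f$), and getting the constant $C$ to stay the same — rather than degrade geometrically in $|\bsnu|$ — is the delicate point; it works precisely because the $(\ln 2)^{-|\bsnu|}$ weight is tuned so that the geometric series $\sum_k (\ln 2)^{-k}\binom{\cdot}{k}k!$ converges with the right constant. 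A secondary, more pedestrian point is justifying that $f(\bsy) = \mathcal Q^*(\mathcal Q A(\bsy)^{-1}\mathcal B z - u_0)$ indeed satisfies the hypothesized derivative bound; since the theorem takes that bound as an assumption I would only remark that it follows from the primal bound of \cite[Theorem 4]{Kunoth} applied to $A(\bsy)^{-1}\mathcal B z$ together with boundedness of $\mathcal Q, \mathcal Q^*, \mathcal B$, leaving the details to the reader or to a preceding remark.
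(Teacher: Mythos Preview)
Your overall strategy---differentiate the adjoint equation via Leibniz, transfer Assumption~\ref{assump:analytic_op}(ii) to $A(\bsy)^*$ by duality, and obtain a recursion for $\|\partial_\bsy^\bsnu q\|_{W'}$ in terms of lower-order derivatives plus the forcing $\|\partial_\bsy^\bsnu f\|_V$---matches the paper exactly. The paper also proves the adjoint-operator bound $\|(A(\bsy)^*)^{-1}\partial_\bsy^{\bsnu}A(\bsy)^*\|\le C\bsb^{\bsnu}$ just as you indicate.

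Where your plan diverges is in closing the recursion. You attempt a direct strong induction with the ansatz $(|\bsnu|+1)!\,\bsb^\bsnu/(\ln 2)^{|\bsnu|}$, which leads to the sum $\sum_{\boldsymbol 0\ne\bsm\le\bsnu}\binom{\bsnu}{\bsm}(|\bsnu-\bsm|+1)!/(\ln 2)^{|\bsnu-\bsm|}$; however, the combinatorial inequality you then invoke involves $|\bsm|!$, not $(|\bsnu-\bsm|+1)!$, so it does not match what the induction actually produces, and the constant-absorption issue you correctly flag is left unresolved. The paper sidesteps this entirely by a cleaner two-step route: it first applies \cite[Lemma~5]{kuo2017multilevel}, which solves the recursion generically (for arbitrary right-hand side) to give
\[
\|\partial_\bsy^\bsnu q(\bsy)\|_{W'}\le C\sum_{\bsk\le\bsnu}\binom{\bsnu}{\bsk}\frac{|\bsk|!}{(\ln 2)^{|\bsk|}}\bsb^{\bsk}\|\partial_\bsy^{\bsnu-\bsk}f(\bsy)\|_V,
\]
and only then inserts the hypothesis on $\partial_\bsy^{\bsnu-\bsk}f$. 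The resulting double sum collapses via the Vandermonde-type identity $\sum_{\bsk\le\bsnu}\binom{\bsnu}{\bsk}|\bsk|!\,|\bsnu-\bsk|!=(|\bsnu|+1)!$, which is exactly where the extra factor $(|\bsnu|+1)$ relative to the primal bound appears. This separation---solve the recursion first, specialize the forcing second---avoids the delicate direct induction you are worried about and makes the role of the $\bsy$-dependent right-hand side transparent.
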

\begin{proof}
We prove the result by induction with respect to $|\bsnu|$. If $|\bsnu| = 0$, then $\bsnu = \boldsymbol{0}$ and the result follows from Assumption \ref{assump:analytic_op} (i) and the boundedness of $A,\mathcal{B}$ and $\mathcal{Q}$. For $\boldsymbol{0} \neq \bsnu \in \calF$ we take the partial derivative $\partial_{\bsy}^{\bsnu}$ of \rev{\eqref{eq:adjoint_para}}. By the Leibniz product rule we get
\begin{align*}
\sum_{\bsm \leq \bsnu} \binom{\bsnu}{\bsm} (\partial_{\bsy}^{\bsnu-\bsm} A(\bsy)^* (\partial_{\bsy}^{\bsm} q(\bsy)) = \partial_{\bsy}^{\bsnu} f(\bsy)\,.
\end{align*}
Separating out the $\bsnu = \bsm$ term, we obtain
\begin{align*}
A(\bsy)^*(\partial_{\bsy}^{\bsnu} q(\bsy)) = -\sum_{\bsm \leq \bsnu, \bsm \neq \bsnu} \binom{\bsnu}{\bsm} (\partial_{\bsy}^{\bsnu-\bsm} A(\bsy)^*) (\partial_{\bsy}^{\bsm} q(\bsy)) + \partial_{\bsy}^{\bsnu}f(\bsy)\,.
\end{align*}
Note that there holds
\begin{align*}
    \|A(\bsy)\|_{\mathcal{L}(V,W')} = \|A(\bsy)^*\|_{\mathcal{L}(W',V)}
\end{align*}
and for all $\bsnu \in \calF$ that
\begin{align}\label{eq:operator_deriv}
    \|\partial^\bsnu_\bsy A(\bsy)\|_{\mathcal{L}(V,W')} = \|\partial^\bsnu_\bsy A(\bsy)^*\|_{\mathcal{L}(W,V')}\,.
\end{align}
This holds true since
\begin{align*}
    \|\partial^{\bsnu}_{\bsy} A(\bsy)\|_{\mathcal{L}(V,W')} 
    &=\sup_{v\in V} \sup_{w \in W'} \frac{|\langle \partial^{\bsnu}_{\bsy} A(\bsy) v, w\rangle_{W'}|}{\|v\|_V \|w\|_{W'}}
    =\sup_{v\in V} \sup_{w \in W'} \frac{|\partial^{\bsnu}_{\bsy} \langle A(\bsy) v, w\rangle_{W'}|}{\|v\|_V \|w\|_{W'}}\\ 
    &=\sup_{v\in V} \sup_{w \in W'} \frac{|\partial^{\bsnu}_{\bsy} \langle v, A(\bsy)^* w\rangle_{V}|}{\|v\|_V \|w\|_{W'}}
    =\sup_{v\in V} \sup_{w \in W'} \frac{|\langle v, \partial^{\bsnu}_{\bsy}A(\bsy)^* w\rangle_{V}|}{\|v\|_V \|w\|_{W'}} \\
    &= \sup_{w\in W'} \frac{\|\partial^{\bsnu}_{\bsy}A(\bsy)^*w\|_{V}}{\|w\|_{W'}}
    = \|\partial^{\bsnu}_{\bsy}A(\bsy)'\|_{\mathcal{L}(W',V)}\,.
\end{align*}
Thus, we obtain
\begin{align*}
\|\partial_{\bsy}^{\bsnu} q(\bsy)\|_{W'} 
&\leq \sum_{\bsm \leq \bsnu, \bsm \neq \bsnu} \binom{\bsnu}{\bsm} \|(A(\bsy)^*)^{-1} \partial_{\bsy}^{\bsnu-\bsm}A(\bsy)^*\|_{\mathcal{L}(W')} \|\partial_{\bsy}^{\bsm} q(\bsy)\|_{W'}\\ 
&\quad+ \|(A(\bsy)^*)^{-1}\|_{\mathcal{L}(V,W')} \|\partial_{\bsy}^{\bsnu} f(\bsy)\|_{V}\\
&\leq \sum_{\bsm \leq \bsnu, \bsm \neq \bsnu} \binom{\bsnu}{\bsm} C \bsb^{\bsnu-\bsm} \|\partial_{\bsy}^{\bsm} q(\bsy)\|_{W'} + C\|\partial_{\bsy}^{\bsnu} f(\bsy)\|_{V}\,.
\end{align*}
where we concluded from Assumption \ref{assump:analytic_op} and \eqref{eq:operator_deriv} that for all $\bsnu \in \calF$
\begin{align*}
\sup_{\bsy \in U}\|(A(\bsy)^*)^{-1} \partial_{\bsy}^{\bsnu} A(\bsy)^*\|_{\mathcal{L}(W')} &\leq \sup_{\bsy \in U} \|(A(\bsy)^*)^{-1} (A(\boldsymbol{0}))^*\|_{\mathcal{L}(W')} \sup_{\bsy \in U} \|(A(\boldsymbol{0})^*)^{-1}\partial_{\bsy}^{\bsnu} A(\bsy)^*\|_{\mathcal{L}(W')} \\
&\leq C \bsb^{\bsnu}\,.
\end{align*}
From Lemma \cite[Lemma 5]{kuo2017multilevel} we conclude that
\begin{align*}
\|\partial_{\bsy}^{\bsnu} q(\bsy)\|_{W'}  \leq C \sum_{\bsk \leq \bsnu} \binom{\bsnu}{\bsk} \frac{|\bsk|!}{(\ln{2})^{|\bsk|}} \bsb^{\bsk} \|\partial_{\bsy}^{\bsnu-\bsk} f(\bsy)\|_{V}\,.
\end{align*}
By the assumption on $f$ we obtain
\begin{align*}
\|\partial_{\bsy}^{\bsnu} q(\bsy)\|_{W'}
&\leq C \sum_{\bsk \leq \bsnu} \binom{\bsnu}{\bsk} \frac{|\bsk|!}{(\ln{2})^{|\bsk|}} \bsb^{\bsk} C (\|z\|_{\calZ} + \|u_0\|_{\frakJ}) \frac{\bsb^{\bsnu-\bsk}}{(\ln{2})^{|\bsnu-\bsk|}} |\bsnu-\bsk|!\\
&= C (\|z\|_{\calZ} + \|u_0\|_{\frakJ}) (|\bsnu| + 1)! \frac{\bsb^{\bsnu} }{(\ln{2})^{|\bsnu|}}\,,
\end{align*}
where $C>0$ is a generic constant that might change within one equation.
\end{proof}
Since $\mathcal{B}$ is a bounded operator we can apply \cite[Theorem 6.2]{GKKSS22} to establish a convergence rate for the error in \eqref{eq:trunc_error}.
\begin{theorem}[{\cite[Theorem 6.2]{GKKSS22}}]\label{thm:dim_trunc}
Let Assumption~\ref{assump:analytic_op} hold, and let $q(\bsy)$ be the solution of \eqref{eq:adjoint_para} for a control $z\in \calZ$ and a given target state $u_0 \in \frakJ$. For a sequence of uniformly distributed parameters $\bsy \in U =
\rev{[-1,1]^{\mathbb{N}}}$, denoting $(\bsy_{\leq s};\boldsymbol{0}) =
(y_1,y_2,\ldots,y_s,0,0,\ldots)$, we have for all $s \in \mathbb{N}$
\begin{align*}
\Big\| \int_U \big(q(\bsy) - q(\bsy_{\leq s};\boldsymbol{0})\big)\,\mathrm d\bsy \Big\|_{W'}
 \le C\,(\|z\|_{\calZ} +\|u_0\|_{\mathfrak{J}})s^{-2/p+1}\,,
\end{align*}
for $C>0$ independent of $s$ and $p$ as in Assumption~\ref{assump:analytic_op}.
\end{theorem}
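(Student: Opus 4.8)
The statement is quoted verbatim from \cite[Theorem 6.2]{GKKSS22}, so the plan is to \emph{verify its hypotheses for the adjoint solution family} and then invoke it. The preceding theorem already supplies what is needed: it shows that $q$ solves a parametric operator equation whose operator family $\{A(\bsy)^*\}$ inherits Assumption~\ref{assump:analytic_op} and whose right-hand side $f(\bsy)=\mathcal Q^*(\mathcal Q A(\bsy)^{-1}\mathcal B z-u_0)$ is analytic with derivative bounds proportional to $\|z\|_\calZ+\|u_0\|_\frakJ$, and it concludes
\[
\sup_{\bsy\in U}\|\partial_\bsy^\bsnu q(\bsy)\|_{W'}\le C(\|z\|_\calZ+\|u_0\|_\frakJ)\,(|\bsnu|+1)!\,\frac{\bsb^\bsnu}{(\ln 2)^{|\bsnu|}},\qquad \bsb\in\ell^p(\mathbb N).
\]
Up to the harmless polynomial factor $|\bsnu|+1$ in front of the classical $p$-analytic growth $|\bsnu|!\,\bsb^\bsnu$, this is exactly the regularity assumed in \cite[Theorem 6.2]{GKKSS22}: the extra factor is absorbed by passing from $\bsb$ to $(1+\delta)\bsb$, which still lies in $\ell^p(\mathbb N)$, so the summability exponent $p$ is unchanged. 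Finally, the quantity controlled in \eqref{eq:trunc_error} is $\mathcal B^*\int_U\big(q(\bsy)-q(\bsy_{\leq s};\bszero)\big)\,\mathrm d\bsy$, and since $\mathcal B^*$ is bounded this reduces to the $W'$-valued statement of the theorem.

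It is worth recalling the mechanism behind the rate $s^{-2/p+1}$, which is what one would reproduce for a self-contained argument. Expand $q$ in its (unconditionally convergent) Taylor series about the origin in the tail variables $(y_j)_{j>s}$, so that
\[
q(\bsy)-q(\bsy_{\leq s};\bszero)=\sum_{\substack{\bsnu\in\calF\setminus\{\bszero\}\\ \operatorname{supp}\bsnu\subseteq\{s+1,s+2,\dots\}}}\frac{1}{\bsnu!}\,\partial_\bsy^\bsnu q(\bsy_{\leq s};\bszero)\,\bsy^\bsnu,
\]
and integrate termwise against the product uniform measure $\mu$ on $U=[-\tfrac12,\tfrac12]^{\mathbb N}$. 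The crucial point is the \emph{symmetry of $\mu$}: $\int_U\bsy^\bsnu\,\mathrm d\mu(\bsy)=\prod_j\int_{-1/2}^{1/2}y^{\nu_j}\,\mathrm dy$ vanishes unless every component $\nu_j$ is even, so all odd-order contributions — in particular the entire first-order part — integrate to zero. Only multi-indices $\bsnu=2\bsmu$ with $\operatorname{supp}\bsmu\subseteq\{s+1,\dots\}$ and $|\bsmu|\ge1$ survive, and using $|\int_U\bsy^{2\bsmu}\,\mathrm d\mu|\le 1$ together with the derivative bound above,
\[
\Big\|\int_U\big(q(\bsy)-q(\bsy_{\leq s};\bszero)\big)\,\mathrm d\bsy\Big\|_{W'}\le C(\|z\|_\calZ+\|u_0\|_\frakJ)\sum_{\substack{\bsmu\in\calF\setminus\{\bszero\}\\ \operatorname{supp}\bsmu\subseteq\{s+1,\dots\}}}C^{|\bsmu|}\,(2|\bsmu|+1)!\,\frac{\bsb^{2\bsmu}}{(\ln 2)^{2|\bsmu|}}.
\]
The $|\bsmu|=1$ terms contribute $\sum_{j>s}b_j^2$; since $\bsb\in\ell^p(\mathbb N)$ is monotonically decreasing one has $b_j\le Cj^{-1/p}$ and hence $\sum_{j>s}b_j^2\le C\sum_{j>s}j^{-2/p}\le C\,s^{-2/p+1}$, with $C$ independent of $s$ and bounded uniformly in $p\in(0,1]$. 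The terms with $|\bsmu|\ge2$ are of the same or higher order in $s$ and, once absolute convergence of the series is secured, are dominated by the leading term.

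The main obstacle — and the reason it is cleanest simply to cite \cite[Theorem 6.2]{GKKSS22} — is precisely this last multi-index bookkeeping: showing that the tail series converges absolutely and uniformly in $s$ (taming the factorial growth $(2|\bsmu|+1)!$ against $\bsb^{2\bsmu}$, which is where the smallness built into $(\bsb,\epsilon)$-holomorphy, i.e.\ a suitably small $\epsilon$, enters) and extracting its leading $s$-dependence $s^{-2/p+1}$ with a constant that does not blow up as $p\to1$. A minor point to double-check along the way is that replacing the classical $|\bsnu|!$ derivative growth by $(|\bsnu|+1)!$ leaves the cited theorem applicable; as noted, the extra polynomial factor is absorbed into a slight enlargement of $\bsb$ and does not affect the exponent $p$.
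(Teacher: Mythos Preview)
Your approach matches the paper's exactly: the paper does not prove this theorem at all but simply cites \cite[Theorem~6.2]{GKKSS22} after establishing the derivative bounds on $q$ in the preceding result, noting only that boundedness of $\mathcal{B}^*$ transfers the $W'$-estimate to the control error in \eqref{eq:trunc_error}. Your proposal is in fact more thorough than the paper, since you explicitly address how the extra factor $(|\bsnu|+1)$ is absorbed and sketch the Taylor-expansion mechanism behind the rate, neither of which the paper spells out.
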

Dimension truncation error rates for Bochner integral quantities are derived in \cite{GK22} for a more general class of distributions.
Since the truncation error can be controlled, this error contribution is not considered in the remaining analysis of this manuscript.

\subsection{Penalty and quadrature error}
\label{sec:consistency2}

In our consistency analysis, we are going to analyse the proposed penalty method, see Section \ref{sec:penalty}, \rev{with respect }to the penalty parameter $\lambda_k$ and the number of i.i.d.~data points $N$, denoted as $(\bsy^i)_{i=1}^N$, which are used to approximate the expected values \rev{with respect }to $\bsy$. Thereby, we assume that the state $u_k\rev{(\bsy)}$ has been parametrized by a surrogate $u(\theta,\bsy)$, see Section \ref{sec:surrogates}, and the penalty parameters $(\lambda_k)_{k\in\mathbb N}$ are monotonically increasing to infinity. In particular, we~\rev{shall}~connect the following optimization problems: 
\begin{enumerate}
\item The original constrained risk minimization (cRM) problem 
\begin{equation*}
\begin{split}
 \min_{z,\theta}\ \frac12\E_\bsy [\|\calQ u(\theta,\bsy)-u_0\|_{\frakJ}^2] + \frac\alpha2 \|z\|_{\calZ}^2&\\
 \text{subjected to}\quad \E_\bsy[\|e(u(\theta,\bsy),z)\|_{W'}^2] = 0.&
 \end{split}
\end{equation*} 
We assume there exists a {unique} solution of this problem, which we will denote by $(z_\infty^\ast, \theta_\infty^\ast)$. 
\item The penalized risk minimization (pRM) problem 
\begin{equation}\label{eq:pRM}
 \min_{z,\theta}\  \frac12\E_\bsy [\|\calQ u(\theta,\bsy)-u_0\|_{\frakJ}^2] + \frac\alpha2 \|z\|_{\calZ}^2 + \frac{\lambda_k}2\E_\bsy [\|e(u(\theta,\bsy),z)\|_{W'}^2].
\end{equation} 
We assume there exists a unique solution denoted by $(z_\infty^k,\theta_\infty^k)$.

\item The penalized empirical risk minimization (pERM) problem
\begin{equation}\label{eq:pERM}
 \min_{z,\theta}\  \frac{1}{2N} \sum_{i=1}^N \|\calQ u(\theta,\bsy^i)-u_0\|_{\frakJ}^2 + \frac\alpha2 \|z\|_{\calZ}^2 + \frac{\lambda_k}2 \frac1N\sum_{i=1}^N \|e(u(\theta,\bsy^i),z)\|_{W'}^2
\end{equation} 
We {assume there exists a unique solution} denoted by $(z_N^k,\theta_N^k)$.
\end{enumerate}

For simplicity, in the following we denote $x=(z,\theta)\in \calX := \calZ\times\R^d$ and define the functions
\begin{align*}
f:\calX \times U \to \R_+,&\quad \text{with}\quad f(x,\bsy) = \frac12\|\calQ u(\theta,\bsy) - u_0\|_{\frakJ}^2+\frac\alpha2\|z\|_{\calZ}^2,\\
g: \calX \times U \to \R_+,&\quad \text{with}\quad g(x,\bsy) = \|e(u(\theta,\bsy),z)\|_{W'}^2,
\end{align*}

where we assume here and in the following that $W'$ and hence also $L^2_{\mu}(U,W')$ are Hilbert spaces.

\subsection{Convergence of pRM to cRM}
We start with the error dependence on the penalty parameter $\lambda_k$. 
The following is a long known result (see e.g., \cite[Theorem 1]{POLYAK19711}) providing unique existence of solutions as well as convergence towards the unconstrained problem for increasing penalty parameter. 
\begin{theorem}\label{thm:penalty_conv}
Let $H_1$ and $H_2$ be two Hilbert spaces and let $\rev{\cJ}(x)$ be a functional on $H_1$ and the constraint $h(x)$ be an operator from $H_1$ into $H_2$. Moreover, suppose 
\begin{itemize}
\item there exists a unique global minimizer $x^\ast \in \calX$ of the problem
\begin{align*}
\min_{x \in \calX} \rev{\cJ}(x) \quad \text{s.t. } h(x) = 0 \text{ in } H_2\,.
\end{align*}
\item that $\nabla_x \rev{\cJ}(x),\nabla_x^2 \rev{\cJ}(x)$ and $\nabla_xh(x),\nabla_x^2h(x)$ exist with 
\begin{align*}
\|\nabla_x^2 \rev{\cJ}(x) - \nabla_x^2 \rev{\cJ}(y)\|_{\mathcal{L}(H_1,\mathcal{L}(H_1,\mathbb{R}))} &\leq L_1 \|x-y\|_{H_1} \\ \quad \text{and} \quad \|\nabla_x^2h(x) - \nabla_x^2h(y)\|_{\mathcal{L}(H_1,\mathcal{L}(H_1,H_2))} &\leq L_2 \|x-y\|_{H_1}\,.
\end{align*}
\item the linear operator $\nabla_xh(x^\ast)$ is non-degenerate, i.e., $\|(\nabla_xh(x^\ast))^\ast y\|_{H_2} \geq c\|y\|_{H_2}$ for $c>0$ and for all $y \in H_2$.
\item the self-adjoint operator $\nabla_x^2L(x^\ast,y^\ast)$ is positive definite, i.e., $\langle \nabla_x^2L(x^\ast,y^\ast) \tilde{x}, \tilde{x}\rangle \geq m \|\tilde{x}\|_{H_1}^2$ for $m>0$ and all $\tilde{x} \in H_1$. Here, the functional $L$ denotes the Lagrangian and the Lagrange multiplier rule is applicable because of the first three assumptions\rev{, i.e., there exists $y^* \in H_2$ such that $L_x(x^*,y^*) = \nabla_x \mathcal{J}(x^*) + (\nabla_x h(x^*))^* y^* = 0$.}
\end{itemize}
Then, for sufficiently large $\lambda_k>0$, there exists a unique minimizer $x_k^\ast$ of the problem
\begin{align*}
\min_{x \in H_1} \rev{\cJ}(x) + \frac{\lambda_k}{2} \|h(x)\|_{H_2}^2
\end{align*}
which satisfies
\begin{align*}
\|x_k^\ast - x^\ast\|_{H_1} \leq \frac{C}{2\lambda_k}\|y^\ast\|_{H_2}
\quad\text{ and }\quad
\|\lambda_k h(x_k^\ast) - y^\ast\|_{H_1} \leq \frac{C}{2\lambda_k}\|y^\ast\|_{H_2}\,.
\end{align*}
\end{theorem}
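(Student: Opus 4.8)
The plan is to recast the penalized stationarity system as a perturbation of the KKT system for the constrained problem and to solve it by a quantitative implicit function argument, following \cite{POLYAK19711}. First I would write down the first-order condition for the penalized objective: any critical point $x$ of $f(x)+\tfrac{\lambda_k}{2}\|h(x)\|_{H_2}^2$ satisfies $\nabla_x f(x) + \lambda_k (\nabla_x h(x))^\ast h(x) = 0$. Introducing the auxiliary multiplier $y := \lambda_k h(x)\in H_2$, this reads $\nabla_x f(x) + (\nabla_x h(x))^\ast y = 0$, i.e.\ $\nabla_x L(x,y)=0$, while the definition of $y$ reads $h(x) - \lambda_k^{-1} y = 0$. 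I therefore define the nonlinear map $\Phi_\lambda(x,y) := \big(\nabla_x f(x) + (\nabla_x h(x))^\ast y,\ h(x) - \lambda^{-1} y\big)$ on $H_1\times H_2$, whose zeros are exactly the (critical point, associated multiplier) pairs of the penalized problem. Since $(x^\ast,y^\ast)$ solves the KKT system, $\Phi_\lambda(x^\ast,y^\ast) = (0,\,-\lambda^{-1} y^\ast)$, a residual of size $\lambda^{-1}\|y^\ast\|_{H_2}$.

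Second, I would prove bounded invertibility of the linearization $D\Phi_\lambda(x^\ast,y^\ast)$, which is the $2\times 2$ block (saddle-point) operator with blocks $\nabla_x^2 L(x^\ast,y^\ast)$, $(\nabla_x h(x^\ast))^\ast$, $\nabla_x h(x^\ast)$ and $-\lambda^{-1}I$. Letting $M$ denote the operator obtained by replacing the lower-right block by $0$, a Schur-complement argument applies: the assumed $m$-positive definiteness of $\nabla_x^2 L(x^\ast,y^\ast)$ makes the $(1,1)$ block boundedly invertible, and the $c$-non-degeneracy of $\nabla_x h(x^\ast)$ then makes the Schur complement $-\nabla_x h(x^\ast)\,(\nabla_x^2 L(x^\ast,y^\ast))^{-1}\,(\nabla_x h(x^\ast))^\ast$ negative definite, hence boundedly invertible; this yields $\|M^{-1}\|\le K$ for a constant $K$ depending only on $m$, $c$ and the (locally uniform) bounds on $\nabla_x f,\nabla_x h,\nabla_x^2 f,\nabla_x^2 h$. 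Since $D\Phi_\lambda(x^\ast,y^\ast)$ differs from $M$ only by the block $-\lambda^{-1}I$, a Neumann series gives $\|(D\Phi_\lambda(x^\ast,y^\ast))^{-1}\|\le 2K$ whenever $\lambda \ge 2K$.

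Third, I would apply the Newton--Kantorovich theorem (equivalently, a quantitative inverse function theorem) at the base point $(x^\ast,y^\ast)$. The Lipschitz hypotheses on $\nabla_x^2 f$ and $\nabla_x^2 h$ make $D\Phi_\lambda$ Lipschitz on a fixed ball $B$ around $(x^\ast,y^\ast)$, with constant controlled by $L_1,L_2$ and $\|y^\ast\|_{H_2}$ (note that the Lipschitz bound must be taken uniform only on $B$, since $(\nabla_x h(x))^\ast y$ is bilinear in the increments of $x$ and $y$); combined with the bound $2K$ on the inverse Jacobian and the residual bound $\lambda^{-1}\|y^\ast\|_{H_2}$, the Newton--Kantorovich criterion holds for $\lambda$ large, producing a unique zero $(x_\lambda^\ast,y_\lambda^\ast)$ of $\Phi_\lambda$ in $B$ with $\|x_\lambda^\ast - x^\ast\|_{H_1} + \|y_\lambda^\ast - y^\ast\|_{H_2} \le 4K\lambda^{-1}\|y^\ast\|_{H_2}$. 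Since $y_\lambda^\ast = \lambda_k h(x_\lambda^\ast)$, absorbing constants into $C$ gives both claimed estimates. To identify $x_\lambda^\ast$ with the unique minimizer, I would observe that the Hessian of the penalized objective at $x_\lambda^\ast$ equals $\nabla_x^2 L(x_\lambda^\ast,y_\lambda^\ast) + \lambda_k (\nabla_x h(x_\lambda^\ast))^\ast \nabla_x h(x_\lambda^\ast)$ (using $h(x_\lambda^\ast)=\lambda_k^{-1}y_\lambda^\ast$ to absorb the curvature term), which is $\ge \tfrac m2 I$ for $\lambda_k$ large by continuity of $\nabla_x^2 L$ near $(x^\ast,y^\ast)$ and positivity of the added term; hence the objective is strongly convex on $B$ and $x_\lambda^\ast$ is its unique critical point there, and a standard argument (any global minimizer of the penalized problem drives $\|h(\cdot)\|$ to $0$ and has objective value $\le f(x^\ast)$, forcing convergence to the unique constrained minimizer $x^\ast$) shows the global minimizer eventually lies in $B$.

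I expect the main obstacle to be exactly this last step — ruling out far-away global minimizers and showing the penalized minimizers enter the neighborhood where the local analysis applies, which in the infinite-dimensional setting requires care with weak compactness or coercivity — together with the bookkeeping of the Lipschitz constants needed to obtain the clean $C/(2\lambda_k)$ form. The saddle-point invertibility and the Newton--Kantorovich estimate itself are, by contrast, routine once the Schur complement is set up.
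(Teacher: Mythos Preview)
The paper does not give its own proof of this theorem; it is quoted verbatim as a classical result, with the reference to \cite[Theorem~1]{POLYAK19711}. Your proposal is a faithful reconstruction of precisely that argument --- rewriting the penalized stationarity condition as a perturbation of the KKT system, inverting the saddle-point linearization via a Schur complement, and closing with a Newton--Kantorovich step --- so there is nothing to compare against and nothing substantive to correct.
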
 
This theorem holds in infinite-dimensional Hilbert spaces $H_1$ and $H_2$, in that case the derivatives \rev{with respect }to $x\in H_1$ in the theorem are Fr\'echet derivatives. For 
{our considered problem at hand with $x = (z,\theta)\in \calX = \calZ\times \R^d$} the assumptions need to be satisfied for $\rev{\cJ}(x) = \frac12\mathbb{E}_{\bsy}[\|\calQ u(\theta,\bsy) - u_0\|_{\frakJ}^2]+\frac\alpha2\|z\|_{\calZ}^2$, $h(x) = e(u(\theta,\bsy,z))$ and $g(x)=\|h(x)\|_{W'}^2$ based on the spaces $H_1 = \calX$ and $H_2 = L^2_\mu(U,W')$. Here we need the assumption that $W'$ is a Hilbert space, such that $L^2_\mu(U,W')$ is a Hilbert space. In this case the $H_1$-norm is the norm on $\mathcal X$, e.g., $\|(z,\theta)\|_{\mathcal X} = (\|z\|_{\calZ}^2+\|\theta\|_2^2)^{1/2}$, 
and the $H_2$-norm is $\|\cdot\|_{L^2_\mu(U,W')} := (\mathbb{E}_{\bsy}[\|\cdot\|_{W'}^2])^{1/2}$. 

If a surrogate satisfies the assumptions of the preceding theorem, the convergence of the minimizers of the pRM problem to the minimizer of the cRM problem is guaranteed.

\begin{lemma}\label{lem:pRM_CRM}
Suppose that $f$ and $g$ satisfy the  assumptions of Theorem~\ref{thm:penalty_conv}. Then the solution of the pRM problem converges to the solution of the cRM problem, in the sense that there exists $C_1>0$ independent of $N$ such that
$\|(z_\infty^k,\theta_\infty^k) - (z_\infty^\ast,\theta_\infty^\ast)\|_{{\mathcal X}}^2 \le \frac{C_1}{\lambda_k^2}.$
\end{lemma}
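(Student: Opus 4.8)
The plan is to recognize Lemma~\ref{lem:pRM_CRM} as a direct specialization of Theorem~\ref{thm:penalty_conv} to the concrete pair (cRM)/(pRM), followed by squaring the first-order bound and checking that the resulting constant does not see the sample size $N$.

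First I would fix the abstract data: take $H_1 = \calX = \calZ\times\R^d$ with $\|(z,\theta)\|_{\calX}^2 = \|z\|_{\calZ}^2 + \|\theta\|_2^2$, and $H_2 = L^2_\mu(U,W')$, which is a Hilbert space since $W'$ is assumed to be one. Put $f(x) = \tfrac12\E_\bsy[\|\calQ u(\theta,\bsy)-u_0\|_{\frakJ}^2] + \tfrac\alpha2\|z\|_{\calZ}^2$ and $h(x) = e(u(\theta,\cdot),z) = \calA u(\theta,\cdot) - \calB z$, viewed as an element of $L^2_\mu(U,W')$; this is well defined because $\calB$ is bounded and $u(\theta,\cdot)\in L^2_\mu(U,V)$ for the surrogate classes of Section~\ref{sec:surrogates}, so $\calA u(\theta,\cdot)\in L^2_\mu(U,W')$. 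Then
\[
\|h(x)\|_{H_2}^2 = \E_\bsy\big[\|e(u(\theta,\bsy),z)\|_{W'}^2\big] = \E_\bsy[g(x,\bsy)],
\]
so the cRM constraint $\E_\bsy[\|e(u(\theta,\bsy),z)\|_{W'}^2]=0$ is exactly $h(x)=0$ in $H_2$, and the objective in \eqref{eq:pRM} is exactly $f(x) + \tfrac{\lambda_k}{2}\|h(x)\|_{H_2}^2$. Consequently the unique constrained minimizer of (cRM) is the point $x^\ast := (z_\infty^\ast,\theta_\infty^\ast)$, with associated Lagrange multiplier $y^\ast\in H_2$.

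Next, the hypothesis that $f$ and $g$ satisfy the assumptions of Theorem~\ref{thm:penalty_conv} means precisely that the four bullet points there hold for this $f$, $h$ on $H_1,H_2$. Hence Theorem~\ref{thm:penalty_conv} applies and gives, for all sufficiently large $\lambda_k$, a unique minimizer $x_k^\ast = (z_\infty^k,\theta_\infty^k)$ of \eqref{eq:pRM} with
\[
\|(z_\infty^k,\theta_\infty^k) - (z_\infty^\ast,\theta_\infty^\ast)\|_{\calX} \le \frac{C}{2\lambda_k}\|y^\ast\|_{H_2}.
\]
Squaring and setting $C_1 := \tfrac14 C^2\|y^\ast\|_{H_2}^2$ yields the stated estimate. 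Finally, $C$ depends only on the Lipschitz constants $L_1,L_2$, the non-degeneracy constant $c$ and the coercivity constant $m$ from Theorem~\ref{thm:penalty_conv}, while $y^\ast$ is the Lagrange multiplier of (cRM); none of these objects involves $N$, which enters only through the empirical averages in \eqref{eq:pERM}. Thus $C_1$ is independent of $N$, as claimed.

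I do not expect a genuine obstacle here, since the heavy lifting — existence, uniqueness, and the $\calO(1/\lambda_k)$ rate — is delegated to Theorem~\ref{thm:penalty_conv}. The only care needed is bookkeeping: confirming that $h$ maps $\calX$ into $L^2_\mu(U,W')$ (boundedness of $\calA,\calB$ plus integrability of the surrogate), and that the identification $\E_\bsy[\|e(u(\theta,\bsy),z)\|_{W'}^2] = \|h(x)\|_{L^2_\mu(U,W')}^2$ makes the constraint sets of the abstract and concrete problems coincide, so that $(z_\infty^\ast,\theta_\infty^\ast)$ is literally the $x^\ast$ of the theorem. Once these identifications are recorded, the bound is immediate.
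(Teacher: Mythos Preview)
Your proposal is correct and is exactly the paper's approach: the paper does not give a separate proof of Lemma~\ref{lem:pRM_CRM} but immediately precedes it with the identifications $H_1=\calX$, $H_2=L^2_\mu(U,W')$, $f(x)=\tfrac12\E_\bsy[\|\calQ u(\theta,\bsy)-u_0\|_\frakJ^2]+\tfrac\alpha2\|z\|_\calZ^2$ and $h(x)=e(u(\theta,\cdot),z)$, and then states the lemma as a direct consequence of Theorem~\ref{thm:penalty_conv}. Your additional bookkeeping (squaring the bound to obtain $C_1=\tfrac14 C^2\|y^\ast\|_{H_2}^2$ and arguing that none of $C$, $y^\ast$ depend on $N$) simply makes explicit what the paper leaves implicit.
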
 

\subsection{Convergence of pERM  to  pRM}

The following result describes the error arising due to the empirical approximation of the risk function. 

\begin{lemma}\label{lem:pERM_pRM}
Suppose that $f$ is strongly convex in the sense that $\nabla_x^2 f(x,\bsy)~\rev{\ge}~m\cdot{\mathrm{id}}$ for all $x\in \calX =\calZ\times \R^d$ and $\bsy\in U$ and 
$g$ is convex.  Let $\lambda_k>0$ and assume that 
\rev{$\Tr(\Cov_\bsy(\nabla_x (f(x_\infty^k,\bsy)+\frac{\lambda_1}2 g(x_\infty^k,\bsy))))<\infty$}.
Then the solution of \eqref{eq:pERM} converges 
to the solution $x_\infty^k=(z_\infty^k,\theta_\infty^k)$ of \eqref{eq:pRM}, in the sense that there exists $C_{\lambda_k}>0$ dependent of $\lambda_k$ 
such that 
\begin{equation*}
\E_\bsy[\|(z_N^k,\theta_N^k) - (z_\infty^k,\theta_\infty^k)\|_{{\mathcal X}}^2] \le \frac{C_{\lambda_k}}{N}.
\end{equation*}
\end{lemma}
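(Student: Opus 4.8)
The plan is to view both minimizers as solutions of (strongly monotone) optimality systems and to quantify how far the empirical system is perturbed from the population one. Write $F_k(x) := \E_\bsy[\nabla_x(f(x,\bsy)+\tfrac{\lambda_k}{2}g(x,\bsy))]$ for the gradient of the pRM objective, and $F_{k,N}(x) := \tfrac1N\sum_{i=1}^N \nabla_x(f(x,\bsy^i)+\tfrac{\lambda_k}{2}g(x,\bsy^i))$ for the gradient of the pERM objective. By the hypotheses, $f(\cdot,\bsy)$ is $m$-strongly convex uniformly in $\bsy$ and $g(\cdot,\bsy)$ is convex, so the pRM objective is $m$-strongly convex and the pERM objective is $m$-strongly convex as well (for every realization of the samples); in particular $F_k$ and each $F_{k,N}$ are strongly monotone with modulus $m$, and the minimizers $(z_\infty^k,\theta_\infty^k)$ and $(z_N^k,\theta_N^k)$ are the unique zeros of $F_k$ and $F_{k,N}$ respectively, characterized by the variational inequalities $\langle F_k(x_\infty^k), x - x_\infty^k\rangle \ge 0$ and $\langle F_{k,N}(x_N^k), x - x_N^k\rangle \ge 0$ for all $x\in\calX$ (note $\calX$ is the whole Hilbert space here, so these are equalities if $\calZ$ is unconstrained, but the inequality form suffices).

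First I would run the standard strong-monotonicity comparison. Testing the pRM optimality condition at $x = x_N^k$ and the pERM optimality condition at $x = x_\infty^k$ and adding, one gets $\langle F_k(x_\infty^k) - F_{k,N}(x_N^k), x_\infty^k - x_N^k\rangle \ge 0$. Inserting $\pm F_{k,N}(x_\infty^k)$ and using $m$-strong monotonicity of $F_{k,N}$,
\begin{equation*}
m\,\|x_N^k - x_\infty^k\|_{\calX}^2 \le \langle F_{k,N}(x_\infty^k) - F_k(x_\infty^k),\, x_N^k - x_\infty^k\rangle \le \|F_{k,N}(x_\infty^k) - F_k(x_\infty^k)\|_{\calX}\,\|x_N^k - x_\infty^k\|_{\calX}\,,
\end{equation*}
hence $\|x_N^k - x_\infty^k\|_{\calX} \le m^{-1}\|F_{k,N}(x_\infty^k) - F_k(x_\infty^k)\|_{\calX}$, and therefore $\E_\bsy[\|x_N^k - x_\infty^k\|_{\calX}^2] \le m^{-2}\,\E_\bsy[\|F_{k,N}(x_\infty^k) - F_k(x_\infty^k)\|_{\calX}^2]$. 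Now the right-hand side is a Monte Carlo error at the \emph{fixed} point $x_\infty^k$: the summands $\xi_i := \nabla_x(f(x_\infty^k,\bsy^i)+\tfrac{\lambda_k}{2}g(x_\infty^k,\bsy^i))$ are i.i.d.\ with mean $F_k(x_\infty^k)$, so $\E_\bsy[\|F_{k,N}(x_\infty^k) - F_k(x_\infty^k)\|_{\calX}^2] = \tfrac1N \Tr\big(\Cov_\bsy(\nabla_x(f(x_\infty^k,\bsy)+\tfrac{\lambda_k}{2}g(x_\infty^k,\bsy)))\big)$, which is finite by assumption. Setting $C_{\lambda_k} := m^{-2}\,\Tr\big(\Cov_\bsy(\nabla_x(f(x_\infty^k,\bsy)+\tfrac{\lambda_k}{2}g(x_\infty^k,\bsy)))\big)$ gives the claim.

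The main obstacle is making the Monte Carlo variance identity rigorous in the Hilbert-space setting: one must argue that $\nabla_x(f(x_\infty^k,\cdot)+\tfrac{\lambda_k}{2}g(x_\infty^k,\cdot))$ is a Bochner-integrable $\calX$-valued random element with finite second moment (so that the mean, covariance trace, and the parallelogram-type identity $\E\|\tfrac1N\sum(\xi_i-\E\xi_i)\|^2 = \tfrac1N\E\|\xi_1-\E\xi_1\|^2$ all make sense), which is exactly what the hypothesis $\Tr(\Cov_\bsy(\cdots))<\infty$ is there to guarantee. A secondary technical point is the differentiability/measurability of $x\mapsto f(x,\bsy), g(x,\bsy)$ needed to write the optimality conditions as gradient equations; this follows from the surrogate being smooth in $\theta$ together with the bounded-operator assumptions on $\calQ,\calB,\calA$ made in Section~\ref{sec:mathset}, and the convexity assumptions in the statement. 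Everything else is the routine strong-monotonicity estimate above. Note the constant $C_{\lambda_k}$ genuinely depends on $\lambda_k$ (and on $x_\infty^k$, which itself moves with $k$), consistent with the statement; tracking its growth in $\lambda_k$ is deferred to the combined estimate later in the section.
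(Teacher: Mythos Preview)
Your argument is correct and follows essentially the same route as the paper: strong convexity (monotonicity) of the empirical objective, stationarity of the two minimizers, Cauchy--Schwarz, and then the i.i.d.\ variance identity $\E\|\tfrac1N\sum_i(\xi_i-\E\xi)\|_{\calX}^2=\tfrac1N\Tr(\Cov(\xi))$ evaluated at the fixed point $x_\infty^k$. The only substantive addition in the paper's proof is that it goes one step further and makes the $\lambda_k$-dependence of $C_{\lambda_k}$ explicit, bounding $\Tr(\Cov(\nabla_x\psi^k))\le \max\{1,\lambda_k^2/4\}\,\Tr(\Cov(\nabla_x\psi^0))$ so that $C_{\lambda_k}=m^{-2}\max\{1,\lambda_k^2/4\}$ times a $k$-independent trace; you correctly note that this tracking is what feeds into Theorem~\ref{thm:consistency}, but as written your constant still depends on $x_\infty^k$ and would need that extra step before it can be combined with Lemma~\ref{lem:pRM_CRM}. (Minor point: in your displayed inequality the sign of $F_{k,N}(x_\infty^k)-F_k(x_\infty^k)$ is flipped relative to what the monotonicity step actually gives, but this is harmless once you pass to norms via Cauchy--Schwarz.)
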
 
\begin{remark}
    \rev{Note that $\Cov_\bsy$ denotes the multivariate covariance operator with respect to the random variable $\bsy$. This is, for two measurable functions $h_1:U\to \mathcal X$, $h_2:U\to\mathcal X$, the covariance operator is defined by
    \[ \Cov_\bsy (h_1(\bsy),h_2(\bsy)) := \E_\bsy\left[(h_1(\bsy)-\E_\bsy[h_1(\bsy)])\otimes (h_2(\bsy)-\E_\bsy[h_2(\bsy)])\right]\,,\]
    and for simplicity we define
    \[ \Cov_\bsy (h_1(\bsy)):=\Cov_\bsy (h_1(\bsy),h_1(\bsy))\,.\]
    In the following, we will make use of the property
    \begin{align*}
        \Tr(\Cov_\bsy(h(\bsy))) &= \E_\bsy\left[\Tr\left((h(\bsy)-\E_\bsy[h(\bsy)])\otimes(h(\bsy)-\E_\bsy[h(\bsy)])\right)\right]\\ &= \E_\bsy\left[\|h(\bsy)-\E_\bsy[h(\bsy)]\|_{\mathcal X}^2 \right]\,,
    \end{align*}
    where we have used that the trace operator is linear. 
    }
\end{remark}
\begin{proof}
Under the above assumption the objective function in \eqref{eq:pERM} is strongly convex. The unique solution $x_N^k$ satisfies \[ \frac1N\sum_{i=1}^N \nabla_x f(x_N^k,\bsy^i) + \frac{\lambda_k}2 \frac1N\sum_{i=1}^N \nabla_x g(x_N^k,\bsy^i) 
= 0.\]

Similarly, the unique \rev{minimizer} of \eqref{eq:pRM} is characterized by
\[ \nabla_x\E_\bsy[f(x_\infty^k,\bsy)] + \frac{\lambda_k}2 \nabla_x\E_\bsy[g(x_\infty^k,\bsy)] 
= 0.\]
We are now interested in the discrepancy of $x_N^k$ and $x_\infty^k$.  We define the (empirical) risk functions 
\begin{align*} 
\Psi^k(x) &= \E_\bsy[f(x,\bsy)] + \frac{\lambda_k}2  \E_\bsy[g(x,\bsy)] 
,\quad
\Psi_N^k(x) = \frac1N\sum_{i=1}^N f(x,\bsy^i) + \frac{\lambda_k}2  \frac1N\sum_{i=1}^N g(x,\bsy^i). 
\end{align*}

By the strong convexity of $\Psi_N^k$ it follows that
\begin{align*}
\|x_N^k - x_\infty^k\|_{{\mathcal X}}^2 &\le \frac{1}{m} \langle x_N^k-x_\infty^k, {\nabla_x\Psi_N^k(x_N^k )}- \nabla_x \Psi_N^k(x_\infty^k)\rangle_{{\mathcal X}}\\ &=  \frac{1}{m}\langle x_N^k-x_\infty^k, \nabla_x\Psi^k(x_\infty^k )- \nabla_x\Psi_N^k(x_\infty^k)\rangle_{{\mathcal X}}
\end{align*}
using the stationarity of the minimizers.
Applying the Cauchy--Schwarz inequality leads to
$$ \|x_N^k - x_\infty^k\|_{{\mathcal X}} \le \frac{1}{m}\|\nabla\Psi_N^k(x_\infty^k)-\nabla_x\Psi^k(x_\infty^k )\|_{{\mathcal X}}.$$
Next, we note that for $\psi^k(x):=f(x,\bsy)+\frac{\lambda_k}2g(x,\bsy)$ {there holds}
\begin{equation*}
\begin{split}
\|\nabla\Psi_N^k(x_\infty^k)-\nabla_x\Psi^k(x_\infty^k )\|_{{\mathcal X}}^2 = \Tr\bigg( &\bigg(\frac1N\sum_{i=1}^N {\nabla_x}\psi^k(x_\infty^k,\bsy^i) - \E_\bsy[{\nabla_x}\psi^k(x_\infty^k,\bsy)]\bigg)\\ \quad &{\otimes} \bigg(\frac1N\sum_{i=1}^N {\nabla_x}\psi^k(x_\infty^k,\bsy^i) - \E_\bsy[{\nabla_x}\psi^k(x_\infty^k,\bsy)]\bigg)\bigg)
\end{split}
\end{equation*}
and by taking the expectation
$$ \E[\|\nabla\Psi_N^k(x_\infty^k)-\nabla_x\Psi^k(x_\infty^k )\|_{{\mathcal X}}^2] = \frac1N\Tr(\rev{\Cov_\bsy}(\nabla_x \psi^k(x_\infty^k,\bsy))).$$
{There holds
\begin{align*} \Tr(\rev{\Cov_\bsy}(\nabla_x \psi^k(x,\bsy)))
&= \Tr(\rev{\Cov_\bsy}(\nabla_x f(x,\bsy)+\frac{\lambda_k}{2}\nabla_x g(x,\bsy) ))\\
&=\Tr(\rev{\Cov_\bsy}(\nabla_x f(x,\bsy))+\lambda_k \rev{\Cov_\bsy}(\nabla_x f(x,\bsy), \nabla_x g(x,\bsy))\\\
&\quad+ \frac{\lambda_k^2}{4}  \rev{\Cov_\bsy}(\nabla_x g(x,\bsy))\\
&\le \Tr(\max\{1,\frac{\lambda_k^2}{4}\} (\rev{\Cov_\bsy}(\nabla_x f(x,\bsy))+ 2\rev{\Cov_\bsy}(\nabla_x f(x,\bsy), \nabla_x g(x,\bsy))\\
&\quad+ \rev{\Cov_\bsy}(\nabla_x g(x,\bsy))\\
&=\max\{1,\frac{\lambda_k^2}{4}\}  \Tr(\rev{\Cov_\bsy}(\nabla_x \psi^{\rev{1}}(x,\bsy)))\, .
\end{align*}}
Finally, we obtain the bound
\begin{equation*}
\E[ \|x_N^k - x_\infty^k\|_{{\mathcal X}}^2]\le C_{\lambda_k}\frac1N\, , 
\end{equation*}
with $C_{\lambda_k} := \frac{\max\{1,\frac{\lambda_k^2}{4}\}\rev{\Tr(\Cov_\bsy(\nabla_x (f(x_\infty^k,\bsy)+\frac{\lambda_1}2 g(x_\infty^k,\bsy))))}}{m^2}$.
\end{proof}

The proof above reveals the dependence of the variance on $\lambda_k$. \rev{Thus, w}e expect that the number of data points needs to increase in order to counteract this effect, cp.~Theorem \ref{thm:consistency}.
\subsection{Convergence of pERM to cRM}
Finally, we are ready to prove consistency in the sense that solutions of the pERM converge to solutions of the original cRM. We can use Lemma~\ref{lem:pERM_pRM} and Lemma~\ref{lem:pRM_CRM} by applying
\begin{equation*}
\E[\| (z_N^k, \theta_N^k)-(z_\infty^\ast,\theta_\infty^\ast)\|_{{\mathcal X}}^2] \le 2\underbrace{\E[\| (z_N^k, \theta_N^k)-(z_\infty^k,\theta_\infty^k)\|_{{\mathcal X}}^2]}_{\text{pERM to pRM}} + 2 \underbrace{\|(z_\infty^k,\theta_\infty^k)-(z_\infty^\ast,\theta_\infty^\ast)\|_{{\mathcal X}}^2}_{\text{pRM to cRM}}.
\end{equation*}

\begin{theorem}\label{thm:consistency}
Suppose $f$ and $g$ satisfy the assumptions of Lemma~\ref{lem:pRM_CRM} and Lemma~\ref{lem:pERM_pRM}. Then the solution $(z_N^k,\theta_N^k)$ is consistent in the sense that there exists $C_1,C_2>0$ such that
\begin{equation*}
\E[\|(z_N^k,\theta_N^k) - (z_\infty^\ast,\theta_\infty^\ast)\|_{{\mathcal X}}^2]\le \frac{C_1}{\lambda_k^2}+\frac{C_2\cdot \lambda_k^2}{N}.
\end{equation*}
Moreover, for the choice $\lambda_k = \lambda(N) = N^{\frac14}$ the solution $(z_N^k,\theta_N^k)$ is consistent with 
\begin{equation*}
\E[\|(z_N^k,\theta_N^k) - (z_\infty^\ast,\theta_\infty^\ast)\|_{{\mathcal X}}^2]\le \frac{C}{\sqrt{N}},
\end{equation*}
for some constant $C>0$.
\end{theorem}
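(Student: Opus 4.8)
The plan is to derive the first displayed bound directly from the triangle-type splitting already stated just before the theorem, and then optimize over $\lambda_k$ to obtain the rate. First I would invoke the inequality
\begin{equation*}
\E[\| (z_N^k, \theta_N^k)-(z_\infty^\ast,\theta_\infty^\ast)\|_{{\mathcal X}}^2] \le 2\,\E[\| (z_N^k, \theta_N^k)-(z_\infty^k,\theta_\infty^k)\|_{{\mathcal X}}^2] + 2\,\|(z_\infty^k,\theta_\infty^k)-(z_\infty^\ast,\theta_\infty^\ast)\|_{{\mathcal X}}^2,
\end{equation*}
which is just $\|a-c\|^2 \le 2\|a-b\|^2 + 2\|b-c\|^2$. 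The second term is bounded by Lemma~\ref{lem:pRM_CRM}, giving $\le 2C_1/\lambda_k^2$. For the first term I would apply Lemma~\ref{lem:pERM_pRM}, but I need to make the $\lambda_k$-dependence of its constant explicit: from the proof of that lemma, $C_{\lambda_k} = \max\{1,\lambda_k^2/4\}\,\Tr(\Cov(\nabla_x\psi^0(x_\infty^k,\bsy)))/m^2$, which for $\lambda_k \ge 2$ is of order $\lambda_k^2$. Hence the first term is $\le 2C_{\lambda_k}/N \le (C_2'\,\lambda_k^2)/N$ for a constant $C_2'$ absorbing $1/(4m^2)$ and a uniform-in-$k$ bound on $\Tr(\Cov(\nabla_x\psi^0(x_\infty^k,\bsy)))$. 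Relabeling the constants yields the claimed bound $C_1/\lambda_k^2 + C_2\lambda_k^2/N$.

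For the second assertion I would simply substitute $\lambda_k = N^{1/4}$, so that $\lambda_k^2 = N^{1/2}$ and both terms become $C_1 N^{-1/2}$ and $C_2 N^{1/2}/N = C_2 N^{-1/2}$; setting $C := C_1 + C_2$ gives the stated $C/\sqrt N$ rate. One could remark that $\lambda(N) = N^{1/4}$ is the minimizer of $t \mapsto C_1 t^{-2} + C_2 t^2/N$ over $t>0$ up to constants, which explains the choice.

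The main obstacle is the uniformity of the constant $C_2$ in $k$: Lemma~\ref{lem:pERM_pRM} only gives a bound with a constant that a priori depends on the evaluation point $x_\infty^k$ through $\Tr(\Cov(\nabla_x\psi^0(x_\infty^k,\bsy)))$, and $x_\infty^k$ itself varies with $k$ (and in the coupled regime, with $N$). To make the final rate honest one needs that this trace is uniformly bounded over the relevant family of points $\{x_\infty^k\}_k$ — for instance because $x_\infty^k \to x_\infty^\ast$ by Lemma~\ref{lem:pRM_CRM} and $x \mapsto \Tr(\Cov(\nabla_x\psi^0(x,\bsy)))$ is continuous (and so bounded on a neighborhood of the limit), or because one assumes a global bound $\sup_{x\in\calX}\Tr(\Cov(\nabla_x f(x,\bsy))) < \infty$ and similarly for $g$. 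I would state this as the additional (mild) hypothesis under which $C_2$ is genuinely $N$- and $k$-independent, and otherwise the argument is a routine combination of the two lemmas and an elementary AM–GM-type balancing of the two error contributions.
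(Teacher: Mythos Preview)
Your proposal is correct and matches the paper's approach exactly: the paper states the theorem immediately after the splitting
\[
\E[\| (z_N^k, \theta_N^k)-(z_\infty^\ast,\theta_\infty^\ast)\|_{{\mathcal X}}^2] \le 2\,\E[\| (z_N^k, \theta_N^k)-(z_\infty^k,\theta_\infty^k)\|_{{\mathcal X}}^2] + 2\,\|(z_\infty^k,\theta_\infty^k)-(z_\infty^\ast,\theta_\infty^\ast)\|_{{\mathcal X}}^2
\]
and treats it as an immediate corollary of the two lemmas, giving no further argument. Your observation about the uniformity of $C_2$---that one needs $\Tr(\Cov(\nabla_x\psi^0(x_\infty^k,\bsy)))$ bounded independently of $k$---is a genuine subtlety that the paper passes over silently; you are right to flag it and your suggested fixes (continuity plus $x_\infty^k\to x_\infty^\ast$, or a global variance bound) are the natural ways to close it.
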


For a surrogate that is linear in its parameters, i.e., $u(\theta,\rev{\bsy}) = B\rev{(\bsy)} \theta$, the first assumption of Theorem \ref{thm:penalty_conv} (and thus Lemma \ref{lem:pRM_CRM}) follows from the strict convexity of $f$. The second assumption is clearly satisfied since for a linear surrogate,~\rev{as}~the constraint $h$ is linear and hence the objective $f$ is quadratic. The third assumption is true if we have for all $y\in L^2_\mu(U,W')$ that
\begin{align*}
    \mathbb{E}[\|(\nabla_{(\theta,z)} h(\theta^\ast,z^\ast))^\ast y\|^2_{\calX}]
    &\geq c \|y\|^2_{L^2_\mu(U,W')}\,.
\end{align*}
In the setting with linear surrogate the operator $(\nabla_{(\theta,z)} h(\theta^\ast,z^\ast))^\ast: L^2_\mu(U,W') \to \calX$ simplifies to $((\calA(\bsy) B\rev{(\bsy)})^\ast, -\calB^\ast)^\top$, such that we have $\mathbb{E}[\|((\calA\rev{(\bsy)} B\rev{(\bsy)})^\ast, -\calB^\ast)^\top y\|_\calX^2] = \mathbb{E}[\|(\calA\rev{(\bsy)} B\rev{(\bsy)})^\ast y\|_{\mathbb{R}^d}^2 + \|\calB^\ast y\|_{\calZ}^2] \geq \mathbb{E}[(a_{\min}^2 \sigma_{\min}(B\rev{(\bsy)} (B\rev{(\bsy)})^*) + \sigma_{\min}(\calB\calB^*)) \|y\|_{L_{\mu}^2(U,W')}^2]$. Here $\sigma_{\min}(\calB\calB^*)>0$ since $\calB$ has bounded inverse. 
Further,~\rev{due to} the linearity of the constraint, the Hessian of the Lagrangian simplifies to the Hessian of the objective function~$\nabla_{\rev{(\theta,z)}}^2f(\theta,z)= \diag{(\mathbb{E}[(B\rev{(\bsy)}){^*} B\rev{(\bsy)}], \alpha \cdot \calB^*\calB)}$. The fourth condition is thus satisfied if $\alpha >0$ and $\sigma_{\min}(\mathbb{E}[(B\rev{(\bsy)}){^*} B\rev{(\bsy)}]) \geq M$ for some $M>0$. If $\sigma_{\min}(\mathbb{E}[(B\rev{(\bsy)}){^*} B\rev{(\bsy)}]) = 0$ the fourth condition can still be satisfied by introducing a quadratic penalty on the surrogate parameters in the objective function, cp.~\eqref{eq:pERM}.

\section{Stochastic gradient descent for pRM problems}\label{sec:SGD}

In order to solve the pRM problem we propose to apply the stochastic gradient descent (SGD) method. This means, instead of solving the pERM problem offline for large but fixed number of data $N$, we solve the pRM online. \rev{W}e further propose to adaptively increase the penalty parameter $\lambda_k$ within the SGD. 

We firstly formulate a general convergence result for the penalized SGD method, which we then apply in order to verify the convergence in the setting of our PDE-constrained optimization problem given by the cRM problem.

\begin{algorithm}
\caption{Penalized stochastic gradient descent method with adaptive penalty parameter.}\label{alg:pSGD}
\begin{algorithmic}[1]
\Require $x_0$, $\beta = (\beta_k)_{k=1}^n$, $(\lambda_k)_{k=1}^n$, i.i.d.~sample $(\bsy^k)_{k=1}^n \sim\bsy.$ 
\For{$k=0,1,\dots,n-1$}
  \State $x_{k+1} = x_k - \beta_k \nabla_x [f(x_k,\bsy^k) + \lambda_k g(x_k,\bsy^k)]$
\EndFor
\end{algorithmic}
\end{algorithm}

The sequence of step sizes $\beta$ is assumed to satisfy the Robbins-Monro condition
\begin{equation}\label{eq:RM}
\sum_{\rev{j=0}}^\infty \beta_k = \infty,\quad \sum_{\rev{j=0}}^\infty \beta_k^2 <\infty,
\end{equation}
which means that $\beta_k$ converges to zero, but not too fast \cite{RM51}.  In the following theorem we present sufficient conditions under which the resulting estimate $x_n$ from Algorithm~\ref{alg:pSGD} converges to the solution of the pRM with penalty parameter choice $\bar \lambda\gg0$, i.e., to
\begin{equation*}
x^\ast \in \argmin_{x\in\R^n\times\R^d}\ \Psi_{\bar\lambda}(x),\quad \Psi_{\bar\lambda}(x):=\E_\bsy[f(x,\bsy) + \frac{\bar\lambda}2 g(x,\bsy)].
\end{equation*}
Due to Lemma~\ref{lem:pRM_CRM} the resulting limit point will be close \rev{to} the optimal solution of the original cRM.
\begin{theorem}\label{thm:SGDconvergence}
We assume that the objective function satisfies
\begin{equation}\label{eq:convexity_ass}
{\langle x-x^\ast, \nabla_x \Psi_{\bar\lambda}(x)\rangle_{\mathcal X}>c\|x-x^\ast\|_{\mathcal X}^2}
\end{equation}
for all {$x\in \mathcal X$} 
and some $c>0$ and that for each $\lambda_k$ we have
\begin{equation}\label{eq:variance_bound}
{\E_\bsy[\|\nabla_x [f(x,\bsy) + \lambda_k g(x,\bsy)\rev{]}\|_{\calX}^2] < {a_k + b_k \|x-x^\ast\|_{\mathcal X}^2}},
\end{equation}
where $(a_k)$ and $(b_k)$ are monotonically increasing with $a_0,b_0>0$ and $a_k\le \bar a,\ b_k\le \bar b$.
Furthermore, we assume that the discrepancy of the penalized stochastic gradients can be bounded {locally} by
\begin{equation}\label{eq:conv_grad}
{\sup_{x\in \mathcal X,\ \|x\|_{\mathcal X}\le R}}\ \|\E_\bsy[(\lambda_k-\bar\lambda) \nabla_x g(x,\bsy)]\|_{\calX}^2 \le \kappa_1(R) |\lambda_k - \bar\lambda|^2,
\end{equation}
for some $\kappa_1(R)>0$, $R>0$.  Suppose that $|\lambda_k-\bar\lambda|^2$ is monotonically decreasing and {$\beta_k\le c/{b_k}$}, 
then it holds true that
\begin{equation*}
\E[{{\mathds{1}_{\rev{\{\|x_j\|_{\mathcal X}\le R,\ j\le k\}}}}} \|x_k-x^\ast\|_{\mathcal X}^2] \le \left(\E[\|x_0-x^\ast\|_{\mathcal X}^2 + 2\bar a\sum_{j=1}^\infty \beta_j^2\right) C_n + {\frac{2\kappa_1(R)}{c^2}}|\lambda_0-\bar\lambda|^2,
\end{equation*}
with
\begin{equation*}
C_n := \min_{k\le n}\max \{ \prod_{j=k+1}^n (1-c\beta_j),\frac{\bar a}{c}\beta_k\}
\end{equation*}
converging to zero for $n\to\infty$. Further,  for an adaptive choice of the penalty parameter $\lambda_k$ such that ${\frac{2\kappa_1(R)}{c^2}} |\lambda_k-\bar\lambda|^2 \le D\beta_k$ we obtain
\begin{equation*}
\E[{{\mathds{1}_{\rev{\{\|x_j\|_{\mathcal X}\le R,\ j\le k\}}}}}\|x_k-x^\ast\|_{\mathcal X}^2] \le \left(\E[\|x_0-x^\ast\|_{\mathcal X}^2 + 2(\bar a+\frac{D}{c})\sum_{j=1}^\infty \beta_j^2\right) C_n.
\end{equation*}

\end{theorem}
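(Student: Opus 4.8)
The plan is to run the standard SGD one-step contraction argument, but carefully isolating the bias introduced by using $\lambda_k$ instead of the target $\bar\lambda$ in the stochastic gradient. First I would expand the squared distance after one iteration of Algorithm~\ref{alg:pSGD}: writing $G_k := \nabla_x[f(x_k,\bsy^k)+\lambda_k g(x_k,\bsy^k)]$, we have
\begin{equation*}
\|x_{k+1}-x^\ast\|_{\mathcal X}^2 = \|x_k-x^\ast\|_{\mathcal X}^2 - 2\beta_k\langle x_k-x^\ast, G_k\rangle_{\mathcal X} + \beta_k^2\|G_k\|_{\mathcal X}^2.
\end{equation*}
Taking the conditional expectation given the $\sigma$-algebra $\mathcal F_k$ generated by $x_0,\bsy^0,\dots,\bsy^{k-1}$, the gradient term splits as $\E_\bsy[G_k\mid\mathcal F_k] = \nabla_x\Psi_{\bar\lambda}(x_k) + \E_\bsy[(\lambda_k-\bar\lambda)\nabla_x g(x_k,\bsy)]$. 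On the event $\{\|x_k\|_{\mathcal X}\le R\}$ I would bound the cross term: the main part is controlled below by $c\|x_k-x^\ast\|_{\mathcal X}^2$ via the strong-monotonicity assumption \eqref{eq:convexity_ass}, while the bias part is handled by Cauchy--Schwarz together with \eqref{eq:conv_grad}, using Young's inequality $2\beta_k\langle x_k-x^\ast, b_k\rangle \le c\beta_k\|x_k-x^\ast\|_{\mathcal X}^2/2 + (2\beta_k/c)\|b_k\|_{\mathcal X}^2$ so that half of the contraction is spent absorbing the bias. The second-moment term $\beta_k^2\E_\bsy[\|G_k\|_{\mathcal X}^2\mid\mathcal F_k]$ is bounded by $\beta_k^2(a_k+b_k\|x_k-x^\ast\|_{\mathcal X}^2)$ from \eqref{eq:variance_bound}; the assumption $\beta_k\le c/b_k$ ensures $\beta_k^2 b_k\le c\beta_k$, so this term is also absorbed into the remaining contraction budget.

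Combining these and multiplying through by the indicator $\mathds{1}_{\{\|x_k\|_{\mathcal X}\le R\}}$ (whose role is precisely to make \eqref{eq:conv_grad} applicable), one arrives at a recursion of the schematic form
\begin{equation*}
\E[\mathds{1}_{\{\|x_{k+1}\|\le R\}}\|x_{k+1}-x^\ast\|_{\mathcal X}^2] \le (1-c\beta_k)\,\E[\mathds{1}_{\{\|x_k\|\le R\}}\|x_k-x^\ast\|_{\mathcal X}^2] + \bar a\beta_k^2 + \tfrac{2\kappa_1(R)}{c}\beta_k|\lambda_k-\bar\lambda|^2.
\end{equation*}
(Here I am being slightly cavalier about the indicator at step $k+1$ versus step $k$; one either argues monotonicity of the stopped process or, more cleanly, carries the indicator $\mathds{1}_{\{\|x_k\|\le R\}}$ throughout and notes $\mathds{1}_{\{\|x_{k+1}\|\le R\}}\le 1$ on the relevant event — this bookkeeping is the one genuinely delicate point and I would address it by defining a stopping time $\tau_R$ and running the argument for $k<\tau_R$.) Next I would unroll this linear recursion from $0$ to $n$. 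The homogeneous part contributes $\prod_{j=1}^n(1-c\beta_j)$ times the initial term; the $\bar a\beta_j^2$ terms, after using $\sum\beta_j^2<\infty$ and a standard splitting of the sum at an index $k$, contribute $2\bar a(\sum_j\beta_j^2)\,C_n$ with $C_n=\min_{k\le n}\max\{\prod_{j=k+1}^n(1-c\beta_j),\tfrac{\bar a}{c}\beta_k\}$ — this is the classical Robbins--Monro bookkeeping (e.g.\ as in the analysis of SGD with diminishing steps); and the bias terms, since $|\lambda_k-\bar\lambda|^2$ is monotonically decreasing, are summable against $\prod_{j}(1-c\beta_j)$ and telescope to give the stated $\tfrac{2\kappa_1(R)}{c^2}|\lambda_0-\bar\lambda|^2$ additive constant. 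That $C_n\to0$ follows because for any fixed $k$ the product $\prod_{j=k+1}^n(1-c\beta_j)\to0$ as $n\to\infty$ (since $\sum\beta_j=\infty$), while for $n$ large one may instead choose $k=k(n)\to\infty$ slowly so that $\tfrac{\bar a}{c}\beta_{k(n)}\to0$ as well; taking the min over $k$ yields a null sequence.

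For the adaptive-$\lambda_k$ statement, I would simply substitute the hypothesis $\tfrac{2\kappa_1(R)}{c^2}|\lambda_k-\bar\lambda|^2\le D\beta_k$ into the bias term of the recursion \emph{before} unrolling: the bias contribution $\tfrac{2\kappa_1(R)}{c}\beta_k|\lambda_k-\bar\lambda|^2$ is then at most $\tfrac{D}{c}\beta_k\cdot c\beta_k \le \tfrac{D}{c}\beta_k^2$ (more precisely $\tfrac{2\kappa_1(R)}{c}\beta_k|\lambda_k-\bar\lambda|^2\le \tfrac{D}{c}\beta_k^2$ after pulling a factor $c\beta_k$ into $D\beta_k$), which merges with the $\bar a\beta_j^2$ series; re-running the same unrolling gives the bound with $\bar a$ replaced by $\bar a+\tfrac{D}{c}$ and no residual additive constant. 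The main obstacle, as flagged, is not any single estimate but the careful handling of the indicator/localization: the monotonicity assumptions on $(a_k),(b_k),|\lambda_k-\bar\lambda|^2$ are exactly what make the unrolled sums behave, and the honest way to deal with $\mathds{1}_{\{\|x_k\|\le R\}}$ is via a stopping-time truncation, after which everything reduces to the familiar diminishing-step-size SGD recursion.
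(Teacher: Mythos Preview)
Your proposal is correct and follows essentially the same route as the paper: decompose the stochastic gradient into the target gradient $\nabla_x\Psi_{\bar\lambda}(x_k)$, a deterministic bias $\delta_k=(\bar\lambda-\lambda_k)\E_\bsy[\nabla_x g(x_k,\bsy)]$, and centered noise $\xi_k$; use \eqref{eq:convexity_ass} for the contraction, Young's inequality with $\varepsilon=c/2$ to absorb the bias via \eqref{eq:conv_grad}, the step-size condition to absorb the $b_k$-part of \eqref{eq:variance_bound}, and then unroll the resulting recursion via discrete Gronwall, treating the two penalty regimes exactly as you describe. The only notable difference is your handling of the indicator: you propose a stopping-time truncation, whereas the paper simply writes $\E[\mathds{1}_R^{k+1}\|\Delta_{k+1}\|^2\mid\mathcal F_k]\le \mathds{1}_R^k(\cdots)$ and then, in a remark following the proof, justifies this by replacing the raw iteration with its projection onto $B_R$ (nonexpansiveness of the projection makes the bound rigorous); both fixes are standard and equivalent in effect.
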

\begin{proof} 
We denote $\Delta_{k} = x_k-x^\ast$ and write the SGD update step as
\begin{equation*}
\Delta_{k+1} = \Delta_k -\beta_k  \nabla_x \Psi_{\bar \lambda}(x_k) + \beta_k\delta_k +\beta_k \xi_k
\end{equation*}
with 
\[\delta_k = (\nabla_x \Psi_{\bar\lambda}(x_k) - \nabla_x \Psi_{\lambda_k}(x_k)),\quad \xi_k = (\nabla_x \Psi_{\lambda_k}(x_k) - \nabla_x (f(x_k,\bsy^k)+\lambda_k g(x_k,\bsy^k))).\]
We note that
\begin{align*}
\|-\nabla_x \Psi_{\bar\lambda}(x_k) + \delta_k\|_{\mathcal X}^2 + \E[\|\xi_k\|_{\mathcal X}^2\mid \mathcal F_{k}] &=  \E[\|\nabla_x (f(x_k,\bsy) +\lambda_k g(x_k,\bsy))\|_{\calX}^2].
\end{align*}
with $\mathcal F_k = \sigma(\bsy^m, \ m\le \rev{k-1})$.  
Denoting {$\mathds{1}_{R}^k := \mathds{1}_{\rev{\{\|x_j\|_{\mathcal X}\le R,\ j\le k\}}}$}, we can bound the increments by
\begin{align*}
\E[\mathds{1}_{R}^{k+1}\|\Delta_{k+1}\|_{\mathcal X}^2\mid \mathcal F_k] &\le \mathds{1}_R^k \Big(\|\Delta_k\|_{\mathcal X}^2 - 2\beta_k \langle \Delta_k, \nabla_x \Psi_{\bar\lambda}(x_k)-\delta_k\rangle_{\mathcal X}\\ &\quad +\beta_k^2 \|-\nabla \Psi_{\bar\lambda}(x_k) +\delta_k\|_{\mathcal X}^2 +\beta_k^2\E[\|\xi_k\|_{\mathcal X}^2\mid \mathcal F_{k}]\Big)\\
&\le \mathds{1}_R^k \Big(\|\Delta_k\|_{\mathcal X}^2 - 2\beta_k c\|\Delta_k\|_{\mathcal X}^2 + 2\beta_k\|\Delta_k\|_{\mathcal X} \|\delta_k\|_{\mathcal X} \\&\quad+\beta_k^2  \E[\|\nabla_x (f(x_k,\bsy^k) +\lambda_k g(x_k,\bsy^k))\|_{\mathcal X}^2]\Big),
\end{align*}
where we have used $\mathds{1}_{R}^{k+1}\le \mathds{1}_{R}^{k}$, the convexity assumption \eqref{eq:convexity_ass} and the Cauchy--Schwarz inequality. Using 
\eqref{eq:variance_bound} and applying Young's inequality of the form $a\cdot b \le \varepsilon/2 a^2+2/\varepsilon b^2$ with $\varepsilon >0$
leads to
\begin{align*}
\E[\mathds{1}_R^{k+1} \|\Delta_{k+1}\|_{\mathcal X}^2\mid \mathcal F_k] &\le \mathds{1}_R^k \Big(\|\Delta_k\|_{\mathcal X}^2 - 2\beta_k c\|\Delta_k\|_{\mathcal X}^2 + 2\beta_k\frac{\varepsilon}2\|\Delta_k\|_{\mathcal X}^2+\beta_k\frac1{\varepsilon} \|\delta_k\|_{\mathcal X}^2\\&\quad+\beta_k^2a_k+\beta_k^2b_k\|\Delta_k\|_{\mathcal X}^2\Big)\\
&{\le\mathds{1}_R^k \Big( \|\Delta_k\|_{\mathcal X}^2(1-2\beta_k c + 2\beta_k\frac{\varepsilon}2+\beta_k^2b_k) + \beta_k(\frac{{\kappa_1(R)}}{\varepsilon} |\lambda_k-\bar\lambda|^2+\beta_k a_k)}\big),
\end{align*}
where we have used \eqref{eq:conv_grad}.
{We set $\varepsilon = \frac{c}2$ such that we obtain with $\beta_k\le \frac{c}{2b_k}$}
{
\begin{align*}
\E[\mathds{1}_R^{k+1}\|\Delta_{k+1}\|_{\mathcal X}^2\mid \mathcal F_k] &\le (1-c\beta_k)\mathds{1}_R^k \|\Delta_k\|_{\mathcal X}^2+\beta_k(\frac{2{\kappa_1(R)}}{c} |\lambda_k-\bar\lambda|^2 + \beta_k a_k)
\end{align*}
}

We apply the discrete Gronwall's inequality and obtain
\begin{equation}\label{eq:Gronwall}
\begin{split}
\E[\mathds{1}_R^n\|\Delta_{n}\|_{\mathcal X}^2]&\le \sum_{k=1}^n\left(\prod_{j=k+1}^n (1-c\beta_j)a_k\beta_k^2\right)
+\frac{2\kappa_1}c\sum_{k=1}^n\left(\prod_{j=k+1}^n (1-c\beta_j)|\lambda_k-\bar\lambda|^2\beta_k \right)\\ &\quad+\exp\left(-c\sum_{k=1}^n\beta_k\right)\E[\|\Delta_0\|_{\mathcal X}^2].
\end{split}
\end{equation}

First note that $|\lambda_k-\bar\lambda|^2\le|\lambda_0-\bar\lambda|^2$, then it holds true that

\begin{align*}
\frac{2\kappa_1}c\sum_{k=1}^n\left(\prod_{j=k+1}^n (1-c\beta_j)|\lambda_k-\bar\lambda|^2\beta_k \right)&\le \frac{2\kappa_1}{c^2}|\lambda_0-\bar\lambda|^2 \sum_{k=1}^n\left(\prod_{j=k+1}^n (1-c\beta_j)c\beta_k \right)\\&\le \frac{2\kappa_1}{c^2}|\lambda_0-\bar\lambda|^2
\end{align*}
and with \eqref{eq:Gronwall} we obtain
\begin{equation*}
\begin{split}
\E[\mathds{1}_R^n\|\Delta_{n}\|_{\mathcal X}^2]&\le\bar{a} \sum_{k=1}^n\left(\prod_{j=k+1}^n (1-c\beta_j)\beta_k^2\right) + \frac{2\kappa_1}{c^2}\|\lambda_0-\bar\lambda\|^2+\exp\left(-c\sum_{k=1}^n\beta_k\right)\E[\|\Delta_0\|_{\mathcal X}^2].
\end{split}
\end{equation*}
Similarly, assuming that $\frac{2{\kappa_1(R)}}{c^2}|\lambda_k-\bar\lambda|^2\le D\beta_k$ gives
\begin{equation*}
\begin{split}
\E[\mathds{1}_R^n\|\Delta_{n}\|_{\mathcal X}^2]&\le(\bar{a}+D/c) \sum_{k=1}^n\left(\prod_{j=k+1}^n (1-c\beta_j)\beta_k^2\right) +\exp\left(-c\sum_{k=1}^n\beta_k\right)\E[\|\Delta_0\|_{\mathcal X}^2].
\end{split}
\end{equation*}

For the details of proving that
\[ \sum_{k=1}^n\left(\prod_{j=k+1}^n (1-c\beta_j)\beta_k^2\right)\le \left(\sum_{k=1}^\infty\beta_k^2\right) C_n,\quad \exp\left(-c\sum_{k=1}^n\beta_k\right)\le C_n \]
and the fact that $C_n$ converges to $0$ we refer the reader to the proof of Proposition~3.3 in \cite{CSTW21}.
\end{proof}

{
\begin{remark}
 {The concept of the proof of Theorem~\ref{thm:SGDconvergence} is similar to the proof of Proposition~3.3 in \cite{CSTW21}. While in \cite{CSTW21} one key challenge is to handle biased gradient approximations, the focus of the presented convergence result is to treat the incorporation of the controlled penalization through $\lambda_k$. \rev{Thus}, we need to redefine the (controlled) bias $\delta_k$ and noise $\xi_k$ of the stochastic gradient approximation in order to formulate a new discrete Gronwall-type argument.}
\end{remark}
\begin{remark}
We note that the restriction boundedness of $\|x_k\|_{\mathcal X}\le R$ is a techniqual reason for the proof and can be forced through a projection onto the ball {$B_R = \{x\in\mathcal X\mid \|x\|_{\calX}\le R\}$} by
{$$\mathcal P_R: \mathcal X\to B_R, \quad \text{with}\quad \mathcal P_R(x) = \arg\min_{x'\in B_R} \|x-x'\|_{\mathcal X}.$$
The projected stochastic gradient descent method then evolves through the update 
 $$x_{k+1} = \mathcal P_R\left(x_k - \beta_k \nabla_x [f(x_k,\bsy^k) + \lambda_k g(x_k,\bsy^k)]\right).$$
 The above proof remains the same since the projection operator is nonexpansive in the sense that
 \begin{align*}
 \|x_{k+1}-x^\ast\|_{\mathcal X} &= \|\mathcal P_R\left(x_k - \beta_k  \nabla_x [f(x_k,\bsy^k) + \lambda_k g(x_k,\bsy^k)]\right)-\mathcal P_R(x^\ast)\|_{\mathcal X}^2\\
 & \le \|x_k - \beta_k \nabla_x [f(x_k,\bsy^k) + \lambda_k g(x_k,\bsy^k)]-x^\ast\|_{\mathcal X}^2.
 \end{align*}}
 Moreoever, the presented convergence result in Theorem~\ref{thm:SGDconvergence} indicates how to control the ratio between the step sizes $(\beta_k)$ and the penalty parameters $(\lambda_k)$ based on the dependence of $\kappa(R)$ on $R>0$. 
\end{remark}
}

\subsection{Application to linear surrogate models}
In this section we verify that a surrogate, that is linear in its parameters, satisfies the assumptions of Theorem \ref{thm:SGDconvergence}. \rev{For this purpose}, we assume in this section that the surrogate is of the following form:
\begin{align}\label{eq:linearsurrogate}
u(\theta,\bsy) := B\rev{(\bsy)} \theta
\end{align}
for surrogate parameters $\theta \in \mathbb{R}^d$ and $\bsy$-dependent operator $B\rev{(\bsy)}:\mathbb{R}^d \to V$. The following two lemmas will help to prove this result:
\begin{lemma}\label{lemma:lemma01}
Let $g(x,\bsy) := \|e(u(\theta,\bsy),z)\|_{W'}^2$, with $e(u(\theta,\bsy),z) = A\rev{(\bsy)} B\rev{(\bsy)} \theta - \calB z$, where $x = (\theta,z)$, and with bounded largest eigenvalue $\sigma_{\max}(A\rev{(\bsy)}) \leq a_{\max}$ for all $\bsy \in U$. Then, there holds
\begin{align*}
\|\nabla_x g(x,\bsy)\|^2_{\calX} \leq 4(a_{\max}^2 \sigma_{\max}(B\rev{(\bsy)}(B\rev{(\bsy)})^*) + \sigma_{\max}(\calB\calB^*))\, g(x,\bsy)\,.
\end{align*}
\end{lemma}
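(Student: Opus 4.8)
The plan is to compute the gradient of $g$ explicitly and then estimate its norm in terms of $g$ itself, exploiting the fact that $g$ is the square of a norm of an affine expression in $x = (\theta,z)$. Write $r(x,\bsy) := e(u(\theta,\bsy),z) = A^\bsy B^\bsy\theta - \calB z \in W'$, so that $g(x,\bsy) = \|r(x,\bsy)\|_{W'}^2 = \langle r(x,\bsy), r(x,\bsy)\rangle_{W'}$. Since $r$ is affine in $x$, the chain rule gives the two partial gradients
\begin{align*}
\nabla_\theta g(x,\bsy) &= 2\,(A^\bsy B^\bsy)^\ast\, r(x,\bsy)\,, \qquad
\nabla_z g(x,\bsy) = -2\,\calB^\ast\, r(x,\bsy)\,,
\end{align*}
where the adjoints are taken with respect to the relevant Hilbert space inner products (using that $W'$ is a Hilbert space, as assumed). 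Hence $\|\nabla_x g(x,\bsy)\|_{\calX}^2 = \|\nabla_\theta g\|_{\R^d}^2 + \|\nabla_z g\|_{\calZ}^2 = 4\|(A^\bsy B^\bsy)^\ast r\|_{\R^d}^2 + 4\|\calB^\ast r\|_{\calZ}^2$.

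Next I would bound each of the two terms. For the first term, $\|(A^\bsy B^\bsy)^\ast r\|_{\R^d}^2 = \langle (A^\bsy B^\bsy)(A^\bsy B^\bsy)^\ast r, r\rangle_{W'} \le \sigma_{\max}\big((A^\bsy B^\bsy)(A^\bsy B^\bsy)^\ast\big)\,\|r\|_{W'}^2$. Here I would use submultiplicativity of the operator norm: $(A^\bsy B^\bsy)(A^\bsy B^\bsy)^\ast = A^\bsy B^\bsy (B^\bsy)^\ast (A^\bsy)^\ast$, whose largest singular value is bounded by $\|A^\bsy\|^2\,\sigma_{\max}(B^\bsy(B^\bsy)^\ast) \le a_{\max}^2\,\sigma_{\max}(B^\bsy(B^\bsy)^\ast)$, using the hypothesis $\sigma_{\max}(A^\bsy)\le a_{\max}$. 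For the second term, $\|\calB^\ast r\|_{\calZ}^2 = \langle \calB\calB^\ast r, r\rangle_{W'} \le \sigma_{\max}(\calB\calB^\ast)\,\|r\|_{W'}^2$. Adding the two estimates and recalling $\|r\|_{W'}^2 = g(x,\bsy)$ yields
\begin{align*}
\|\nabla_x g(x,\bsy)\|_{\calX}^2 \le 4\big(a_{\max}^2\,\sigma_{\max}(B^\bsy(B^\bsy)^\ast) + \sigma_{\max}(\calB\calB^\ast)\big)\, g(x,\bsy)\,,
\end{align*}
which is the claim.

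The only genuine subtlety — more a matter of care than of difficulty — is keeping the adjoints and the spaces straight: $A^\bsy \in \mathcal L(V,W')$, $B^\bsy\in\mathcal L(\R^d,V)$, $\calB\in\mathcal L(\calZ,W')$, so $(A^\bsy B^\bsy)^\ast$ maps $W'$ (identified with its dual via the Riesz isomorphism, since it is Hilbert) back to $\R^d$, and $\calB^\ast$ maps $W'$ to $\calZ$; I should make explicit that $\|\nabla_x g\|_{\calX}^2$ decomposes as the sum of the squared $\R^d$- and $\calZ$-norms under the product norm on $\calX = \calZ\times\R^d$, consistent with the convention fixed earlier in the paper. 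Everything else is the standard identity $\|T^\ast r\|^2 = \langle TT^\ast r, r\rangle \le \sigma_{\max}(TT^\ast)\|r\|^2$ applied twice, together with the operator-norm bound on $A^\bsy$; there is no real obstacle here.
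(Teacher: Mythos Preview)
Your proof is correct and follows essentially the same approach as the paper: compute the two partial gradients $\nabla_\theta g = 2(A^\bsy B^\bsy)^\ast r$ and $\nabla_z g = -2\calB^\ast r$, then bound each squared norm via $\|T^\ast r\|^2 \le \sigma_{\max}(TT^\ast)\|r\|^2$ together with $\sigma_{\max}(A^\bsy)\le a_{\max}$. Your write-up is in fact more explicit than the paper's about the adjoints and the product-norm decomposition on $\calX$, but the argument is identical.
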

\begin{proof} 
For all $\bsy \in U$ we have
\begin{align*}
\|\nabla_x g(x,\bsy)\|_{\calX}^2 &= \|2 (A\rev{(\bsy)} B\rev{(\bsy)})^* (A\rev{(\bsy)} B\rev{(\bsy)} \theta - \calB z)\|^2_{\mathbb{R}^d} + \|2 \calB^*(\calB z-A\rev{(\bsy)} B\rev{(\bsy)} \theta\|_{\calZ}^2\\
&\leq 4 ( a_{\max}^2 \sigma_{\max}(B\rev{(\bsy)}(B\rev{(\bsy)})^*) + \sigma_{\max}(\calB\calB^*)) \|A\rev{(\bsy)} B\rev{(\bsy)} \theta - \calB z\|_{W'}^2\\
&= 4 (a_{\max}^2 \sigma_{\max}(B\rev{(\bsy)}(B\rev{(\bsy)})^*) + \sigma_{\max}(\calB\calB^*)) g(x,\bsy)\,.
\end{align*}
\end{proof}
\begin{lemma}\label{lemma:lemma02}
Let $g(x,\bsy) := \|e(u(\theta,\bsy),z)\|_{W'}^2$, with $e(u(\theta,\bsy),z) = A\rev{(\bsy)} B\rev{(\bsy)} \theta - \calB z$, where $x = (\theta,z)$, and with bounded largest eigenvalue $\sigma_{\max}(A\rev{(\bsy)}) \leq a_{\max}$ for all $\bsy \in U$. Then, there holds for all $\bsy \in U$ that
\begin{align*}
g(x,\bsy) \leq 2 (a_{\max}^2 \sigma_{\max}((B\rev{(\bsy)})^* B\rev{(\bsy)})+\sigma_{\max}(\calB^*\calB)) \|x\|_{\calX}^2\,.
\end{align*}
\end{lemma}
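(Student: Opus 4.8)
The plan is to bound $g(x,\bsy) = \|A^\bsy B^\bsy \theta - \calB z\|_{W'}^2$ directly by expanding the square using the triangle inequality followed by the elementary estimate $(a+b)^2 \le 2a^2 + 2b^2$. This reduces the task to separately bounding $\|A^\bsy B^\bsy\theta\|_{W'}^2$ and $\|\calB z\|_{W'}^2$ in terms of $\|\theta\|_{\R^d}^2$ and $\|z\|_{\calZ}^2$, respectively, and then recombining using $\|x\|_{\calX}^2 = \|\theta\|_{\R^d}^2 + \|z\|_{\calZ}^2$.

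First I would write
\begin{align*}
g(x,\bsy) = \|A^\bsy B^\bsy \theta - \calB z\|_{W'}^2 \le 2\|A^\bsy B^\bsy \theta\|_{W'}^2 + 2\|\calB z\|_{W'}^2\,.
\end{align*}
For the first term, I would use the submultiplicativity of the operator norm together with the hypothesis $\sigma_{\max}(A^\bsy) \le a_{\max}$ to get $\|A^\bsy B^\bsy \theta\|_{W'}^2 \le a_{\max}^2 \|B^\bsy \theta\|_{V}^2$, and then $\|B^\bsy \theta\|_V^2 = \langle (B^\bsy)^* B^\bsy \theta, \theta\rangle_{\R^d} \le \sigma_{\max}((B^\bsy)^* B^\bsy)\|\theta\|_{\R^d}^2$. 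For the second term, $\|\calB z\|_{W'}^2 = \langle \calB^*\calB z, z\rangle_{\calZ} \le \sigma_{\max}(\calB^*\calB)\|z\|_{\calZ}^2$.

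Combining these bounds and then replacing each of $\|\theta\|_{\R^d}^2$ and $\|z\|_{\calZ}^2$ by the larger quantity $\|x\|_{\calX}^2 = \|\theta\|_{\R^d}^2 + \|z\|_{\calZ}^2$ yields
\begin{align*}
g(x,\bsy) \le 2 a_{\max}^2 \sigma_{\max}((B^\bsy)^* B^\bsy)\|x\|_{\calX}^2 + 2\sigma_{\max}(\calB^*\calB)\|x\|_{\calX}^2 = 2\big(a_{\max}^2 \sigma_{\max}((B^\bsy)^* B^\bsy) + \sigma_{\max}(\calB^*\calB)\big)\|x\|_{\calX}^2\,,
\end{align*}
which is the claimed inequality. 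There is no real obstacle here; the only minor point worth care is the direction of the operator-norm estimate for $A^\bsy$ (using $\sigma_{\max}(A^\bsy)$ as the bound on $\|A^\bsy\|_{\mathcal L(V,W')}$, consistent with Lemma~\ref{lemma:lemma01}) and the switch from $(B^\bsy)(B^\bsy)^*$ to $(B^\bsy)^*B^\bsy$, which is legitimate since these two operators have the same nonzero spectrum and hence the same largest singular value squared. This proof is essentially a one-line computation.
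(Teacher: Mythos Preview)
Your proof is correct and follows exactly the same approach as the paper: apply $(a+b)^2\le 2a^2+2b^2$ to $\|A^\bsy B^\bsy\theta-\calB z\|_{W'}^2$, bound each term via the operator norms, and pass to $\|x\|_{\calX}^2$. The paper's proof is just the three-line version of what you wrote.
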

\begin{proof}\rev{There holds}
\begin{align*}
g(x,\bsy) = \|A\rev{(\bsy)} B\rev{(\bsy)} \theta - \calB z\|_{W'}^2 &\leq 2( \|A\rev{(\bsy)} B\rev{(\bsy)} \theta\|_{W'}^2 + \|\calB z\|_{W'}^2) \\
&\leq 2( a_{\max}^2 \sigma_{\max}((B\rev{(\bsy)})^* B\rev{(\bsy)}) \|\theta\|_{\mathbb{R}^d}^2 + \sigma_{\max}(\calB^*\calB)) \|z\|_{\calZ}^2)\\
&\leq 2( a_{\max}^2 \sigma_{\max}((B\rev{(\bsy)})^* B\rev{(\bsy)}) +\sigma_{\max}(\calB^*\calB)) \|x\|_{\calX}^2\,.
\end{align*} 
\end{proof}

\begin{theorem}\label{thm:linearSGDconvergence}
Let $\alpha > 0$ and $\sigma_{\min} (\mathbb{E}[(B\rev{(\bsy)})^* B\rev{(\bsy)}])>0$, then a surrogate of the form \eqref{eq:linearsurrogate} satisfies the assumptions of Theorem \ref{thm:SGDconvergence}.
\end{theorem}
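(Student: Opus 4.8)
The plan is to verify, one after the other, the three hypotheses of Theorem~\ref{thm:SGDconvergence}: the strong monotonicity condition~\eqref{eq:convexity_ass}, the affine variance bound~\eqref{eq:variance_bound}, and the local bias bound~\eqref{eq:conv_grad}. The unifying observation is that for the linear surrogate~\eqref{eq:linearsurrogate} both $f(\cdot,\bsy)$ and $g(\cdot,\bsy)$ are quadratic in $x=(\theta,z)\in\calX$, so that $\Psi_{\bar\lambda}$ is a quadratic functional with constant, self-adjoint Hessian $H:=\nabla_x^2\Psi_{\bar\lambda}=\E_\bsy[\nabla_x^2 f(\cdot,\bsy)]+\tfrac{\bar\lambda}{2}\E_\bsy[\nabla_x^2 g(\cdot,\bsy)]$. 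Writing $M^\bsy:=A^\bsy B^\bsy$, a direct computation gives $\E_\bsy[\nabla_x^2 f]=\mathrm{diag}\big(\E_\bsy[(\calQ B^\bsy)^\ast\calQ B^\bsy],\,\alpha\cdot\mathrm{id}_{\calZ}\big)$, and since $e(u(\theta,\bsy),z)=M^\bsy\theta-\calB z$ is linear in $x$, the form $x\mapsto g(x,\bsy)$ is a nonnegative quadratic form with $\tfrac12\langle\nabla_x^2 g(\cdot,\bsy)\,x,x\rangle_{\calX}=\|M^\bsy\theta-\calB z\|_{W'}^2$. Because $x^\ast$ minimizes $\Psi_{\bar\lambda}$ we have $\nabla_x\Psi_{\bar\lambda}(x^\ast)=0$, hence $\nabla_x\Psi_{\bar\lambda}(x)=H(x-x^\ast)$, and~\eqref{eq:convexity_ass} reduces to the operator lower bound $H\succeq c\cdot\mathrm{id}_{\calX}$ for some $c>0$.

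For this lower bound I would first use $\|(A^\bsy)^{-1}\|_{\mathcal{L}(W',V)}\le a_{\min}^{-1}$, which gives $(A^\bsy)^\ast A^\bsy\succeq a_{\min}^2\,\mathrm{id}_V$ for every $\bsy$, whence
\[
\E_\bsy[(M^\bsy)^\ast M^\bsy]=\E_\bsy[(B^\bsy)^\ast(A^\bsy)^\ast A^\bsy B^\bsy]\succeq a_{\min}^2\,\E_\bsy[(B^\bsy)^\ast B^\bsy]\succeq m_0\,\mathrm{id}_{\R^d},\qquad m_0:=a_{\min}^2\,\sigma_{\min}\big(\E_\bsy[(B^\bsy)^\ast B^\bsy]\big)>0.
\]
Dropping nonnegative terms, for every $x=(\theta,z)\in\calX$ one has both $\langle Hx,x\rangle_{\calX}\ge\alpha\|z\|_{\calZ}^2$ and, via $\|a-b\|^2\ge\tfrac12\|a\|^2-\|b\|^2$ together with the bound above and $\|\calB z\|_{W'}\le C_1\|z\|_{\calZ}$,
\[
\langle Hx,x\rangle_{\calX}\ge\bar\lambda\,\E_\bsy[\|M^\bsy\theta-\calB z\|_{W'}^2]\ge\bar\lambda\big(\tfrac{m_0}{2}\|\theta\|_{\R^d}^2-C_1^2\|z\|_{\calZ}^2\big).
\]
Taking the convex combination of these two valid lower bounds with weight $\beta:=\tfrac{2\bar\lambda C_1^2+\alpha}{2(\alpha+\bar\lambda C_1^2)}\in(0,1)$ cancels the indefinite $\|z\|^2$-term and yields $\langle Hx,x\rangle_{\calX}\ge\tfrac{\alpha\bar\lambda m_0}{4(\alpha+\bar\lambda C_1^2)}\|\theta\|_{\R^d}^2+\tfrac{\alpha}{2}\|z\|_{\calZ}^2\ge c\,\|x\|_{\calX}^2$ with $c:=\min\{\tfrac{\alpha\bar\lambda m_0}{4(\alpha+\bar\lambda C_1^2)},\tfrac{\alpha}{2}\}>0$ (both $\alpha>0$ and $m_0>0$ are used here). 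Applying this with $x$ replaced by $x-x^\ast$ gives~\eqref{eq:convexity_ass}; the resulting strong convexity of $\Psi_{\bar\lambda}$ also underpins the existence and uniqueness of $x^\ast$ presupposed in Theorem~\ref{thm:SGDconvergence}.

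The variance bound~\eqref{eq:variance_bound} follows from $\|\nabla_x(f+\lambda_k g)\|_{\calX}^2\le 2\|\nabla_x f\|_{\calX}^2+2\lambda_k^2\|\nabla_x g\|_{\calX}^2$. The $f$-gradient is estimated directly: $\|\nabla_\theta f\|_{\R^d}^2\le\|\calQ B^\bsy\|^2\big(2\|\calQ B^\bsy\theta\|_{\frakJ}^2+2\|u_0\|_{\frakJ}^2\big)$ and $\|\nabla_z f\|_{\calZ}^2=\alpha^2\|z\|_{\calZ}^2$, which is affine in $\|x\|_{\calX}^2$. For the $g$-gradient, chaining Lemma~\ref{lemma:lemma01} and Lemma~\ref{lemma:lemma02} gives the pointwise estimate $\|\nabla_x g(x,\bsy)\|_{\calX}^2\le 8\big(a_{\max}^2\sigma_{\max}(B^\bsy(B^\bsy)^\ast)+\sigma_{\max}(\calB\calB^\ast)\big)\big(a_{\max}^2\sigma_{\max}((B^\bsy)^\ast B^\bsy)+\sigma_{\max}(\calB^\ast\calB)\big)\|x\|_{\calX}^2$, homogeneous of degree two in $\|x\|_{\calX}^2$. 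Taking $\E_\bsy$ and inserting $\|x\|_{\calX}^2\le 2\|x-x^\ast\|_{\calX}^2+2\|x^\ast\|_{\calX}^2$ yields~\eqref{eq:variance_bound} with $a_k,b_k$ that are second-degree polynomials in $\lambda_k$, hence bounded and — after replacing them by their running maxima if $(\lambda_k)$ is not itself monotone — nondecreasing, under the standing hypotheses on $(\lambda_k)$ in Theorem~\ref{thm:SGDconvergence}; one uses here that $\E_\bsy[\|B^\bsy\|^4]<\infty$, which holds for the surrogates of interest (e.g.\ a truncated polynomial surrogate on a bounded parameter domain, where $\bsy\mapsto B^\bsy$ is bounded). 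The bias bound~\eqref{eq:conv_grad} is then immediate from the same pointwise estimate and Jensen's inequality: for $\|x\|_{\calX}\le R$,
\[
\big\|\E_\bsy[(\lambda_k-\bar\lambda)\nabla_x g(x,\bsy)]\big\|_{\calX}^2\le|\lambda_k-\bar\lambda|^2\,\E_\bsy\big[\|\nabla_x g(x,\bsy)\|_{\calX}^2\big]\le\kappa_1(R)\,|\lambda_k-\bar\lambda|^2,
\]
with $\kappa_1(R):=8\,\E_\bsy\big[(a_{\max}^2\sigma_{\max}(B^\bsy(B^\bsy)^\ast)+\sigma_{\max}(\calB\calB^\ast))(a_{\max}^2\sigma_{\max}((B^\bsy)^\ast B^\bsy)+\sigma_{\max}(\calB^\ast\calB))\big]R^2$.

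The step I expect to be the real obstacle is the strong monotonicity~\eqref{eq:convexity_ass}: each contribution to $H$ is only positive \emph{semi}definite — the $f$-part controls $z$ but not $\theta$, the $g$-part controls only the residual $M^\bsy\theta-\calB z$ — so one must combine them quantitatively to recover a uniform positive lower bound in $\|\theta\|^2+\|z\|^2$. This is exactly where the convex-combination (balanced Young) argument and the hypotheses $\sigma_{\min}(\E_\bsy[(B^\bsy)^\ast B^\bsy])>0$, $\alpha>0$ enter, together with $\calB$ having bounded inverse to guarantee $m_0>0$ after averaging over $\bsy$. The remaining two hypotheses are routine consequences of Lemmas~\ref{lemma:lemma01}--\ref{lemma:lemma02} and the boundedness of $\calB$ and $\calQ$.
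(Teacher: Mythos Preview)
Your argument is correct, but your route to the strong monotonicity~\eqref{eq:convexity_ass} differs substantially from the paper's and is more involved than necessary. You assert that ``the $f$-part controls $z$ but not $\theta$'' and therefore recruit the penalty term $g$ (via $a_{\min}$, i.e.\ the uniform invertibility of $A^\bsy$) together with a convex-combination balancing to recover coercivity in $\theta$; this yields a convexity constant $c$ that depends on $\bar\lambda$. The paper instead observes that the $\theta$-block of $\E_\bsy[\nabla_x^2 f]$, namely $\E_\bsy[(\calQ B^\bsy)^\ast\calQ B^\bsy]$, is already uniformly positive definite: since $\calQ$ has bounded inverse (a standing assumption in Section~\ref{sec:mathset}), one has $\sigma_{\min}(\calQ^\ast\calQ)>0$, and combined with the hypothesis $\sigma_{\min}(\E_\bsy[(B^\bsy)^\ast B^\bsy])>0$ this gives $\langle Hx,x\rangle_{\calX}\ge \sigma_{\min}(\calQ^\ast\calQ)\,\sigma_{\min}(\E_\bsy[(B^\bsy)^\ast B^\bsy])\,\|\theta\|_{\R^d}^2+\alpha\|z\|_{\calZ}^2$ directly, after simply dropping the nonnegative $g$-contribution. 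The resulting $c=\min\{\sigma_{\min}(\calQ^\ast\calQ)\sigma_{\min}(\E_\bsy[(B^\bsy)^\ast B^\bsy]),\alpha\}$ is independent of $\bar\lambda$, which is cleaner for the step-size condition $\beta_k\le c/b_k$ in Theorem~\ref{thm:SGDconvergence}. Your approach has the compensating advantage that it does not use the invertibility of $\calQ$, relying instead on that of $A^\bsy$. (Your closing remark that ``$\calB$ having bounded inverse'' is what guarantees $m_0>0$ is a slip: $m_0=a_{\min}^2\sigma_{\min}(\E_\bsy[(B^\bsy)^\ast B^\bsy])$ uses only $a_{\min}>0$ and the theorem's hypothesis.)

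For the variance bound~\eqref{eq:variance_bound} your direct estimate via $\|x\|_{\calX}^2\le 2\|x-x^\ast\|_{\calX}^2+2\|x^\ast\|_{\calX}^2$ is perfectly sound; the paper instead subtracts $\nabla_x(f(x^\ast,\bsy)+\bar\lambda g(x^\ast,\bsy))$ and exploits the linearity $\lambda_k\nabla_x g(x,\bsy)-\bar\lambda\nabla_x g(x^\ast,\bsy)=\nabla_x g(\lambda_k x-\bar\lambda x^\ast,\bsy)$ before applying Lemmas~\ref{lemma:lemma01}--\ref{lemma:lemma02}. Your treatment of~\eqref{eq:conv_grad} coincides with the paper's, down to the form of $\kappa_1(R)$.
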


\begin{proof}
Firstly, we show that
\begin{align*}
    \langle x-x^\ast, \nabla_x \Psi_{\bar{\lambda}}(x)\rangle_{\calX} > c \|x-x^\ast\|_{\calX}^2
\end{align*}
is true for a constant $c>0$ and all $x \in \calX$. To verify this assumption, we first show that $\Psi_{\bar{\lambda}}$ is $c$-strongly convex. The $c$-strong convexity is equivalent to
\begin{align*}
\langle x-x^\ast, \nabla_x \Psi_{\bar{\lambda}}(x) - \nabla_{x} \Psi_{\bar{\lambda}}(x^\ast)\rangle_{\calX} \geq c \|x-x^\ast\|_{\calX}^2\,.
\end{align*}
Note that
\begin{align*}
    \nabla_{\theta} f(x,\bsy) = (\mathcal{Q}B\rev{(\bsy)})^* (\mathcal{Q}B\rev{(\bsy)}\theta - u_0)\quad \quad
    \nabla_{\theta} \frac{\lambda}{2} g(x,\bsy) = \lambda (A\rev{(\bsy)}B\rev{(\bsy)})^*(A\rev{(\bsy)}B\rev{(\bsy)}\theta - \calB z)\\
    \nabla_{z} f(x,\bsy) = \alpha z\quad \quad
    \nabla_{z} \frac{\lambda}{2} g(x,\bsy) = -\lambda \calB^* (A\rev{(\bsy)}B\rev{(\bsy)}\theta - \calB z)
\end{align*}
Using the linearity of the surrogate in its parameters, i.e., $u(\theta,\bsy) := B\rev{(\bsy)} \theta$, we obtain
\begin{align*}
&\langle x-x^\ast,  \nabla_x \Psi_{\bar{\lambda}}(x) - \nabla_{x^\ast} \Psi_{\bar{\lambda}}(x^\ast)\rangle_{\calX} \\
&= \langle (\theta-\theta^\ast,z-z^\ast), \mathbb{E}\big[ ( (\mathcal{Q} B\rev{(\bsy)})^* (\mathcal{Q}B\rev{(\bsy)}) + \bar{\lambda} (A\rev{(\bsy)} B\rev{(\bsy)})^* (A\rev{(\bsy)} B\rev{(\bsy)}) )(\theta-\theta^\ast) \\
&\quad -  \bar{\lambda} (A\rev{(\bsy)} B\rev{(\bsy)})^* \calB (z-z^\ast), (\alpha + \bar{\lambda} \calB^*\calB) (z-z^\ast)- \bar{\lambda} \calB^* A\rev{(\bsy)} B\rev{(\bsy)} (\theta-\theta^\ast) \big] \rangle_{\calX}\\
&= \langle \theta-\theta^\ast), \mathbb{E}[(\mathcal{Q} B\rev{(\bsy)})^* (\mathcal{Q}B\rev{(\bsy)})] (\theta-\theta^\ast)\rangle_{\mathbb{R}^d} + \bar{\lambda} \|A\rev{(\bsy)} B\rev{(\bsy)} (\theta-\theta^\ast)\|_{L^2_{\mu}(U,W')}^2\\
&\quad - \langle \theta-\theta^\ast, \bar{\lambda} (A\rev{(\bsy)} B\rev{(\bsy)})^* \calB (z-z^\ast)\rangle_{\mathbb{R}^d}\\
&\quad+ \alpha\|z-z^\ast\|_{\calZ}^2 + \bar{\lambda}\|\calB(z-z^\ast)\|_{L^2_{\mu}(U,W')}^2- \langle z-z^\ast, \bar{\lambda} \calB^* A\rev{(\bsy)} B\rev{(\bsy)} (\theta-\theta^\ast)\rangle_{\calZ}\\
&= \langle \theta-\theta^\ast, \mathbb{E}[(\mathcal{Q} B\rev{(\bsy)})^* (\mathcal{Q}B\rev{(\bsy)})] (\theta-\theta^\ast)\rangle_{\mathbb{R}^d} + \alpha \|z-z^\ast\|_{\calZ}^2 \\
&\quad + \bar{\lambda} \|\calB(z-z^\ast)- A\rev{(\bsy)} B\rev{(\bsy)} (\theta-\theta^\ast)\|_{L^2_{\mu}(U,W')}^2\\
&\geq \sigma_{\min}(\mathcal{Q}^*\mathcal{Q})\langle \theta-\theta^\ast, \mathbb{E}[(B\rev{(\bsy)})^* B\rev{(\bsy)}] (\theta-\theta^\ast)\rangle_{\mathbb{R}^d} + \alpha \|z-z^\ast\|_{\calZ}^2 
\geq c \|x-x^\ast\|_{\calX}^2 \,,
\end{align*}
where $c = \min{(\sigma_{\min}(\mathcal{Q}^*\mathcal{Q}) \sigma_{\min} (\mathbb{E}[(B\rev{(\bsy)})^* B\rev{(\bsy)}]), \alpha)}$. Note that $\sigma_{\min}(\mathcal{Q}'\mathcal{Q})>0$ since $\calQ$ is assumed to have a bounded inverse. Since $\alpha>0$ and $\sigma_{\min} (\mathbb{E}[(B\rev{(\bsy)})^* B\rev{(\bsy)}])>0$) by assumption, the assertion is true since $x^\ast$ is a stationary point of $\Psi_{\bar{\lambda}}(x)$.

Secondly, we show that
\begin{align*}
        \mathbb{E}[\| \nabla_x \big(f(x,\bsy)+ \lambda_k g(x,\bsy)\big)\|_{\calX}^2] \leq a_k + b_k \|x-x^\ast\|_{\calX}^2\,.
\end{align*}
For a stationary point $x^\ast$ of $\Psi_{\bar{\lambda}}(x)$ have that
\begin{align*}
0 = \nabla_x\big( f(x^\ast,\bsy) +\bar{\lambda} g(x^\ast,\bsy) \big)
\end{align*}
and thus
\begin{align*}
\|\nabla_x \big(f(x,\bsy)+ \lambda_k g(x,\bsy) \big)\|_{\calX}^2 &= \|\nabla_x \big(f(x,\bsy)+ \lambda_k g(x,\bsy) \big) - \nabla_x\big( f(x^\ast,\bsy) + \bar{\lambda} g(x^\ast,\bsy) \big)\|_{\calX}^2\\
&=\|\nabla_x f(x,\bsy) - \nabla_x f(x^\ast,\bsy) + \lambda_k \nabla_x g(x,\bsy) - \bar{\lambda} \nabla_x g(x^\ast,\bsy) \|_{\calX}^2\\
&\leq2\|\nabla_x (f(x,\bsy) - f(x^\ast,\bsy))\|_{\calX}^2\\
&\quad + 2\|\lambda_k \nabla_x g(x,\bsy) - \bar{\lambda} \nabla_x g(x^\ast,\bsy) \|_{\calX}^2\,.
\end{align*}
We have $\nabla_x \big(f(x,\bsy)-f(x^\ast,\bsy)\big) = \big( (\mathcal{Q}B\rev{(\bsy)})^* (\mathcal{Q}B\rev{(\bsy)}) (\theta - \theta^\ast), \alpha (z-z^\ast) \big)$ and thus
\[\mathbb{E}[2\|\nabla_x \big(f(x,\bsy)-f(x^\ast,\bsy)\big)\|_{\calX}^2] \leq \tilde{c}^2 \|x-x^\ast\|_{\calX}^2\,,\]
with $\tilde{c} =2 \mathbb{E}[\sigma_{\max}(\mathcal{Q}^*\mathcal{Q})\sigma_{\max} ((B\rev{(\bsy)})^* B\rev{(\bsy)})] + \alpha$.
Moreover, for the second summand we have
\begin{align*}
\nabla_x \big(\lambda_k g(x,\bsy) - \bar{\lambda}g(x^\ast,\bsy)\big) &= \big( 2(A\rev{(\bsy)} B\rev{(\bsy)})^* ((A\rev{(\bsy)} B\rev{(\bsy)}) (\lambda_k\theta - \bar{\lambda}\theta^\ast) \\
&\quad - \calB(\lambda_k z-\bar{\lambda}z^\ast)), 2\calB^*(\lambda_k z-\bar{\lambda}z^\ast) - A\rev{(\bsy)} B\rev{(\bsy)} (\lambda_k \theta- \bar{\lambda} \theta^\ast) \big)\\
&= \nabla_x g(\lambda_k x - \bar{\lambda} x^\ast, \bsy)\,.
\end{align*}
We can use this together with Lemma~\ref{lemma:lemma01} and Lemma~\ref{lemma:lemma02} to obtain
\begin{align*}
&\mathbb{E}[\|\nabla_x g(\lambda_k x - \bar{\lambda} x^\ast,\bsy)\|_{\calX}^2]\\ &\leq \mathbb{E}[4(a_{\max}^2 \sigma_{\max}(B\rev{(\bsy)}(B\rev{(\bsy)})^*) + \sigma_{\max}(\calB\calB^*))\, g(\lambda_k x - \bar{\lambda} x^\ast,\bsy)]\\
&\leq \mathbb{E}[4(a_{\max}^2 \sigma_{\max}(B\rev{(\bsy)}(B\rev{(\bsy)})^*) + \sigma_{\max}(\calB\calB^*))\\
&\quad\times 2 (a_{\max}^2 \sigma_{\max}((B\rev{(\bsy)})^* B\rev{(\bsy)})+\sigma_{\max}(\calB^*\calB)) \|\lambda_k x- \bar{\lambda}x^\ast\|_{\calX}^2]\\
&=\big(8 a_{\max}^4 \mathbb{E}[\sigma_{\max}((B\rev{(\bsy)})^* B\rev{(\bsy)})\sigma_{\max}(B\rev{(\bsy)}(B\rev{(\bsy)})^*)] + 8 \sigma_{\max}(\calB\calB^*) \sigma_{\max}(\calB^*\calB) \\
&\quad+ \sigma_{\max}(\calB\calB^*)(2a_{\max}^2 \mathbb{E}[\sigma_{\max}((B\rev{(\bsy)})^* B\rev{(\bsy)})]) + 4a_{\max}^2 \mathbb{E}[\sigma_{\max}(B\rev{(\bsy)}(B\rev{(\bsy)})^*)] \sigma_{\max}(\calB^*\calB)\big)\\
&\quad\times \|\lambda_k x- \bar{\lambda}x^\ast\|_{\calX}^2\\
&=\big(8 a_{\max}^4 \mathbb{E}[\sigma_{\max}((B\rev{(\bsy)})^* B\rev{(\bsy)})\sigma_{\max}(B\rev{(\bsy)}(B\rev{(\bsy)})^*)] + 8 \sigma_{\max}(\calB\calB^*) \sigma_{\max}(\calB^*\calB) \\
&\quad+ \sigma_{\max}(\calB\calB^*)(2a_{\max}^2 \mathbb{E}[\sigma_{\max}((B\rev{(\bsy)})^* B\rev{(\bsy)})]) + 4a_{\max}^2 \mathbb{E}[\sigma_{\max}(B\rev{(\bsy)}(B\rev{(\bsy)})^*)] \sigma_{\max}(\calB^*\calB)\big)\\
&\quad \times 2\big( \lambda_k^2 \|x-x^\ast\|_{\calX}^2 + (\lambda_k - \bar{\lambda})^2 \|x^\ast\|_{\calX}^2\big)\,.
\end{align*}
We conclude that
\begin{align*}
        \mathbb{E}[\| \nabla_x \big(f(x,\bsy)+ \lambda_k g(x,\bsy)\big)\|_{\calX}^2] \leq a_k + b_k \|x-x^\ast\|_{\calX}^2
    \end{align*}
holds for $a_0 \geq a_k = 2 C_{ab} (\lambda_k - \bar{\lambda})^2 \|x^*\|^{2}_{\calX}$ and $b_k = 2\tilde{c}^2 + 2C_{ab} \lambda_k^2$, where
\begin{align*}
C_{ab} &= \big(8 a_{\max}^4 \mathbb{E}[\sigma_{\max}((B\rev{(\bsy)})^* B\rev{(\bsy)})\sigma_{\max}(B\rev{(\bsy)}(B\rev{(\bsy)})^*)] + 8 \sigma_{\max}(\calB\calB^*) \sigma_{\max}(\calB^*\calB) \\
&\quad+ \sigma_{\max}(\calB\calB^*)(2a_{\max}^2 \mathbb{E}[\sigma_{\max}((B\rev{(\bsy)})^* B\rev{(\bsy)})]) + 4a_{\max}^2 \mathbb{E}[\sigma_{\max}(B\rev{(\bsy)}(B\rev{(\bsy)})^*)] \sigma_{\max}(\calB^*\calB)\big).
\end{align*}
Thirdly, {given some $R>0$} we show that
\begin{align*}
\sup_{x \in \calX, \|x\|_{\calX}\leq R} \|\mathbb{E}[(\lambda_k - \bar{\lambda}) \nabla_x g(x,\bsy)]\|^2 \leq \kappa_1(R) |\lambda_k - \bar{\lambda}|^2\,,
\end{align*}
for some {$\kappa_1(R) > 0$}. We observe that 
\begin{align*}
\|\mathbb{E}[(\lambda_k - \bar{\lambda}) \nabla_x g(x,\bsy)]\|_{\calX}^2 &\leq |\lambda_k - \bar{\lambda}|^2 \|\mathbb{E}[\nabla_x g(x,\bsy)]\|_{\calX}^2\\
&\leq |\lambda_k - \bar{\lambda}|^2 \mathbb{E}[\|\nabla_x g(x,\bsy)\|_{\calX}^2]\\
&\leq |\lambda_k - \bar{\lambda}|^2 \mathbb{E}[4(a_{\max}^2 \sigma_{\max}(B\rev{(\bsy)}(B\rev{(\bsy)})^*) + \sigma_{\max}(\calB\calB^*))\, g(x,\bsy)]\\
&\leq |\lambda_k - \bar{\lambda}|^2 \mathbb{E}[4(a_{\max}^2 \sigma_{\max}(B\rev{(\bsy)}(B\rev{(\bsy)})^*) + \sigma_{\max}(\calB\calB^*))\, \\ 
&\quad 2 (a_{\max}^2 \sigma_{\max}((B\rev{(\bsy)})^* B\rev{(\bsy)})+\sigma_{\max}(\calB^*\calB)) \|x\|_{\calX}^2]\,.
\end{align*}
This proves the claim for $\kappa_1(R) = \mathbb{E}[4(a_{\max}^2 \sigma_{\max}(B\rev{(\bsy)}(B\rev{(\bsy)})^*) + \sigma_{\max}(\calB\calB^*)) \linebreak 2(a_{\max}^2 \sigma_{\max}((B\rev{(\bsy)})^* B\rev{(\bsy)})+\sigma_{\max}(\calB^*\calB)) R^2]$.
\end{proof}

\begin{remark}
 We note that the assumption $\sigma_{\min} (\mathbb{E}[(B\rev{(\bsy)})^* B\rev{(\bsy)}])>0$ in Theorem~\ref{thm:linearSGDconvergence} can be dropped if a quadratic regularization on the surrogate parameters $\theta$ is employed. 
\end{remark}

\rev{
\subsection{Extension to non-convex objective functions}
In the following, we present an extending result for the convergence of Algorithm~\ref{alg:pSGD} in a non-convex setting. The presented convergence result is based on the super-martingale convergence theorem of Robbins-Siegmund \cite{RS1971}. The only assumption which we make is that the expectation functions $x\mapsto F(x):=\mathbb E_{\bsy}[f(x,\bsy)]$ and $x\mapsto G(x):=\mathbb E_{\bsy}[g(x,\bsy)]$ are smooth, i.e., we assume that the gradient $\nabla_x F$ is $L_1$-Lipschitz continuous and the gradient $\nabla_x G$ is $L_2$-Lipschitz continuous for $L_1,L_2>0$. A simple computation then gives that the expectation function $\Psi_{\lambda_k}$ is $L_1+\lambda_k^2 L_2$-smooth. Our convergence result reads as follows.
\begin{theorem}\label{thm:SGD_nonconvex}
    We assume that $g$ satisfies
    $\mathbb E_{\bsy}[g(x,\bsy)] \le a$
    for some $a>0$ and for all $x\in \mathcal X$, and there exist $L_1,L_2>0$, such that $\mathbb E_{\bsy}[f(x,\bsy)]$ is $L_1$-smooth and $\mathbb E_{\bsy}[g(x,\bsy)]$ is $L_2$-smooth. Moreover, suppose that $|\lambda_k-\bar\lambda|^2$ is monotonically decreasing, $\beta_k<\frac{1}{L_1+\lambda_k^2 L_2}$ and $(\beta_k)$ satisfies the Robbins-Monro condition \eqref{eq:RM}. Finally, we assume that there exist $b_1,b_2>0$ such that
    \[\mathbb E_{\bsy}[\|\nabla_x[f(x,\bsy)+\lambda_k g(x_k,\bsy)]-\mathbb E_{\bsy}\left[\nabla_x[f(x,\bsy)+\lambda_k g(x_k,\bsy)]\right]\|^2] \le b_1 + b_2 \lambda_k^2 \]
    Then it holds true that
    \[\liminf_{k\to\infty} \|\nabla_x \Psi_{\lambda_k}(x_k)\|^2 = 0\]
    almost surely. 
\end{theorem}
\begin{proof}
    Using that $x\mapsto \Psi_{\lambda_k}:=\mathbb E_{\bsy}[f(x,\bsy)+\lambda_k g(x,\bsy) ]$ is $L_1+\lambda_k^2 L_2$-smooth, we have the following iterative descent property \cite[Theorem~2.1.5]{N2018},
    \begin{align*}
        \Psi_{\lambda_k}(x_{k+1}) &\le \Psi_{\lambda_k}(x_k) + \langle \nabla_x\Psi_{\lambda_k}(x_k), x_{k+1}-x_k\rangle_{\mathcal X} + \frac{L_1+\lambda_k^2 L_2}{2}\|x_{k+1}-x_k\|_{\mathcal X}^2\\
        &= \Psi_{\lambda_{k-1}}(x_k) + [\Psi_{\lambda_k}-\Psi_{\lambda_{k-1}}](x_k) - \beta_k\langle \nabla_x\Psi_{\lambda_k}(x_k), \nabla_x[f(x,\bsy)+\lambda_k g(x_k,\bsy)]\rangle_{\mathcal X}\\
        &\quad +\beta_k^2 \frac{L_1+L_2\lambda_k^2}{2} \|\nabla_x[f(x,\bsy)+\lambda_k g(x_k,\bsy)]\|_{\mathcal X}^2\,. 
    \end{align*}
    We observe that
    \[\Psi_{\lambda_k}(x_k) - \Psi_{\lambda_{k-1}}(x_{k}) = (\lambda_k-\lambda_{k-1}) \mathbb E_{\bsy}[g(x_k,\bsy)]\, , \]
    with the convention $\lambda_{-1} = \lambda_0$, and conditioned on $\mathcal F_{k}=\sigma(\bsy^m, m\le k-1)$ we obtain 
    \[\mathbb E_\bsy[\langle \nabla_x\Psi_{\lambda_k}(x_k), \nabla_x[f(x,\bsy)+\lambda_k g(x_k,\bsy)]\rangle_{\mathcal X}] = \|\nabla_x\Psi_{\lambda_k}(x_k)\|_{\mathcal X}^2 \]
    as well as
    \begin{align*} 
    &\mathbb E_\bsy[\|\nabla_x[f(x,\bsy)+\lambda_k g(x_k,\bsy)]\|_{\mathcal X}^2 \mid \mathcal F_k]\\ &= \mathbb E[\|\nabla_x[f(x,\bsy)+\lambda_k g(x_k,\bsy)]-\mathbb E_{\bsy}[\nabla_x[f(x,\bsy)+\lambda_k g(x_k,\bsy)]]\|^2] + \|\nabla_x\Psi_{\lambda_k}(x_k)\|_{\mathcal X}^2\\
    &\le b_1 + b_2 \lambda_k^2 + \|\nabla_x\Psi_{\lambda_k}(x_k)\|_{\mathcal X}^2 \,. 
    \end{align*}
    Now, we define $\Delta_k = \Psi_{\lambda_{k-1}}(x_k)$ such that 
    \begin{align*} 
    \mathbb E_{\bsy}[\Delta_{k+1}\mid \mathcal F_k] &\le \Delta_k + (\lambda_k-\lambda_{k-1}) \mathbb E_{\bsy}[g(x_k,\bsy)] - \beta_k\left(1-\frac{L_1+\lambda_k^2 L_2}{2}\beta_k\right) \|\nabla_x\Psi_{\lambda_k}(x_k)\|_{\mathcal X}^2\\
    &\quad + \beta_k^2 \frac{L_1+\lambda_k^2 L_2}{2} (b_1+b_2\lambda_k^2)\\
    &\le \Delta_k + (\lambda_k-\lambda_{k-1}) \mathbb E_{\bsy}[g(x_k,\bsy)] - \frac{\beta_k}2 \|\nabla_x\Psi_{\lambda_k}(x_k)\|_{\mathcal X}^2\\
    &\quad + \beta_k^2 \frac{L_1+\lambda_k^2 L_2}{2} (b_1+b_2\lambda_k^2)\,.
    \end{align*}
    Note that we have the following properties
    \begin{itemize}
        \item $1-\frac{L_1+\lambda_k^2 L_2}{2}\beta_k\ge\frac12>0$ since $\beta_k < \frac{1}{L_1+\lambda_k^2 L_2}$,
        \item $0\le \sum_{k=0}^\infty (\lambda_k-\lambda_{k-1}) \mathbb E_{\bsy}[g(x_k,\bsy)] <\infty$ almost surely, since $\lambda_k-\lambda_{k-1}\ge0$, $\lambda_k\le \bar \lambda$ and $0\le g(x_k) \le a$.
        \item $0\le \sum_{k=0}^\infty \beta_k^2\frac{L_1+\lambda_k^2 L_2}{2} (b_1+b_2\lambda_k^2)<\infty$ almost surely, since $\lambda_k\le \bar \lambda$ and $\sum_{k=0}^\infty \beta_k^2<\infty$.
    \end{itemize}
    Finally, we are ready to apply 
    \cite[Theorem 1]{RS1971}, which gives
    \[ \sum_{k=0}^\infty \frac{\beta_k}2 \|\nabla_x\Psi_{\lambda_k}(x_k)\|_{\mathcal X}^2 <\infty \quad \text{almost surely}\,,\]
    and with the condition $\sum_{k=0}^\infty\beta_k = \infty$ implies the assertion 
    \[\liminf_{k\to\infty}\|\nabla_x\Psi_{\lambda_k}(x_k)\|_{\mathcal X}^2 = 0\quad \text{almost surely}\,.\]
\end{proof}
\begin{remark}
    In addition, using the triangle inequality, we obtain with Theorem~\ref{thm:SGD_nonconvex} that 
    \[\liminf_{k\to\infty}\|\nabla_x\Psi_{\bar \lambda}(x_k)\|_{\mathcal X}^2\le 2\liminf_{k\to\infty} \|\nabla_x\Psi_{\lambda_k}(x_k)\|_{\mathcal X}^2 + \lim_{k\to\infty} 2(\bar\lambda-\lambda_k) \|\nabla_x \mathbb E_{\bsy}[g(x_k,\bsy)]\|_{\mathcal X}^2 = 0\]
    almost surely, provided that $\|\nabla_x \mathbb E_{\bsy}[g(x_k,\bsy)]\|_{\mathcal X}^2$ is bounded.
\end{remark}
\begin{remark}
    We note that the above convergence result is a direct consequence of the Theorem of Robbins-Siegmund \cite[Theorem~1]{RS1971} and does not describe a rate of convergence. Indeed, the convergence of stochastic gradient methods in general can be very slow and is highly dependent on tuning of the step sizes. To obtain a rate of convergence one typically needs to include further assumptions on the loss function. For example, one can verify that $C_n$ in Theorem~\ref{thm:SGDconvergence} is of order $\frac{1}{n}$ which is a standard convergence behavior under smoothness and strong convexity \cite{Bottou2018}. More recently, it has been observed that similar convergence results can be shown under gradient dominance properties, sometimes also called Polyak-\L ojasiewicz condition, which can specifically be verified for over-parametrized (wide) neural networks when minimizing the non-linear least squares misfit \cite{LIU202285}. We expect, that in this scenario one can similarly verify a rate of convergence. However, such an analysis is beyond the scope of this paper.
\end{remark}
}
\section{Numerical Experiments}\label{sec:numerics}
The model problem in our numerical experiments is the Poisson equation, \eqref{eq:1.1_old}, \eqref{eq:1.2}~--~\eqref{eq:1.3}, on the unit square $D = (0,1)^2$. We use piecewise-linear finite elements on a uniform triangular mesh with meshwidth $h = 1/8$. The random input field is modelled as
 \[   a(\bsy,x) = a_0(x) + \sum_{j=1}^s y_j \frac{1}{(\pi^2 (k_j^2+\ell_j^2) + \tau^2)^\vartheta} \sin(\pi x_1 k_j) \sin(\pi x_2 \ell_j)\,,\]
where $a_0(x) = 10^{-5} + \|\sum_{j = 1}^s \frac{1}{(\pi^2 (k_j^2+\ell_j^2) + \tau^2)^\vartheta} \sin(\pi x_1 k_j) \sin(\pi x_2 \ell_j)\|_{L^\infty(D)}$, $s=4$, $\vartheta = 0.25$, $\tau = 3$,  $(k_j,\ell_j)_{j\in \{1,\ldots,s\}} \in \{1,\ldots,s\}^2$ and $y_j$ are i.i.d.~uniformly in $[-1,1]$ for all $j = 1,\ldots,s$. The variance of the resulting PDE solution $u(\bsy)$, with right-hand side $z(x) = x_2^2 - x_1^2$, is illustrated in Figure \ref{fig:var1} and Figure \ref{fig:var2}. The mean and standard deviation is estimated using $10^5$ Monte Carlo samples. 
        
\begin{figure}[htb]
    \begin{minipage}[t]{0.47\textwidth}
        \centering
        \includegraphics[width=\textwidth]{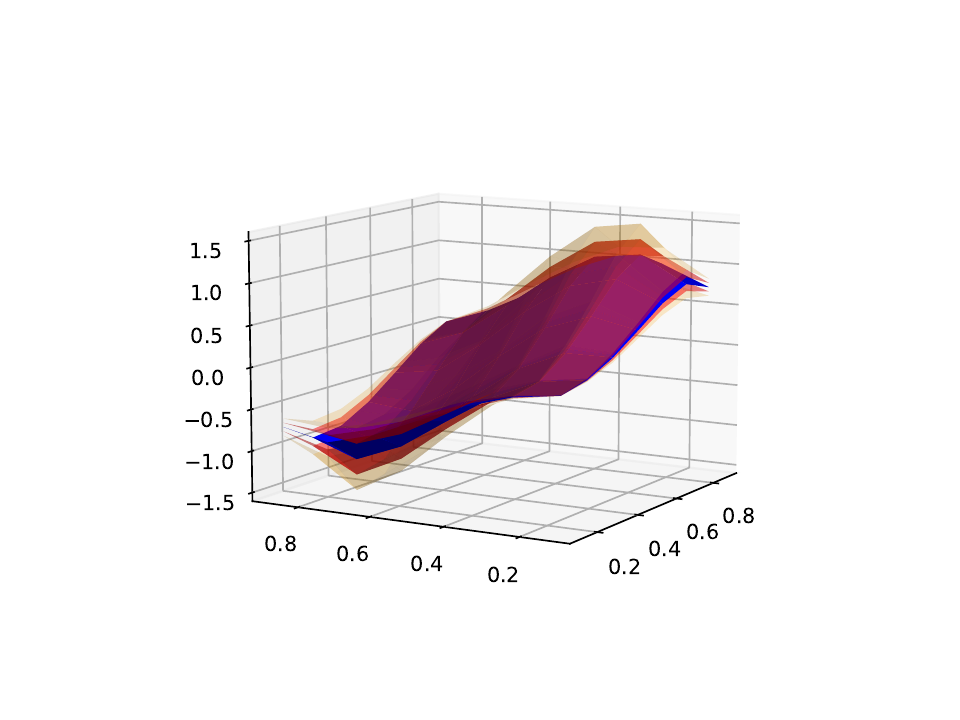}
        \caption{Mean of the states (blue) plus/minus 1 (red) and 2 (orange) standard deviations. Here only the values in the interior of the domain $D$ are plotted for better illustration.}
        \label{fig:var1}
    \end{minipage}
    \hfill
    \begin{minipage}[t]{0.47\textwidth} 
        \centering
        \includegraphics[width=\textwidth]{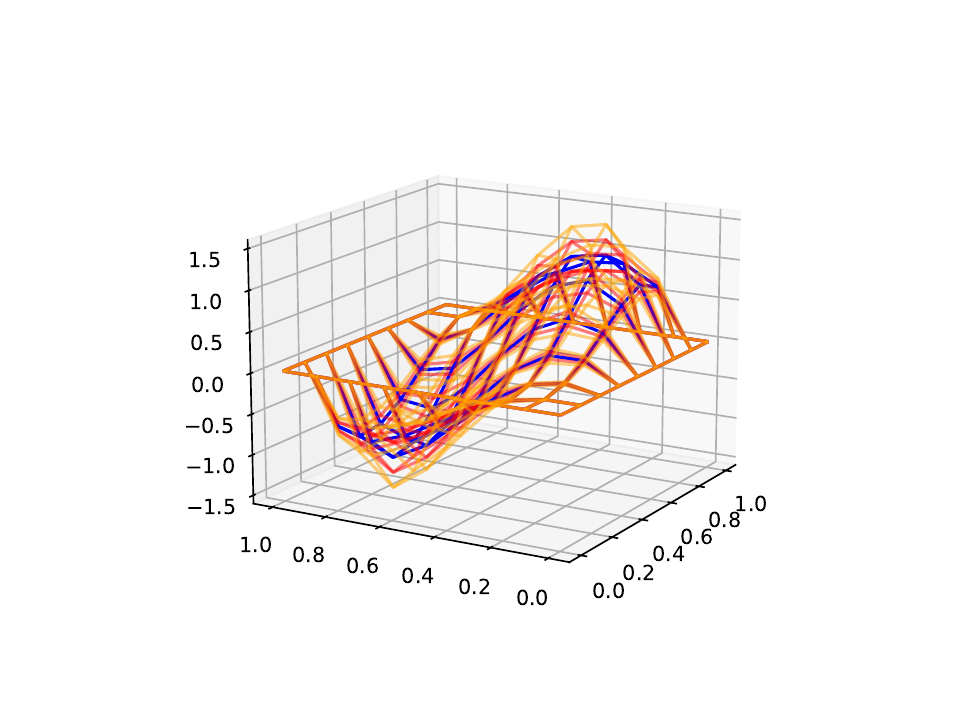}
        \caption{Mean of the states (blue) plus/minus 1 (red) and 2 (orange) standard deviations. The variance is zero on the boundary of the domain $\partial D$.}
        \label{fig:var2}
    \end{minipage}
\end{figure}
In the following numerical experiments we solve the pERM problem
\begin{align*}
    \min_{(z,\theta)}\, \frac{1}{N} \sum_{i=1}^N \|u(\theta,\bsy^{\rev{(i)}}) - u_0\|^2 + \frac{\alpha}{2} \|z\|^2 + \lambda_k \frac{1}{N} \sum_{i=1}^N \|A(\bsy^{\rev{(i)}}) u(\theta,\bsy^{\rev{(i)}}) - z\|^2\,.
\end{align*}
where $\alpha = 0.5$ and the target state $u_0$ is given as $u_0 = \Delta^{-1} 100(x_2^2-x_1^2)$. 

\subsection{\rev{Experiment 1}} 
\rev{In this section we shall assess the convergence results derived in Lemma~\ref{lem:pRM_CRM}, Lemma~\ref{lem:pERM_pRM}, and Theorem~\ref{thm:consistency} for a linear surrogate, namely a Legendre polynomial expansion of degree 2. We solve the optimization problems using the \texttt{scipy} implementation of the L-BFGS method with initial guess $z_0 = (0,\ldots,0) \in \mathbb{R}^n$ and $\theta_0 = (1,\ldots,1) \in \mathbb{R}^d$.}

First, we verify Lemma~\ref{lem:pERM_pRM}. To this end, we set $\lambda_k = 1$ for all $k$ and solve the pERM problem multiple times for increasing sample size $N = 2^k$, with $k = 1,\ldots,13$. The reference solution $(z_{\text{ref}}, \theta_{\text{ref}})$ is computed by using $N_{\text{ref}} = 2^{14}$ Monte Carlo samples. The observed rate in Figure \ref{fig:increasing_samplesize} aligns nicely with the predicted rate in Lemma \ref{lem:pERM_pRM}.
\begin{figure}[t]
\centering
    \begin{minipage}[t]{0.32\linewidth}
    \centering
        \includegraphics[width=\textwidth]{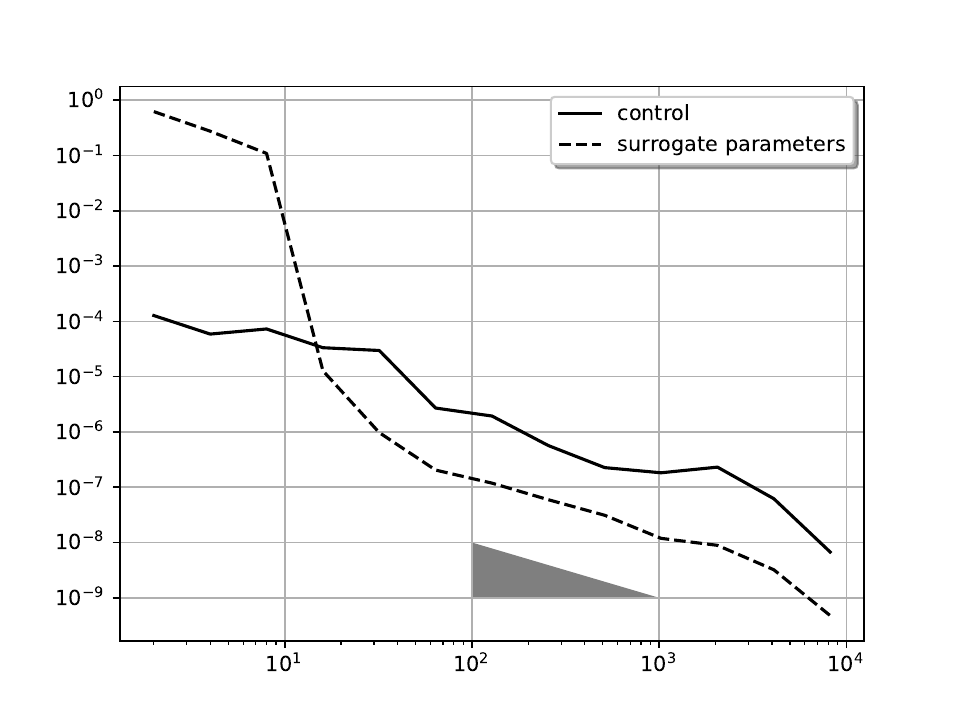}
        \caption{Convergence for increasing sample size. Squared error of the optimal controls $\|z - z_{\text{ref}}\|^2$ and squared error of the optimal surrogate parameters $\|\theta - \theta_{\text{ref}}\|^2$.}
        \label{fig:increasing_samplesize}
    \end{minipage}
    \hfill
    \begin{minipage}[t]{0.32\linewidth}
    \centering
    \includegraphics[width=\textwidth]{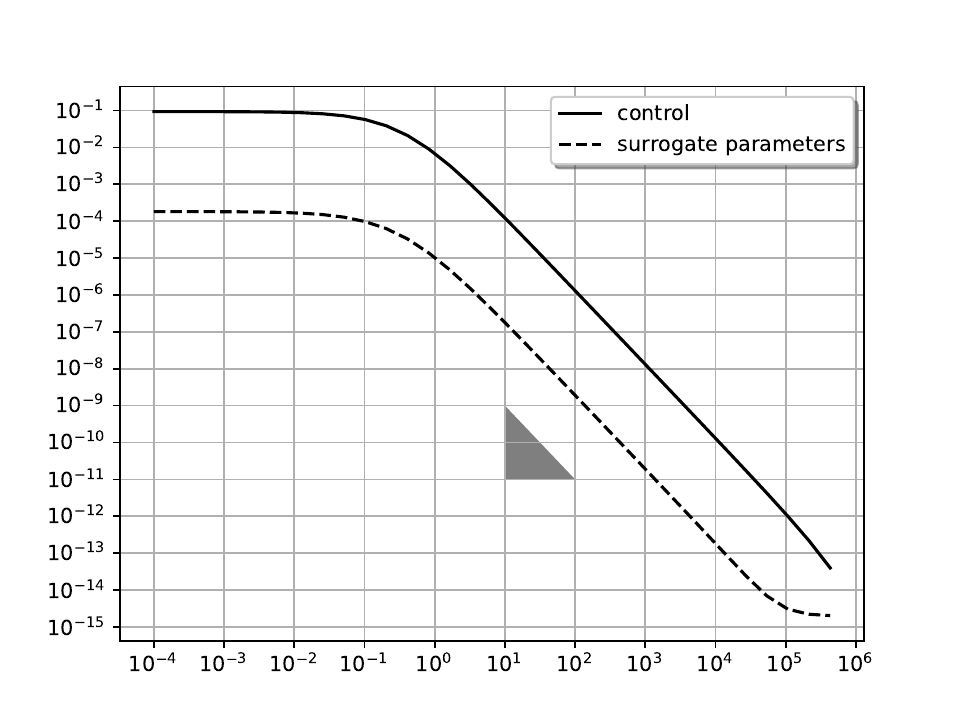}
        \caption{Convergence for increasing penalty parameter $\lambda_k$. Squared error of the optimal controls $\|z - z_{\text{ref}}\|^2$ and squared error of the optimal surrogate parameters $\|\theta - \theta_{\text{ref}}\|^2$.}
        \label{fig:increasing_penaltypara}
    \end{minipage}
    \hfill
    \begin{minipage}[t]{0.32\linewidth}
    \centering
        \includegraphics[width=\textwidth]{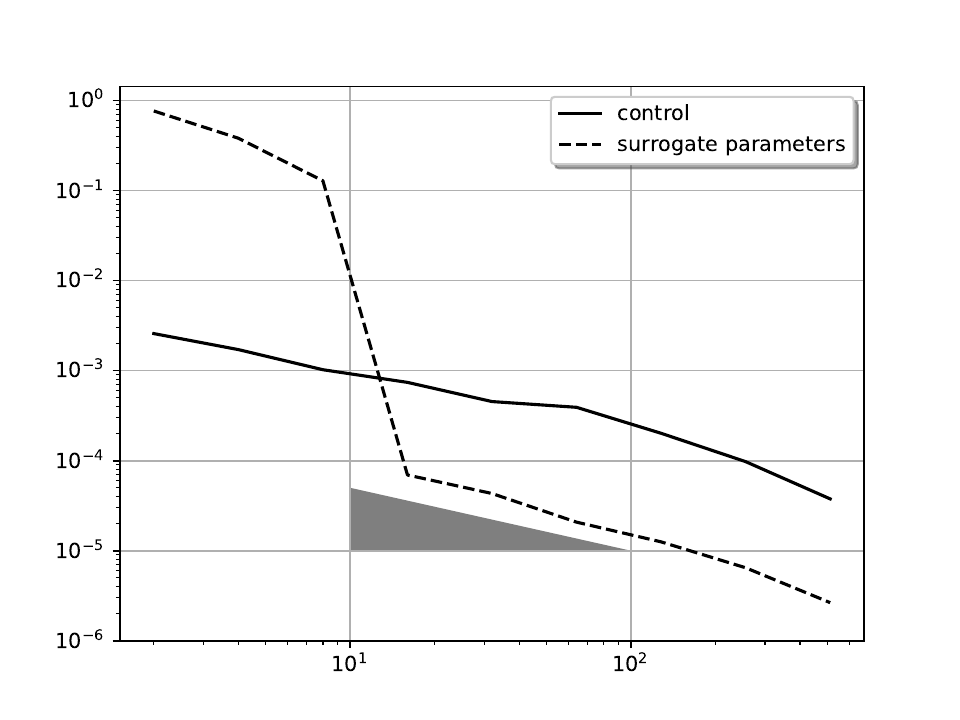}
        \caption{Convergence for simultaneous increasing of sample size and penalty parameter $\lambda_k = N^{\frac{1}{4}}$. Squared error of the optimal controls $\|z - z_{\text{ref}}\|^2$ and squared error of the optimal surrogate parameters $\|\theta - \theta_{\text{ref}}\|^2$.}
        \label{fig:increasing_simult}
    \end{minipage}
\end{figure}

Next, we verify Lemma~\ref{lem:pRM_CRM}. We fix the sample size $N = 100$ and solve the pERM problem for increasing penalty parameter $\lambda_k$. In Figure \ref{fig:increasing_penaltypara} we observe the rate predicted by Lemma \ref{lem:pRM_CRM}. Here the reference solution is computed for $\lambda_k \approx 1.7 \cdot 10^6$. For numerical stability we regularize the problem in this experiment by adding the term $10^{-5}\|\theta\|^2$ to the objective function of the pERM problem.

\rev{Finally, f}or the simultaneous increasing of the penalty parameter and the sample size, in Figure~\ref{fig:increasing_simult}, we solve the pERM for $N = 2^{k}$, with $k = 1,\ldots,9$ and balance $\lambda_k = N^{\frac{1}{4}}$. The reference solution is computed using $N=2^{11}$ points. The observed errors reflect the predicted rate in Theorem~\ref{thm:consistency}.

\subsection{\rev{Experiment 2}}

\rev{In this experiment we solve the pERM problem using Algorithm~\ref{alg:pSGD} with adaptively increasing penalty parameter $\lambda_k$ in accordance with Theorem~\ref{thm:SGDconvergence}.}

\begin{figure}[t]
\centering
    \begin{minipage}[t]{0.47\linewidth}
    \centering
        \includegraphics[width=\textwidth]{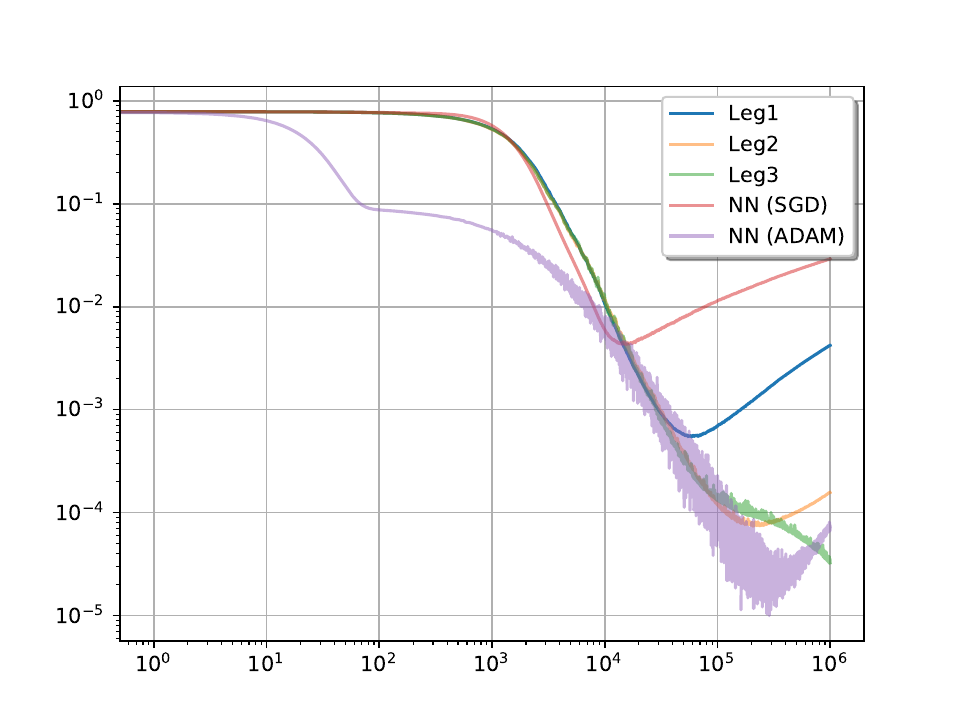}
        \caption{Mean squared error of control computed with surrogate and L-BFGS reference solution of the control $\|z-z_{\text{ref}}\|^2$}\label{fig:error_control}
    \end{minipage}
    \hfill
    \begin{minipage}[t]{0.47\linewidth}
    \centering
        \includegraphics[width=\textwidth]{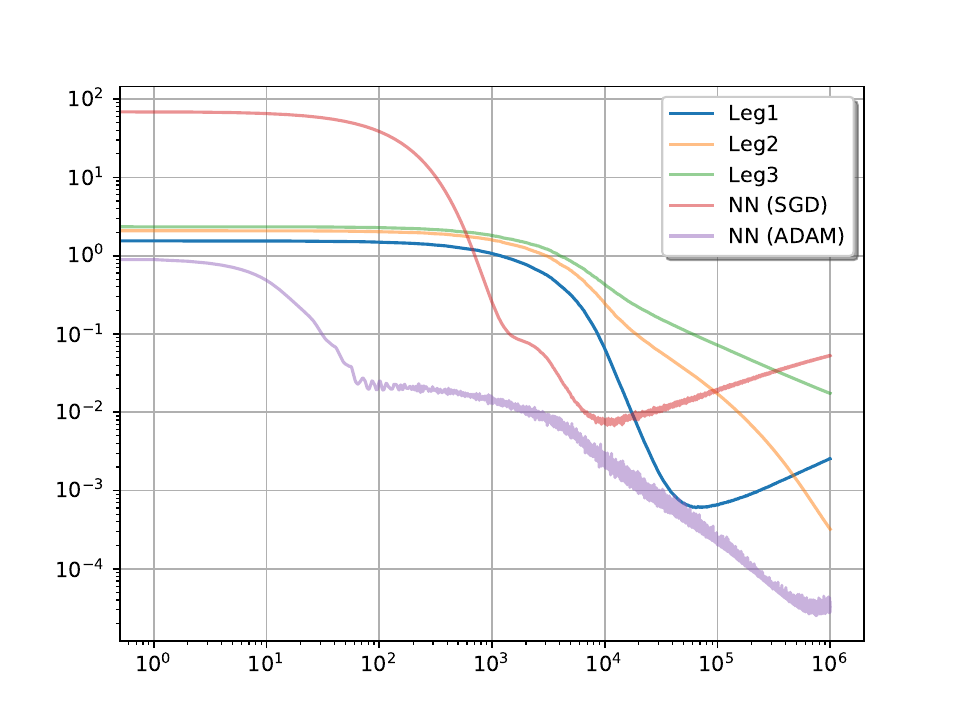}
        \caption{Mean squared error of surrogate and L-BFGS reference solution of the state $\mathbb{E}[\|u\rev{(\bsy)}_{\theta} - u_{\text{ref}}\rev{(\bsy)}\|^2]$}
        \label{fig:error_state}
    \end{minipage}
\end{figure}

\begin{figure}[t]
\centering
    \begin{minipage}[t]{0.47\linewidth}
    \centering
        \includegraphics[width=\textwidth]{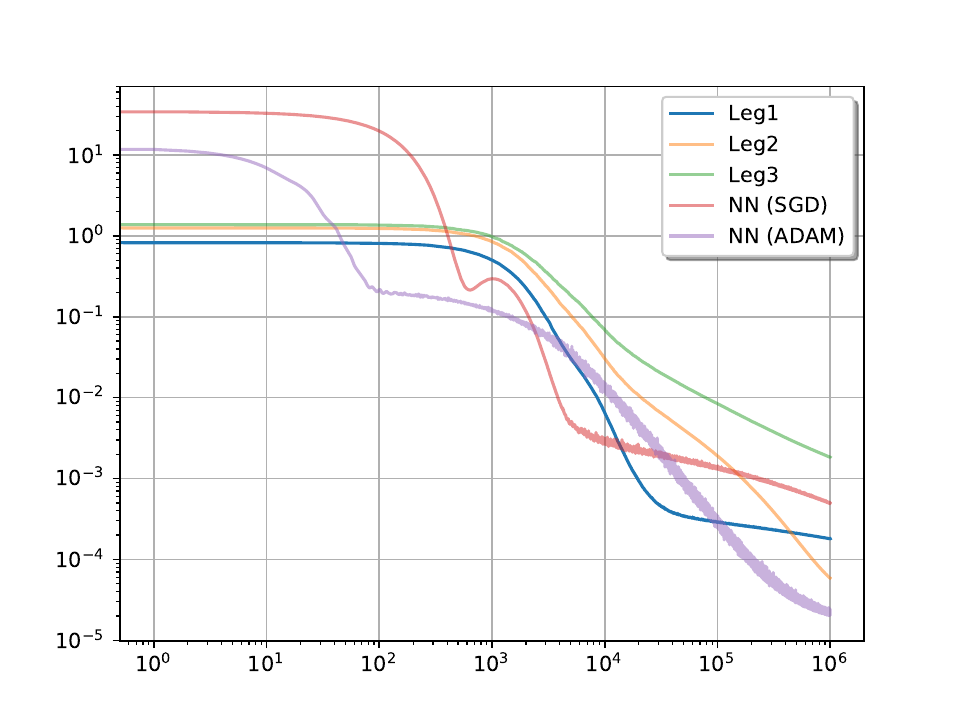}
        \caption{Mean squared residual $\mathbb{E}[\|A\rev{(\bsy)} u\rev{(\bsy)}_{\theta} - z\|^2]$}
        \label{fig:model_error}
    \end{minipage}
    \hfill
    \begin{minipage}[t]{0.47\linewidth}
    \centering
        \includegraphics[width=\textwidth]{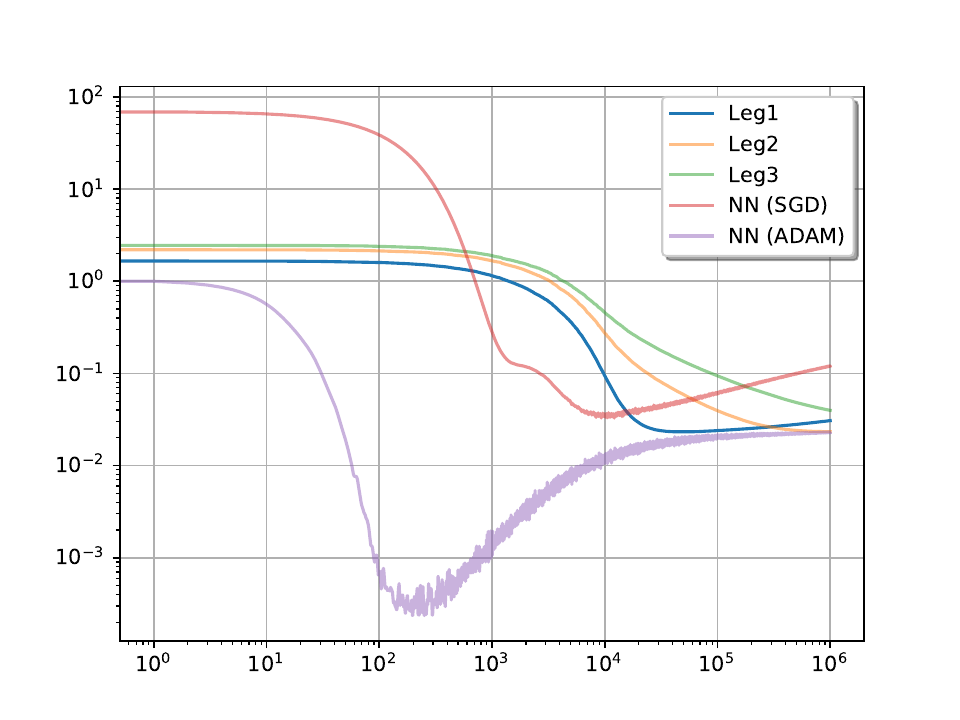}
        \caption{Mean squared error $\mathbb{E}[\|u\rev{(\bsy)}_\theta-u_0\|^2]$ of the surrogate $u_\theta\rev{(\bsy)}$ and the target state $u_0$}
        \label{fig:tracking_error}
    \end{minipage}
\end{figure}

\rev{
We compared two different surrogate models: the orthogonal Legendre polynomial expansion and a neural network. We emphasize that the convergence results are not verified for the neural network.}

The experiment is performed for Legendre polynomial expansions, which are linear in its parameters, of order $1,2$ and $3$. More precisely, recalling from \eqref{eq:Legendre}, we consider $u(\theta,\bsy) = \sum_{\bsnu \in \mathbb{N}_0^s}^{|\bsnu|=\ell} \theta_\bsnu P_\bsnu(\bsy)$ of degree $\ell = 1,2,3$, with $P_\bsnu = \prod_{k=1}^s P_{\nu_k}(y_k)$, where~$P_{\nu_k}$ is the k-th order Legendre polynomial. The number of parameters $\theta$ increases rapidly as the order of the polynomials increases. In fact, $\theta \in \mathbb{R}^{n_{\text{FEM}}\times n_{\text{Pol}}}$, where $n_{\text{FEM}}$ denotes the number of degree of freedoms of the finite element method and $n_{\text{Pol}}$ denotes the number of polynomials given by $n_{\text{Pol}} = \frac{(\ell + s)!}{\ell! s!}$, i.e., $n_{\text{Pol}} = 15$ if $\ell= 2$ and $n_{\text{Pol}} = 35$ if $\ell = 3$ for $s=4$. Consequently, the Legendre polynomial expansions have $245$, $735$, and $1715$ parameters to be determined during the optimization. 

\rev{As a nonlinear surrogate we have chosen to test a neural network, as defined in \eqref{eq:neuralnetwork} with $L = 5$ layers and of size $N_1 = 4$, $N_2,\ldots,N_4 = 9$ and $N_5 = 49$}
, i.e., with a total number of $715$ parameters. The activation function we are using is the sigmoid function $\sigma(x) := 1/(1+\exp{(-x)})$.
\rev{To solve the pERM problem with the neural network surrogate we use both, the penalized stochastic gradient descent with adaptive penalty parameter (see, Algorithm~\ref{alg:pSGD}), as well as the ADAM algorithm with increasing penalty parameter. We have chosen a smooth activation function. For results on optimization in the nonsmooth setting, we refer to \cite{christof2023identification,Constantin,doi:10.1137/18M1231559,dong2022descent,dong2022firstorder}}

For each of the surrogates considered, the error in the control decreases, see Figure \ref{fig:error_control}. Clearly, this error is bounded from below by the approximation properties of the surrogates. In Figure \ref{fig:error_state} we observe that the errors of the states decrease as well. The reference solution $(z_{\text{ref}}, u\rev{(\bsy)}_{\text{ref}})$ of the cRM problem is computed using the L-BFGS as implemented in \texttt{scipy}.
%
\rev{Furthermore, we observe in Figure~\ref{fig:model_error}} that the model error \rev{decreases}.

\section{Conclusions}
\label{sec:conclusions}
We proposed a flexible framework for the incorporation of machine learning approximations for optimization under uncertainty. The surrogate is trained only for the optimal control \rev{instead of for all possible control values}, i.e., no expensive offline training \rev{for the entire control space} is needed. \rev{This can significantly reduce the computational complexity of the underlying problem and gives hope that smaller surrogates can be used in the presented approach}. The numerical experiments show promising results and application to more complex optimization problems under uncertainty will be subject to future work.

We analyzed the stochastic gradient method for the optimization of the one-shot system. In more complex situations, gradients might not be available due to the use of black-box solvers or computational limits. We will explore the generalization of our work to derivative-free optimization techniques, in particular to Kalman based methods, in order to ensure applicability also in this setting.

\rev{Moreover, we have observed in our numerical experiments that the implementation of ADAM can lead to significant improvement of the performance compared to standard SGD (with increasing penalty parameter). It would be very interesting to analyse this behavior from a theoretical perspective. In particular, a natural starting point is to analyse the penalized SGD algorithm with additional incorporation of momentum such as Polyak's heavy ball method~\cite{POLYAK19641} or Nesterov's acceleration method~\cite{N2018}.}

The penalty approach, which can be motivated by the Bayesian ansatz \cite{GSW2020}, provides an algorithmic framework ensuring the feasibility of the control \rev{with respect} to the forward model. The Bayesian viewpoint will guide further work to adaptively learn the penalty function and incorporate model error.

\section*{Acknowledgments}
PG is grateful to the DFG RTG1953 ``Statistical Modeling of Complex Systems and Processes'' for partial funding of this research. CS acknowledges support from MATH+ project EF1-19: Machine Learning Enhanced Filtering Methods for Inverse Problems and EF1-20: Uncertainty Quantification and Design of Experiment for Data-Driven Control, funded by the Deutsche Forschungsgemeinschaft (DFG, German Research Foundation) under Germany's Excellence Strategy - The Berlin Mathematics Research Center MATH+ (EXC-2046/1, project ID: 390685689). The authors acknowledge support by the state of Baden-W\"urttemberg through bwHPC.


\bibliographystyle{siam}
\bibliography{references.bib}


\end{document}